\newtheorem{theorem}{Theorem}[section]
\newtheorem{lemma}[theorem]{Lemma}
\newtheorem{prop}[theorem]{Proposition}
\newtheorem{coro}[theorem]{Corollary}
\theoremstyle{definition}
\newtheorem{definition}[theorem]{Definition}
\newtheorem{remark}[theorem]{Remark}
\newtheorem*{remark*}{Remark}
 \theoremstyle{plain}
\newtheorem*{namedthm}{\namedthmname}
\newcounter{namedthm}
\newenvironment{named}[1]
  {\def\namedthmname{#1}%
   \refstepcounter{namedthm}%
   \namedthm\def\@currentlabel{#1}}
  {\endnamedthm}
\newcommand{\CPP}{\mathbb{C}\mathbb{P}}
\newcommand{\Ec}{\mathcal{E}}
\newcommand{\setdef}{\ \vert \ }
\newcommand{\vep}{\varepsilon}
\newcommand{\psh}{{\rm PSH}}
\newcommand{\capacity}{{\rm Cap}}
\newcommand{\capphi}{{\rm Cap}_{\phi}}
\newcommand{\cappsi}{{\rm Cap}_{\psi}}
\newcommand{\capchi}{{\rm Cap}_{\chi}}
\newcommand{\Vol}{{\rm Vol}}
\newcommand{\ddbar}{\partial\bar\partial}
\newcommand{\AM}{{\rm I}}
\newcommand{\MA}{{\rm MA}}
\newcommand{\MV}{{\rm MV}}
\newcommand{\PSH}{{\rm PSH}}
\newcommand{\Amp}{{\rm Amp}}
\newcommand{\id}{\mathbbm{1}}
\newcommand{\Supp}{{\rm Supp}}
\begin{document}

\title{Log-concavity of volume and complex Monge-Amp\`ere equations with prescribed singularity}

\author{Tam\'as Darvas, Eleonora Di Nezza, Chinh H. Lu}
\date{}

\maketitle

\begin{abstract} Let $(X,\omega)$ be a compact K\"ahler manifold. We prove the existence and uniqueness of solutions to complex Monge-Amp\`ere equations with prescribed singularity type. Compared to previous work, the assumption of small unbounded locus is dropped, and we work with general model type singularities. We state and prove our theorems in the context of big cohomology classes, however our results are new in the K\"ahler case as well.
As an application we confirm a conjecture by Boucksom-Eyssidieux-Guedj-Zeriahi concerning log-concavity of the volume of closed positive $(1,1)$-currents. Finally, we show that log-concavity of the volume in complex geometry corresponds to the Brunn-Minkowski inequality in convex geometry, pointing out a dictionary between our relative pluripotential theory and $P$-relative convex geometry. Applications related to stability and existence of csck metrics are treated elsewhere.
\end{abstract}

\section{Introduction}

Suppose $(X,\omega)$ is a compact connected K\"ahler manifold of complex dimension $n$. In this work 
we show that it is possible to solve complex Monge-Amp\`ere equations with prescribed singularity type, without any technical conditions. 

To put our results in historical context, we start with Yau's theorem \cite{Ya78}: given $f>0$ smooth with $\int_X f \omega^n = \int_X \omega^n$, it is possible to find a unique $u \in C^\infty(X,\mathbb{R})$ such that $\omega_u := \omega + i\ddbar u >0$ and
\begin{equation}\label{eq: Yau_eq}
\omega_u^n = f\omega^n \ \textup{ on } \ X.
\end{equation}
Geometrically, the above equation simply means that it is possible to prescribe the volume form of K\"ahler metrics within a K\"ahler class. 

Given additional geometric data, one is tempted to ask similar questions. To describe a motivating example, consider a finite number of complex submanifolds $D_j \subset X$. We ask: is it possible to find a solution to \eqref{eq: Yau_eq} on $X \setminus \cup_j D_j$, with the potential $u$  having prescribed asymptotics near the submanifolds $D_j$? Roughly speaking, when the asymptotics are governed by the $\log(\cdot)$ (or the $\log(-\log(\cdot))$) of the distances from the $D_j$, then the solution $u$ is said to have analytic singularity (or Poincar\'e type singularity) along the $D_j$ (see Section 2). Under various restrictive conditions, such problems were studied by Yau \cite[Section 9]{Ya78}, Tian-Yau \cite{TY87,TY90}, Phong-Sturm \cite{PS14}, Auvray \cite{Au17}, the two of us in \cite{DL17}, and many others. 

To deal with such questions collectively and in an efficient manner (allowing arbitrary asymptotics near $\cup_j D_j$) it is advantageous to consider a potential $\phi \in \textup{PSH}(X,\omega)$ that ``models" the singularity behavior near $\cup_j D_j$: we  simply ask that $u-\phi$ stays uniformly bounded on $X \setminus \cup_j D_j$, i.e. that $u$ and $\phi$ have the same \emph{singularity type}. 

This setup allows to disregard the potentially complicated geometry of the submanifolds and their intersections, and it also leads to a number of natural questions: is it possible to consider infinitely many divisors $D_j$? For what $\phi$ can we find a solution $u$, with the same singularity type as $\phi$? To what extent are such solutions unique? It turns out that all the information regarding well posedness of these problems is carried by the potential $\phi$, and the specific geometry of the $D_j$ can be ignored.

More concretely, in \ref{thm: main thm A}(i) below, we fully characterize the potentials $\phi$ for which a solution $u$ to \eqref{eq: Yau_eq} can be found, with the same singularity type as $\phi$. This theorem, along with its analog for Aubin-Yau type equations,  generalizes simultaneously the main result of Ko{\l}odziej \cite{Ko98} and the appropriate results Boucksom-Eyssidieux-Guedj-Zeriahi \cite{BEGZ10}. As applications, we fully resolve the log-concavity conjecture regarding volumes of positive currents from \cite{BEGZ10}, and we point out the close connection between our theorems and the Brunn-Minkowski theory of convex bodies. 

\medskip\noindent\textbf{Complex Monge-Amp\`ere equations with prescribed singularity.} With the above informal picture in mind, we lay down the precise details of our problem.
Suppose $\theta$ is a smooth $(1,1)$-form representing a big cohomology class on $X$. Given $u,v \in \textup{PSH}(X,\theta)$,  we say that 
\begin{itemize}\vspace{-0.1cm}
	\item $u$ is more singular than $v$, i.e., $u \preceq v$, if there exists $C\in \mathbb{R}$  such that $u\leq v+C$;\vspace{-0.1cm}
	\item $u$ has the same singularity as $v$, i.e., $u \simeq v$, if $u\preceq v$ and $v\preceq u$. \vspace{-0.1cm}
\end{itemize}
The classes $[u]$ of this latter equivalence relation are called \emph{singularity types}. 

Fixing $\phi \in \textup{PSH}(X,\theta)$ and $f \in L^p(X,\omega^n), \ f \geq 0, \ p >1$, we seek a solution to the following problem:
\begin{equation}\label{eq: CMAE_with_sing}
\begin{cases}
u \in \textup{PSH}(X,\theta),\\
\theta_u^n = f \omega^n,\\
[u]=[\phi],
\end{cases}
\end{equation}
where $\theta_u^n$ is understood in the sense of pluripotential theory, as the non-pluripolar Monge-Amp\`ere measure of $u$, introduced in \cite[Section 1.2]{BEGZ10}.  
 When $\theta$ is K\"ahler and $\phi =0$, \eqref{eq: CMAE_with_sing} reduces to Ko{\l}odziej's $L^{\infty}$-estimate \cite{Ko98} in the context of the Calabi-Yau theorem \cite{Ya78}.

By \cite[Theorem 1.2]{WN17} the correspondence $[u] \to \int_X \theta_u^n$ is well defined and monotone with respect to the (partial) ordering $\preceq$, and in \cite[Theorem 1.1]{DDL2} this was generalized to mixed non-pluripolar products. In particular, the normalization condition $\int_X \theta_\phi^n=\int_X f\omega^n>0$ becomes necessary in the above problem. 

As pointed out in \cite[Theorem 4.34]{DDL2}, it is only possible to solve the above equation for all $f \in L^p, \ p >1$ if we assume that $\phi$ is a potential with \emph{model type singularity}, that is $[\phi]= [P_\theta[\phi]]$ (i.e. $\phi - P_\theta[\phi]$ is bounded on $X$), 
where 
$$P_\theta[\phi] = \left (\sup\{\psi \in 
\textup{PSH}(X,\theta), \ \psi \leq 0 \textup{ and } \psi \preceq \phi \} \right)^*.$$
For an elaborate discussion on the relationship between the envelope $P_\theta$  and singularity types we refer to Section \ref{sect: preliminaries}. 

We now state our first main result, that provides unique solutions under these necessary conditions, not only to the  above problem, but also to a related one relevant to K\"ahler-Einstein geometry:
\begin{named}{Theorem A}\label{thm: main thm A}  Suppose that $[\phi]$ is a model type singularity. Let  $f \in L^p(\omega^n), p > 1$ be such that $f \geq 0$  and $\int_X f \omega^n=\int_X \theta_{\phi}^n > 0$. Then the following hold:\\
(i) There exists  $u\in \textup{PSH}(X,\theta)$, unique up to a constant, such that $[u]=[\phi]$ and 
\begin{equation}\label{eq: main_thm_eq1}
\theta_u^n=f \omega^n.
\end{equation}
(ii) For any $\lambda >0$  there exists a unique $v\in \textup{PSH}(X,\theta)$,  such that $[u]=[\phi]$ and 
\begin{equation}\label{eq: main_thm_eq2}
\theta_u^n=e^{\lambda u}f \omega^n.
\end{equation}
\end{named}

One of the main ingredients of this result is the relative Ko{\l}odziej estimate (Theorem \ref{thm: uniform estimate}). Recently, the two of us used this same result to approximate $L^1$ finite energy geodesic rays with $L^{\infty}$ geodesic rays, while assuring convergence of the radial K-energy (see \cite[Theorem 1.5]{DL18}), proving  (the uniform version of) Donaldson's geodesic stability conjecture for $L^\infty$ rays.

\begin{remark*} As mentioned earlier, by \cite[Theorem 4.34]{DDL2}, asking for $[\phi]$ to be a model type singularity is not only \emph{sufficient}, but also a \emph{necessary} (!) condition for the solvability of \eqref{eq: main_thm_eq1} for all $f \in L^p(X,\omega^n), \ p >1$. Consequently, model type singularities are truly natural, and appear in many different contexts of complex differential geometry, as described in \cite[Remark 1.6]{DDL2}.

Also, the assumption of non-vanshing mass $\int_X \theta_{\phi}^n > 0$ is important for well-posedness. Indeed, while in the case $\int_X \theta_{\phi}^n =\int_X f \omega^n = 0$ the potential $\phi$ trivially solves both \eqref{eq: main_thm_eq1} and \eqref{eq: main_thm_eq2}, this solution is not unique(!) in the singularity class $[\phi]$ (see Remark \ref{rem: non-unique_zero_mass}).
\end{remark*}

\begin{remark*} In Theorem \ref{thm: non pluripolar existence} and Theorem \ref{thm: AY} we actually show that $|u-\phi|$ is under control, in terms of only $p,\omega,\int_X \theta_\phi^n, \|f\|_p$, and $\lambda$, thus the above result generalizes the main result of Ko{\l}odziej \cite{Ko98}. Given that $\phi$ might have dense unbounded locus in $X$, the same is true for $u$, hence the regularity of $u$ can not be improved in this context, making our results optimal. 
\end{remark*}

 The above result extends \cite[Theorem 1.4]{DDL2}, where we assumed that $\phi$ has additionally small unbounded locus. 
In order to apply the variational techniques of \cite{BBGZ13} this technical condition was necessary. Here we take a completely different approach and we point out that generic model type singularities do not have small unbounded locus (see the example above \cite[Lemma 4.1]{DDL2}). 

As one of the novelties of the paper, we will construct solutions using  \emph{super-solution techniques}, and this will allow to overcome the difficulties with using integration by parts in the variational approach. In fact, our results will allow to obtain a version of  \ref{thm: main thm A} where $f\omega^n$ is replaced with a non-pluripolar measure $\mu$ satisfying the normalization condition $\int_X \theta_\phi^n = \int_X d\mu >0$. In this case however solutions will not have the same singularity type as $\phi$, they will come from the slightly bigger relative full mass class $\mathcal E(X,\theta,\phi)$ introduced in \cite{DDL2}.

\medskip\noindent\textbf{Log-concavity of the volume.} To give an application to \ref{thm: main thm A},  in our second main result we confirm the log-concavity conjecture of Boucksom-Eyssidieux-Guedj-Zeriahi. Let us recall some related terminology. Let $T_1,T_2,\ldots,T_n$ be closed positive $(1,1)$-currents on $X$. Naturally, there exist smooth closed $(1,1)$-forms $\theta^1,\ldots,\theta^n$ and potentials  $u_j \in \textup{PSH}(X,\theta^j)$ such that $T_j = \theta^j + i\ddbar u_j$. The product $\langle T_1 \wedge \ldots \wedge T_n \rangle$ is defined as follows:
$$\langle T_1 \wedge \ldots \wedge T_n \rangle := \theta^1_{u_1} \wedge \ldots \wedge \theta^n_{u_n}.$$

Related to the full mass of this product we establish the following result, conjectured in \cite[Conjecture 1.23]{BEGZ10}, informally referred to as the ``log-concavity conjecture'' of total masses:

\begin{named}{Theorem B} 	\label{thm: main thm B} 
	Let $T_1,...,T_n$ be closed positive $(1,1)$-currents on $X$. Then 
	\begin{equation}\label{Hodge type}
	\int_X \langle T_1 \wedge \cdots \wedge  T_n \rangle \geq \left (\int_X \langle T_1^n \rangle  \right )^{\frac{1}{n}} \cdots \left (\int_X \langle T_n^n \rangle  \right )^{\frac{1}{n}}. 
	\end{equation}
	In particular, $T\mapsto \left ( \int_X \langle T^n \rangle\right )^{\frac{1}{n}}$ is concave on the set of closed positive $(1,1)$ currents, and so is the map $T\mapsto \log \left ( \int_X \langle T^n \rangle\right )$. 
\end{named}

If equality holds in \eqref{Hodge type}, it does not necessarily mean that the singularity types of the $T_j$ are the same up to scaling (as one would perhaps expect). Still, it remains an interesting question to characterize the conditions under which equality is attained.

The correspondence $T \to \int_X \langle T^n \rangle$ vastly generalizes the process of associating volume to a line bundle $L \to X$ (see \cite[Section 1]{Bo02}), an essential concept in complex algebraic geometry (see \cite[Section 2.2]{La04}). From this point of view  \eqref{Hodge type} is a Hodge index-type inequality. For an introduction to Hodge index type inequalities in algebraic geometry, we refer to \cite[Section 1.6]{La04}.

In connection with the above theorem, a number of partial results have been obtained in the past. When $T_1,\ldots,T_n$ are smooth this result is due to Demailly \cite{De93}. When $X$ is projective it was proved in \cite[Corollary E]{BFJ09} that the map $\alpha\rightarrow  (\alpha^n)^{1/n}$ is strictly concave on the big and nef cone of the real N\'eron-Severi space $N_1(X)$.
As pointed out in \cite[Page 223]{BEGZ10}, in case the potentials of $T_1, \ldots, T_n$ have analytic singularity type, after passing to a log-resolution, the above result reduces to the nef version of an inequality of  Khovanski-Teissier (see \cite[Proposition 5.2]{De93}). 
 In addition to this, in \cite[Corollary 2.15]{BEGZ10} the above result  is proved in the special case when $\{T_1\}=\ldots =\{T_n\}$ and $T_1,\ldots,T_n$ have full mass. In \cite[Section 5.2]{DDL1} we generalized this to the case when $\{T_1\},\ldots,\{T_n\}$ are possibly different, but  $T_1,\ldots,T_n$ have full mass. In \cite[Theorem 1.8]{DDL2} we obtained the version of the conjecture when the potentials of $T_1,\ldots,T_n$ have small unbounded locus. Here we finally obtain the general form of the conjecture. What is more, following our method of proof, it is clear that generalizations of \ref{thm: main thm A} to k-Hessian type equations will pave the way to other types of Khovanskii-Teissier type inequalities (see \cite[Section 1.6]{La04}) in the context of big cohomology classes.

\medskip\noindent\textbf{Relation with convex geometry.} Using the tools developed in the present paper, in the presence of polycircular symmetry, it is possible to describe a \emph{dictionary} between $\phi$-relative pluripotential theory and $P$-relative convex geometry. This latter subject has been explored recently in \cite{BB13, Ba17, BBL18}, motivated by the study of K\"ahler-Ricci solitons, Bergman measures and Fekete points. 

As we point out, our analysis recovers many known results in convex geometry, while also strengthening the connection between the theory of the real and complex Monge-Amp\`ere measures:
\smallskip

$\bullet$ In the presence of polycircular symmetry, there is a one-to-one correspondence between model type singularities $[\phi]$ and convex bodies $P \subset \mathbb{R}^n$ (see Theorem \ref{thm: model_sing_convex_body}).

$\bullet$  In this context the log-concavity inequality (Theorem B) corresponds to the celebrated Brunn-Minkowski inequality and its variants for convex bodies (see Theorem \ref{thm: BM}).

$\bullet$ Theorem A and its generalization (Theorem \ref{thm: non pluripolar existence}) recovers a theorem of Berman--Berndtsson for the real  Monge-Amp\`ere equation (see Theorem \ref{resolution real MA}). Also, we positively answer a question of Berman--Berndtsson \cite[Remark 2.23]{BB13}, giving a precise result about the asymptotics of solutions to the real  Monge-Amp\`ere equation (see Remark \ref{rem:BB13 2.23}).

\smallskip
Moreover, our analysis suggests that the k-Hessian analog of \eqref{Hodge type} (alluded to at  the end of the previous paragraph) corresponds to the mixed volume inequalities of Alexandrov--Fenchel. Due to space constraints we don't explore such avenues further, but we are optimistic that many more results can be obtained via the parallel study of the complex and convex theories.

\medskip\noindent\textbf{Organization of the paper.} In Section \ref{sect: preliminaries}
we recall the terminology and results of \cite{DDL2} concerning relative pluripotential theory. In Section \ref{sect: relative MA capacity} we develop (relative) Monge-Amp\`ere capacity, giving a significant generalization of Ko{\l}odziej's $L^{\infty}$ estimate (see  Theorem \ref{thm: uniform estimate}). Using this last result, \ref{thm: main thm A} is proved in Section \ref{sect: MA equation} and Section \ref{sect: AY equation} (Theorems \ref{thm: non pluripolar existence} and \ref{thm: AY}). In Section \ref{sect: log concave} we settle the log-concavity conjecture, and in Section \ref{sect: real MA} we explore the connection with $P$-relative convex geometry.

\medskip\noindent\textbf{Acknowledgements.}  The first named author has been partially supported by BSF grant 2016173 and NSF grant DMS-1610202. The second and third named authors are partially supported by the French ANR project GRACK. We thank Hugues Auvray and L\'aszl\'o Lempert for useful discussions. We thank Norman Levenberg and Turgay Bayraktar for stimulating discussions related to $P$-pluripotential theory and convex bodies. 

\section{Preliminaries} \label{sect: preliminaries}

In this section we recall known results from (relative) finite energy pluripotential theory, developed in \cite{DDL1,DDL2} (especially \cite[Sections 1-3]{DDL2}), and establish some novel preliminary theorems.

\subsection{Non-pluripolar complex Monge-Amp\`ere measures and relative pluripotential theory}

Let $(X,\omega)$ be a compact K\"ahler manifold of dimension $n$ and fix $\theta$ a smooth closed $(1,1)$-form whose cohomology class is big. Our notation is taken from \cite{DDL2}.

A function $u: X \rightarrow \mathbb{R}\cup \{-\infty\}$ is called quasi-plurisubharmonic if locally $u= \rho + \varphi$, where $\rho$ is smooth and $\varphi$ is a plurisubharmonic function. We say that $u$ is $\theta$-plurisubharmonic ($\theta$-psh for short) if it is quasi-plurisubharmonic and $\theta_u:=\theta+i\ddbar u \geq 0$ in the weak sense of currents on $X$. We let $\PSH(X,\theta)$ denote the space of all $\theta$-psh functions on $X$. The class $\{\theta\}$ is {\it big} if there exists $\psi\in \psh(X,\theta)$ such that $\theta +i\ddbar \psi\geq \vep \omega$ for some $\vep>0$.  
 
A potential $u \in \textup{PSH}(X,\theta)$ has {\it analytic singularities} if  it can be written locally as  $u(z) = c \log \sum_{j=1}^k |f_j(z)|^2 + h(z),$ where $c>0$, the $f_j's$ are holomorphic functions and $h$ is  smooth. By the fundamental approximation theorem of Demailly \cite{Dem92}, if $\{\theta\}$ is big  there are plenty of $\theta$-psh functions with analytic singularities.  Following \cite{Bo04,BEGZ10} the ample locus of $\{\theta\}$ (denoted by $\Amp(\theta)$) is defined to be the set of all $x\in X$ such that there exists a $\theta$-psh function on $X$ with analytic singularities, smooth in a neighborhood of $x$. It follows from \cite[Theorem 3.17 (ii)]{Bo04}  that there exists a $\theta$-psh function $\psi$ with analytic singularities such that $\Amp(\theta)$ is the open set on which $\psi$ is smooth and $\psi=-\infty$ on $X\setminus \Amp(\theta)$.   

When $\theta$ is non-K\"ahler, elements of $\textup{PSH}(X,\theta)$ can be quite singular, and we distinguish the potential with the smallest singularity type in the following manner:
$$V_\theta := \sup \{u \in \textup{PSH}(X,\theta) \textup{ such that } u \leq 0\}.$$

A function $u\in \PSH(X,\theta)$ is said to have minimal singularities if it has the same singularity type as $V_{\theta}$, i.e., $[u]=[V_\theta]$.  By the analysis above it follows that $V_{\theta}$ is locally bounded in the Zariski open set $\Amp(\theta)$.

Given $\theta^1,...,\theta^n$ closed smooth $(1,1)$-forms representing big cohomology classes and $\varphi_j \in \textup{PSH}(X,\theta^j)$, $j=1,...n$, following the construction of Bedford-Taylor \cite{BT76,BT82,BT87} in the local setting, it has been shown in \cite{BEGZ10} that the sequence of positive measures
\begin{equation}\label{eq: k_approx_measure}
{\mathbbm 1}_{\bigcap_j\{\varphi_j>V_{\theta^j}-k\}}\theta^{1}_{\max(\varphi_1, V_{\theta^1}-k)}\wedge \ldots\wedge \theta^n_{\max(\varphi_n, V_{\theta^n}-k)}
\end{equation}
has total mass (uniformly) bounded from above and is non-decreasing in $k \in \Bbb R$, hence converges weakly to the so called \emph{non-pluripolar product} 
\[
\theta^1_{\varphi_1 } \wedge\ldots\wedge\theta^n_{\varphi_n }.
\]
The resulting positive measure does not charge pluripolar sets. In the particular case when $\varphi_1=\varphi_2=\ldots=\varphi_n=\varphi$ and $\theta^1=...=\theta^n=\theta$ we will call $\theta_{\varphi}^n$ the non-pluripolar measure of $\varphi$, which generalizes the usual notion of volume form in case $\theta_{\varphi}$ is a smooth K\"ahler form. As a consequence of Bedford-Taylor theory it can be seen that the measures in \eqref{eq: k_approx_measure} all have total mass less than $\int_X \theta_{V_\theta}^n$, in particular, after letting $k \to \infty$ we notice that $\int_X \theta_{\varphi}^n \leq \int_X \theta_{V_\theta}^n$. In fact it was recently proved in \cite[Theorem 1.2]{WN17} that for any $u,v \in \textup{PSH}(X,\theta)$  the following monotonocity property holds for the total masses:
$$v \preceq u \Longrightarrow \int_X \theta_v^n \leq \int_X \theta_u^n.$$
This result, together with the generalization \cite[Theorem 1.1]{DDL2}, opened the door to the development of relative finite energy pluripotential theory, as introduced in \cite[Sections 2-3]{DDL2}, whose terminology we now recall. 

\paragraph*{Relative finite energy class $\Ec(X,\theta,\phi)$.}
Fixing $\phi \in \textup{PSH}(X,\theta)$ one can consider only $\theta$-psh functions that are more singular than $\phi$. Such potentials form the set $\textup{PSH}(X,\theta,\phi)$. Since the map $[u] \to \int_X \theta_u^n$ is monotone increasing, but not strictly increasing, it is natural to consider the set of $\phi$-relative \emph{full mass potentials}:
$$\mathcal E(X,\theta,\phi) := \left\{u \in \textup{PSH}(X,\theta,\phi) \ \textup{ such that } \int_X \theta_u^n = \int_X \theta_\phi^n\right \}.$$
Naturally, when $v \in \textup{PSH}(X,\theta,\phi)$ we only have $\int_X \theta^n_v \leq \int_X \theta^n_\phi$. As pointed out in \cite{DDL2}, when studying the potential theory of the above space, the following well known envelope constructions will be of great help:
$$ P_\theta(\psi,\chi), \ P_\theta[\psi](\chi),  \ P_\theta[\psi] \in \textup{PSH}(X,\theta).$$ 
These were introduced by Ross and Witt Nystr\"om \cite{RWN14} in their construction of geodesic rays, using slightly different notation. Given $\psi,\chi \in \textup{PSH}(X,\theta)$, the starting point is the  ``rooftop envelope'' $P_\theta(\psi,\chi):=(\sup\{v \in \textup{PSH}(X,\theta), \ v \leq \min(\psi,\chi) \})^*$. This allows us to introduce
$$P_\theta[\psi](\chi) := \Big(\lim_{C \to +\infty}P_\theta(\psi+C,\chi)\Big)^*.$$

It is easy to see that $P_\theta[\psi](\chi)$ only depends on the singularity type of $\psi$. When $\chi = V_\theta$, we will simply write $P_\theta[\psi]:=P_\theta[\psi](V_\theta)$ and refer to this potential as the \emph{envelope of the singularity type} $[\psi]$.

Using such envelopes we conveniently characterized membership in $\mathcal E(X,\theta,\phi)$ in case $\phi = P[\phi]$ and $\int_X \theta_\phi^n >0$ (see \cite[Theorem 1.3]{DDL2}):

 \begin{theorem}\label{thm: DDL2_E_char} Suppose $\phi \in \textup{PSH}(X,\theta), \ \phi = P[\phi]$ and $\int_X \theta_\phi^n >0$. Then $u \in \mathcal E(X,\theta,\phi)$ if and only if $u \in \textup{PSH}(X,\theta,\phi)$ and $P_\theta[u]=\phi$.
\end{theorem}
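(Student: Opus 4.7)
My plan is to deduce the two directions from two main ingredients: a \emph{mass-preservation identity} for the envelope operator $P_\theta[\cdot]$, and a \emph{rigidity} principle saying that a $\theta$-psh potential dominated by a model-type potential with the same non-pluripolar mass must coincide with it. The mass-preservation identity asserts
\[
\int_X \theta_{P_\theta[u]}^n \;=\; \int_X \theta_u^n \qquad \text{for every } u\in\textup{PSH}(X,\theta),
\]
which I would obtain by approximating $P_\theta[u]$ from below by the increasing rooftop envelopes $P_\theta(u+C,V_\theta)$ (each having minimal singularities) and invoking continuity of non-pluripolar Monge--Amp\`ere along such a sequence, combined with the monotonicity result \cite[Theorem 1.1]{DDL2} to rule out mass loss in the limit.

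The easy direction $(\Leftarrow)$ is then immediate: if $P_\theta[u]=\phi$, then $u\preceq P_\theta[u]=\phi$ by the very definition of $P_\theta[u]$, and mass-preservation gives $\int_X \theta_u^n = \int_X \theta_{P_\theta[u]}^n = \int_X \theta_\phi^n$, so $u\in\mathcal E(X,\theta,\phi)$. For $(\Rightarrow)$, assume $u\in\mathcal E(X,\theta,\phi)$ and pick $C_0$ with $u\leq\phi+C_0$. Then for every $C\geq 0$ one has $P_\theta(u+C,V_\theta)\leq P_\theta(\phi+C_0+C,V_\theta)$ by monotonicity of the rooftop envelope; letting $C\to\infty$ and taking the regularized supremum yields $P_\theta[u]\leq P_\theta[\phi]=\phi$. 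Combining with mass-preservation, $\int_X \theta_{P_\theta[u]}^n=\int_X\theta_u^n=\int_X\theta_\phi^n>0$, so $P_\theta[u]\leq\phi$ with equal positive masses, both being model-type potentials.

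The main obstacle, and the technical heart of the argument, is to upgrade this to the equality $P_\theta[u]=\phi$. For this I would establish the following rigidity lemma: if $v,w\in\textup{PSH}(X,\theta)$ satisfy $v\leq w$, $w=P_\theta[w]$, and $\int_X\theta_v^n=\int_X\theta_w^n>0$, then $v=w$. Applied to $v=P_\theta[u]$ and $w=\phi$, the lemma finishes the proof. The lemma itself I would argue by contradiction: if $\{v<w-\delta\}$ has positive Lebesgue measure for some $\delta>0$, set $\tilde v:=\max(v,w-\delta)\in\textup{PSH}(X,\theta)$, so that $v\leq\tilde v\leq w$ with $\tilde v>v$ on an open set of positive measure. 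Using a Bedford--Taylor type comparison estimate on the region $\{v<w-\delta\}$, together with the model-type property of $w$ (which guarantees that after truncation by $w-\delta$ one still captures a strictly positive amount of the mass of $\theta_w^n$, preventing all of it from escaping into the polar set), one derives the strict inequality $\int_X \theta_v^n<\int_X \theta_{\tilde v}^n\leq\int_X\theta_w^n$, contradicting the equality of masses. This is precisely where the hypothesis $\phi=P_\theta[\phi]$ is genuinely used, and where the assumption $\int_X\theta_\phi^n>0$ enters to keep the comparison non-degenerate.
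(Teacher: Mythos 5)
This statement is not proved in the paper at all: it is recalled verbatim from \cite[Theorem 1.3]{DDL2}, so the benchmark is the argument there. Your first ingredient, the mass preservation $\int_X\theta_{P_\theta[u]}^n=\int_X\theta_u^n$, is true and your approximation route can be made to work, but note that the envelopes $P_\theta(u+C,V_\theta)$ do \emph{not} have minimal singularities: they satisfy $u-\sup_Xu\le P_\theta(u+C,V_\theta)\le u+C$, so each has the singularity type of $u$ --- which is in fact exactly what you need, since then each has mass $\int_X\theta_u^n$ by \cite[Theorem 1.2]{WN17}, and semicontinuity of masses along the increasing limit plus monotonicity gives the identity. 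The easy direction, and the reduction of the hard direction to ``$P_\theta[u]\le\phi$ with equal positive masses'', are fine.

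The genuine gap is your rigidity lemma and its proposed proof. As stated ($v\le w$, $w=P_\theta[w]$, equal positive masses imply $v=w$) it is false: take $w=\phi$ and $v=\phi-1$. More importantly, the mechanism cannot work: since $w-\delta\le\tilde v\le w$, the potential $\tilde v=\max(v,w-\delta)$ has the same singularity type as $w$, hence $\int_X\theta_{\tilde v}^n=\int_X\theta_w^n$, which under your standing hypothesis equals $\int_X\theta_v^n$; so the strict inequality $\int_X\theta_v^n<\int_X\theta_{\tilde v}^n$ you aim for is never available, and in particular is not forced by $\{v<w-\delta\}$ having positive measure (again $v=w-1$ is a counterexample). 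Non-pluripolar masses only see singularity types, so no comparison of total masses can distinguish $P_\theta[u]$ from $\phi$ at this stage. The missing input is structural and uses that $v=P_\theta[u]$ is itself an envelope: since $\int_X\theta_u^n>0$, $P_\theta[u]$ is a model potential by \cite[Theorem 3.12]{DDL2}, and its Monge--Amp\`ere measure is concentrated on the contact set $\{P_\theta[u]=V_\theta\}$ (see \cite[Theorem 3.8]{DDL2}); as $P_\theta[u]\le\phi\le V_\theta$, this gives $\phi\le P_\theta[u]$ almost everywhere with respect to $\theta_{P_\theta[u]}^n$, and the domination principle \cite[Proposition 3.11]{DDL2} (applicable because the masses are equal and positive) upgrades this to $\phi\le P_\theta[u]$ on $X$, hence $P_\theta[u]=\phi$. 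Without an argument of this kind --- which is precisely the content of \cite[Theorem 1.3]{DDL2} --- your rigidity step amounts to assuming a statement essentially equivalent to the theorem itself.
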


\paragraph*{Model potentials.}
Potentials $\phi$ that satisfy $\phi = P[\phi]$ are called \emph{model potentials}, and play an important role in finite energy pluripotential theory, as evidenced in the statement of the above theorem. 
The connection with model type singularities $[u]$ (defined in the introduction) is as follows: in case $\int_X \theta_u^n >0$, it was proved in \cite[Theorem 3.12]{DDL2} that $P_\theta[P_\theta[u]]=P_\theta[u]$. To summarize, every model type singularity  with non-vanishing mass has a model potential representative. 

As further evidenced by the next lemma, potentials with model type singularity play a distinguished role in the theory: 

\begin{lemma}\label{lem: compactness and model type}
Let $\phi \in \PSH(X,\theta)$ with $\int_X \theta_{\phi}^n>0$. Then the following are equivalent: 
\begin{itemize}
\item[(i)] The set $\mathcal{F}:=\{u \in \PSH(X,\theta) \setdef \sup_X (u-\phi)=0\}$ is relatively compact in the $L^1$-topology of potentials.
\item[(ii)] $\phi$ has model type singularity.
\end{itemize}
\end{lemma}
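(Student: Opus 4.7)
The plan is to establish both implications by directly manipulating the envelope $P_\theta[\phi]$, with (ii) $\Rightarrow$ (i) carrying the main content and (i) $\Rightarrow$ (ii) handled by exhibiting an escaping sequence.

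For (ii) $\Rightarrow$ (i), I first reduce to the case $\phi = P_\theta[\phi]$: because $\phi$ has model type singularity it differs from $P_\theta[\phi]$ by a bounded function, which translates the set $\mathcal{F}$ by a bounded set and hence preserves relative $L^1$-compactness. Under this reduction the key claim is that every $u \in \mathcal{F}$ automatically satisfies $\sup_X u = 0$, which places $\mathcal{F}$ inside the $L^1$-compact set $\{w \in \PSH(X,\theta) : \sup_X w = 0\}$. To prove the claim, note first that $\sup_X(u - \phi) = 0$ forces $u \leq \phi$ pointwise on $X$ (in particular $\{\phi = -\infty\} \subset \{u = -\infty\}$, since otherwise $u - \phi$ would equal $+\infty$ at some point). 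Setting $v := u - \sup_X u$, one has $\sup_X v = 0$ and hence $v \leq V_\theta$, while the inequality above gives $v \leq \phi + (-\sup_X u)$, exhibiting $v$ as an admissible competitor in the defining family $\{\psi \in \PSH(X,\theta) : \psi \leq 0,\ \psi \preceq \phi\}$ whose envelope is $P_\theta[\phi] = \phi$. Hence $u - \sup_X u \leq \phi$, and taking $\sup_X$ of both sides yields $\sup_X u \geq \sup_X(u - \phi) = 0$; combined with the trivial upper bound $\sup_X u \leq \sup_X \phi \leq 0$ (the latter because $\phi = P_\theta[\phi] \leq V_\theta \leq 0$), this forces $\sup_X u = 0$.

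For (i) $\Rightarrow$ (ii), I argue by contraposition. If $\phi$ does not have model type singularity, then $\sup_X(P_\theta[\phi] - \phi) = +\infty$. Using the description $P_\theta[\phi] = \bigl(\lim_k P_\theta(\phi + k, V_\theta)\bigr)^*$, the increasing family $w_k := P_\theta(\phi + k, V_\theta) \leq V_\theta$ has suprema $a_k := \sup_X(w_k - \phi) \uparrow +\infty$. The renormalized potentials $v_k := w_k - a_k$ then lie in $\mathcal{F}$ by construction, while $\sup_X v_k \leq \sup_X V_\theta - a_k \to -\infty$, so $\{v_k\}$ has no $L^1$-convergent subsequence and $\mathcal{F}$ fails to be relatively compact.

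The main step requiring care is the candidacy verification in (ii) $\Rightarrow$ (i): one must check that $v = u - \sup_X u$ is globally a $\theta$-psh function satisfying $v \leq 0$ and $v \preceq \phi$, so that it genuinely enters the family defining $P_\theta[\phi]$, and this in turn relies on correctly reading $\sup_X(u - \phi) = 0$ as a pointwise-everywhere statement (accommodating the pluripolar set $\{\phi = -\infty\}$). The monotone convergence $a_k \to +\infty$ in (i) $\Rightarrow$ (ii) is routine from the monotonicity of $k \mapsto w_k$ together with the definition of $P_\theta[\phi]$, and the hypothesis $\int_X \theta_\phi^n > 0$ enters only implicitly, ensuring that $P_\theta[\phi]$ is a genuinely nontrivial $\theta$-psh potential.
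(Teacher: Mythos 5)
Your proposal is correct. The implication (ii) $\Rightarrow$ (i) is essentially the paper's argument: both control $\sup_X u$ for $u\in\mathcal{F}$ by observing that $u-\sup_X u$ is a competitor for the envelope $P_\theta[\phi]$ and then invoke the $L^1$-compactness of $\theta$-psh functions with normalized suprema from \cite{GZ05}; your preliminary reduction to $\phi=P_\theta[\phi]$ is cosmetic, but note it tacitly uses that $P_\theta[\cdot]$ depends only on the singularity type, so that the model type hypothesis gives $P_\theta[P_\theta[\phi]]=P_\theta[\phi]$ --- worth one explicit line. The implication (i) $\Rightarrow$ (ii) is where you genuinely diverge from the paper. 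There the authors take $u_t=P_\theta(\phi+t,P_\theta[\phi])$ and must prove the nontrivial claim $\sup_X(u_t-t-\phi)=0$ by contradiction, using the contact-set property of rooftop envelopes \cite[Lemma 3.7]{DDL2} and the domination principle \cite[Proposition 3.11]{DDL2}; this is precisely where the hypothesis $\int_X\theta_\phi^n>0$ enters their proof. You instead take $w_k=P_\theta(\phi+k,V_\theta)$ and renormalize by the exact relative supremum $a_k=\sup_X(w_k-\phi)$, so membership of $v_k=w_k-a_k$ in $\mathcal{F}$ is tautological, and the only point to verify is $a_k\uparrow+\infty$: if $a_k\le A$ for all $k$, then $w_k\le\phi+A$, hence (using upper semicontinuity of $\phi$ when passing to the regularized increasing limit) $P_\theta[\phi]\le\phi+A$, which together with the always-valid bound $\phi-P_\theta[\phi]\le\sup_X\phi$ forces $\phi$ to be of model type --- make these two small steps, and the observation that $\sup_X v_k\to-\infty$ rules out $L^1$-bounded subsequences, explicit. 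Your route is more elementary (no Monge--Amp\`ere measures, no domination principle) and never uses $\int_X\theta_\phi^n>0$, so it in fact establishes the equivalence without the mass hypothesis; the paper's argument, by contrast, is wedded to the pluripotential machinery it develops (and to the positive-mass standing assumption), which is why that hypothesis appears in the statement.
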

\begin{proof}
Assume that $\phi$ has model type singularity and let $C_0>0$ be a constant such that $-C_0 + P_{\theta}[\phi] \leq \phi \leq P_{\theta}[\phi]+C_0$ on $X$. Then 
$$
\sup_X (u-P_{\theta}[\phi]) -C_0 \leq \sup_X (u-\phi) \leq \sup_X (u-P_{\theta}[\phi]) +C_0. 
$$
Now, observe that any $u \in \PSH(X,\theta)$ which is more singular than $P_\theta[\phi]$ satisfies $u-\sup_X u\leq P_\theta[\phi]\leq 0$, hence $\sup_X u=\sup_X (u-P_{\theta}[\phi])$. Therefore, $\mathcal F$ is contained in the following set 
$$
\{u\in \PSH(X,\theta) \ : \ -C_0 \leq \sup_X u \leq C_0\}.
$$
The latter set is compact in the $L^1$-topology as follows from \cite[Proposition 2.6]{GZ05}.

Next we prove that ``not $(ii)$" implies ``not $(i)$". Assume that $\phi$ does not have model type singularity, i.e. $\phi-P_{\theta}[\phi]$ is unbounded.  Consider $u_t:= P_\theta(\phi+t,P_\theta[\phi])$, $t>0$. Then $u_t \leq P_\theta[\phi]\leq 0$ and also $u_t \leq \phi+t$ for all $t$. 

We claim that $\sup_X (u_t-t-\phi) =0.$ We are going to argue this by contradiction. If it is not the case then, by \cite[Lemma 3.7]{DDL2} the (non-pluripolar) Monge-Amp\`ere measure of $u_t$ is concentrated on the set $\{P_\theta(\phi+t,P_\theta[\phi])=P_\theta[\phi]\}$, hence 
$$
\int_{\{u_t <P_\theta[\phi]\}} \theta_{u_t}^n  =0. 
$$
Since $\int_X \theta_{u_t}^n =\int_X \theta_{\phi}^n=\int_X \theta_{P_\theta[\phi]}^n>0$, the domination principle (\cite[Proposition 3.11]{DDL2}) then ensures that $u_t=P_\theta[\phi]$. On the other hand, $u_t\leq \phi +t$, hence $P_\theta[\phi]\leq \phi +t$. Since $\phi$ is more singular than $P_{\theta}[\phi]$ we infer that $\phi-P_{\theta}[\phi]$ is bounded  which is a contradiction. This proves the  claim.

It then follows that $u_t-t\in \mathcal{F}$. However $u_t-t\leq P_\theta[\phi]-t \searrow -\infty$ as $t\rightarrow \infty$. This implies that the set $\mathcal{F}$ is not relatively compact, as desired.
\end{proof}

Next we point out a slight generalization of the comparison principle of \cite{DDL2}, that will be used in the sequel:

\begin{lemma}
\label{lem: DDL2 comparison principle}
Assume that $u,v\in \PSH(X,\theta)$ and $P[u]$ is less singular than $v$. Then 
$$\int_{\{u<v\}} \theta_v^n \leq \int_{\{u<v\}} \theta_u^n.$$
\end{lemma}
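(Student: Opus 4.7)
The strategy is to reduce to the same–singularity-type case treated in \cite{DDL2} by a careful approximation involving the model envelope $\phi := P_\theta[u]$. By construction $u \preceq \phi$, and by hypothesis also $v \preceq \phi$, so we may fix a single constant $A>0$ with $u \leq \phi + A$ and $v \leq \phi + A$ on $X$. For each $k > A$ introduce the truncations
\[
u_k := \max(u, \phi - k), \qquad v_k := \max(v, \phi - k).
\]
Both are $\theta$-psh, and the bounds $\phi - k \leq u_k,\,v_k \leq \phi + A$ show $[u_k] = [v_k] = [\phi]$, which is a model type singularity. Moreover $u_k \searrow u$ and $v_k \searrow v$ pointwise on $X$.

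With $u_k$ and $v_k$ sharing the same (model) singularity type, the standard comparison principle of \cite{DDL2} applies and yields
\[
\int_{\{u_k < v_k\}} \theta_{v_k}^n \;\leq\; \int_{\{u_k < v_k\}} \theta_{u_k}^n
\qquad \text{for every } k > A.
\]
The remaining task is to pass to the limit $k \to \infty$. The plurifine locality of the non-pluripolar product (\cite[Lemma~3.7]{DDL2} style) gives on the plurifine open set $\{u > \phi - k\}$ the identity $\mathbbm{1}_{\{u>\phi-k\}}\theta_{u_k}^n = \mathbbm{1}_{\{u>\phi-k\}}\theta_u^n$, and symmetrically for $v$. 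A direct case check establishes the set identities
\[
\{u_k < v_k\}\cap \{u > \phi - k\} = \{u<v\}\cap\{u>\phi-k\},
\qquad
\{u_k < v_k\}\cap \{v > \phi - k\} = \{u<v\}\cap\{v>\phi-k\},
\]
both of which increase to $\{u<v\}$ modulo the pluripolar sets $\{u=-\infty\}$ and $\{v=-\infty\}$.

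Splitting each side of the comparison inequality into the piece on $\{u > \phi-k\}$ (resp.\ $\{v>\phi-k\}$) and its complement, monotone convergence combined with plurifine locality gives
\[
\liminf_{k\to\infty}\int_{\{u_k<v_k\}}\theta_{v_k}^n \;\geq\; \lim_{k\to\infty}\int_{\{u<v\}\cap\{v>\phi-k\}}\theta_v^n \;=\; \int_{\{u<v\}}\theta_v^n,
\]
while on the other side, using $\theta_{u_k}^n = \theta_\phi^n$ on the plurifine open $\{u<\phi-k\}$,
\[
\int_{\{u_k<v_k\}}\theta_{u_k}^n \;\leq\; \int_{\{u<v\}\cap\{u>\phi-k\}}\theta_u^n + \int_{\{u<\phi-k\}}\theta_\phi^n.
\]
The first term tends to $\int_{\{u<v\}}\theta_u^n$, and the second tends to $0$ since $\bigcap_k\{u<\phi-k\}\subseteq\{u=-\infty\}$ is pluripolar while $\theta_\phi^n$ is non-pluripolar. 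Combining these three limits gives the claimed inequality.

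The main obstacle, and the only delicate step, is the limit passage: one must verify cleanly that the ``buffer'' contribution coming from the regions where the truncations $u_k$ or $v_k$ are activated (i.e.\ where $u$ or $v$ drops below $\phi - k$) disappears in the limit, which forces the explicit use of plurifine locality together with the non-pluripolarity of $\theta_\phi^n$. Everything else is a matter of bookkeeping with the four cases for the sign of $u-(\phi-k)$ and $v-(\phi-k)$.
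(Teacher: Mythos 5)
Your reduction to the same--singularity-type case and the $v$-side of the limit passage are fine, but the $u$-side bound
\[
\int_{\{u_k<v_k\}}\theta_{u_k}^n \;\le\; \int_{\{u<v\}\cap\{u>\phi-k\}}\theta_u^n + \int_{\{u<\phi-k\}}\theta_\phi^n
\]
is false in general, and this is exactly the ``delicate step'' you flagged. Plurifine locality identifies $\theta_{u_k}^n$ with $\theta_u^n$ on the plurifine open set $\{u>\phi-k\}$ and with $\theta_\phi^n$ on $\{u<\phi-k\}$, but it says nothing about the contact set $\{u=\phi-k\}$, and $\theta_{\max(u,\phi-k)}^n$ typically charges that set: already in dimension one with $\phi=0$ and $u$ of $\log|z|$ type, essentially all of the mass of $\theta_{\max(u,-k)}$ sits on the circle $\{u=-k\}$. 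Whenever $v>\phi-k$ on part of $\{u=\phi-k\}$, that contact mass lies in $\{u_k<v_k\}$, hence is counted on the left, but it appears in neither term on the right; one can arrange the left side to be strictly larger than your right side for every $k$.

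The buffer has to be killed by total-mass bookkeeping instead: bound it by
\[
\theta_{u_k}^n\big(\{u\le\phi-k\}\big) \;=\; \int_X\theta_{u_k}^n - \int_{\{u>\phi-k\}}\theta_u^n \;=\; \int_X\theta_{P[u]}^n - \int_{\{u>\phi-k\}}\theta_u^n,
\]
using $[u_k]=[P[u]]$ and the monotonicity of masses \cite{WN17}; this tends to $0$ precisely because of the nontrivial mass equality $\int_X\theta_{P[u]}^n=\int_X\theta_u^n$ from \cite{DDL2}, an ingredient your argument never invokes. (Relatedly, your claim that $[P[u]]$ is a model type singularity rests on \cite[Theorem 3.12]{DDL2} and needs $\int_X\theta_u^n>0$; this is harmless, since equality of singularity types already suffices to apply the comparison principle, and the zero-mass case of the lemma is trivial.) Once repaired in this way your truncation scheme does work, but note that the paper's proof is shorter and truncation-free: normalizing $u,v\le 0$, one checks $u,\max(u,v)\in\Ec(X,\theta,P[u])$ (again via the same mass equality), applies \cite[Corollary 3.6]{DDL2} to this pair, and concludes by plurifine locality since $\max(u,v)=v$ on $\{u<v\}$.
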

\begin{proof}
We can assume that $u,v \leq 0$. Let $\varphi = \max(u,v)$. Then $u,\varphi \in \Ec(X,\theta, P[u])$. Indeed, Theorem \cite[Theorem 2.3]{DDL2} gives that $\int_X \theta_u^n = \int_X \theta_{P[u]}^n$. Also, since $u \leq \varphi \leq P[u]$, \cite[Theorem 1.2]{WN17} gives that $\int_X \theta_\varphi^n = \int_X \theta_u^n=\int_X \theta_{P[u]}^n$.

The comparison principle in \cite[Corollary 3.6]{DDL2} and the locality of the complex Monge-Amp\`ere measure with respect to the plurifine topology gives the result: 
$$\int_{\{u<v\}} \theta_v^n = \int_{\{u<\varphi\}} \theta_{\varphi}^n \leq \int_{\{u<\varphi\}} \theta_u^n = \int_{\{u<v\}} \theta_u^n. $$
\end{proof}

For additional technical results regarding the potential theory of $\mathcal E(X,\theta,\phi)$, we refer to \cite[Section 3]{DDL2}. 

\subsection{The relative finite energy class $\mathcal E^1(X,\theta,\phi)$}

Under the assumption of small unbounded locus, the finite energy class $\mathcal E^1(X,\theta,\phi)$ was introduced in \cite{DDL2} with the goal of developing a variational approach to \eqref{eq: CMAE_with_sing}, generalizing the results of \cite{BBGZ13}. Though we take a different angle on equations with prescribed singularity type in this work, this space will still play an important role in the sequel. We start with the definition:
$$\mathcal E^1(X,\theta,\phi) = \left\{u \in \mathcal E(X,\theta,\phi) \ \textup{ such that } \int_X |u -\phi| \theta_u^n < + \infty \right\}.$$
Let us note that, in the case of $\phi$ having small unbounded locus, the above definition of $\mathcal{E}^1$ is equivalent to the one given in \cite[page 13]{DDL2} using the Monge-Amp\`ere energy $\AM_\phi$.  In the case of a general $\phi$ (i.e. not necessarily with small unbounded locus), the above definition is more convenient since in this setting the definition of the energy $\AM_\phi$ is quite delicate.

In our first result we generalize the \emph{fundamental inequality}  \cite[Lemma 2.3]{GZ07} from the K\"ahler case to our context:

\begin{lemma}\label{lem: GZ fundamental inequality}
	Let $\phi$ be a model potential with $\int_X \theta_{\phi}^n>0$. Assume that $u,v \in \Ec(X,\theta,\phi)$ are such that $v\leq u\leq 0$. Then  
	\[
	\int_X |u-\phi| \theta_u^n  \leq 2^{n+1} \int_X |v-\phi| \theta_v^n. 
	\]
In particular, if $v \in \mathcal E^1(X,\theta,\phi)$ then $u\in \mathcal E^1(X,\theta,\phi)$.
\end{lemma}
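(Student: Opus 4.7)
The inequality is a ``relative fundamental inequality'' in the spirit of Guedj--Zeriahi \cite{GZ07}; my plan combines a reduction via the model envelope $P_\theta[\phi]$, the comparison principle (Lemma \ref{lem: DDL2 comparison principle}), and the multilinearity of non-pluripolar products. First, since $\phi = P_\theta[\phi]$ is the upper semi-continuous regularization of $\sup\{\psi \in \PSH(X,\theta) : \psi \leq 0,\; \psi \preceq \phi\}$, and since $u, v \in \Ec(X,\theta,\phi) \subset \PSH(X,\theta,\phi)$ satisfy $u, v \leq 0$, both $u$ and $v$ are competitors in this supremum. Being upper semi-continuous themselves, $u$ and $v$ therefore satisfy $u \leq \phi$ and $v \leq \phi$ pointwise, so $|u-\phi| = \phi - u$ and $|v - \phi| = \phi - v$. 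Thus it suffices to prove
\[
\int_X (\phi - u)\, \theta_u^n \leq 2^{n+1} \int_X (\phi - v)\, \theta_v^n.
\]

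Second, from Lemma \ref{lem: DDL2 comparison principle} applied to the sub-level sets $\{\alpha < \beta - s\}$ (noting that $P_\theta[\alpha] = \phi$ is less singular than $\beta - s$ by Theorem \ref{thm: DDL2_E_char}), combined with the coarea formula, one obtains the following ``measure-switch'' inequality: for any $\alpha, \beta \in \Ec(X,\theta,\phi)$ with $\alpha \leq \beta$,
\[
\int_X (\beta - \alpha)\, \theta_\beta^n \leq \int_X (\beta - \alpha)\, \theta_\alpha^n.
\]
Setting $w := (u+v)/2 \in \Ec(X,\theta,\phi)$, the multilinearity of the non-pluripolar product \cite[Section 1.2]{BEGZ10} yields $2^n \theta_w^n = (\theta_u + \theta_v)^n$, so $\theta_u^n \leq 2^n \theta_w^n$ as Borel measures. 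Combined with $\phi - u \leq \phi - v$ (from $v \leq u$), this gives
\[
\int_X (\phi - u)\, \theta_u^n \leq 2^n \int_X (\phi - v)\, \theta_w^n.
\]
It remains to prove $\int_X (\phi - v)\, \theta_w^n \leq 2 \int_X (\phi - v)\, \theta_v^n$ in order to reach the factor $2^{n+1}$. Writing $\phi - v = (\phi - w) + (w - v)$, the switch applied to $(\alpha, \beta) = (v, w)$ handles the second piece via $\int (w - v)\, \theta_w^n \leq \int (w - v)\, \theta_v^n \leq \int (\phi - v)\, \theta_v^n$ (using $w - v \leq \phi - v$).

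The main obstacle is bounding $\int_X (\phi - w)\, \theta_w^n$ by a constant multiple of $\int_X (\phi - v)\, \theta_v^n$. The measure switch naturally controls $\int (w - v)\, \theta_w^n$ rather than $\int (\phi - w)\, \theta_w^n$, so a more refined argument is needed here. I expect it to exploit the identity $P_\theta[w] = \phi$ (Theorem \ref{thm: DDL2_E_char}) together with a careful analysis of the auxiliary potentials $\max(w, \phi - t)$ and the plurifine locality of the non-pluripolar product on the level sets $\{w < \phi - t\}$, mirroring the way Lemma \ref{lem: compactness and model type} was established.
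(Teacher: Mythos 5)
Your setup is fine up to the point you yourself flag: the reduction to $u\leq\phi$, $v\leq\phi$, the ``measure-switch'' inequality $\int_X(\beta-\alpha)\theta_\beta^n\leq\int_X(\beta-\alpha)\theta_\alpha^n$ (layer cake plus Lemma \ref{lem: DDL2 comparison principle}, legitimate since $P_\theta[\alpha]=\phi$ is less singular than $\beta-s$), the bound $\theta_u^n\leq 2^n\theta_w^n$ for $w=(u+v)/2$, and the treatment of the $(w-v)$ piece are all correct. But the remaining piece is a genuine gap, not a routine verification. To land on the constant $2^{n+1}$ you need precisely $\int_X(\phi-w)\theta_w^n\leq\int_X(\phi-v)\theta_v^n$, i.e.\ the lemma itself for the pair $(w,v)$ \emph{with constant $1$}. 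With any constant at all, this estimate is exactly the statement being proved (applied to $w\geq v$), so invoking it is circular; with constant $1$ it is strictly stronger than the lemma, and there is no reason to expect it: the functional $u\mapsto\int_X(\phi-u)\theta_u^n$ is not monotone under $v\leq u$ (a first-variation computation already in the K\"ahler case, $\phi=0$, produces the term $n\int\chi\,\omega\wedge\omega_u^{n-1}$ with the wrong sign, so the energy can increase as the potential increases). The closing sentence of your proposal (``I expect it to exploit $P_\theta[w]=\phi$ \dots mirroring Lemma \ref{lem: compactness and model type}'') is a hope, not an argument, and the obstruction above indicates it cannot be carried out as stated.

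The fix is a different choice of averaging: replace $w=(u+v)/2$ by $(u+\phi)/2$. Then $\theta_u^n\leq 2^n\theta_{(u+\phi)/2}^n$ still holds by multilinearity, and the elementary inclusions
\[
\{u<\phi-2t\}\ \subset\ \{v<(u+\phi)/2-t\}\ \subset\ \{v<\phi-t\}
\]
(valid because $v\leq u\leq\phi$) let you write $\int_X(\phi-u)\theta_u^n=2\int_0^{\infty}\theta_u^n(u<\phi-2t)\,dt$, pass to $\theta_{(u+\phi)/2}^n$ on the middle set, and then apply the comparison principle \emph{once} in the form $\theta_{(u+\phi)/2}^n(v<(u+\phi)/2-t)\leq\theta_v^n(v<(u+\phi)/2-t)$ to arrive directly at $2^{n+1}\int_X(\phi-v)\theta_v^n$. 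This is the paper's route; the point is that averaging $u$ with the model potential $\phi$ (rather than with $v$) makes the sublevel sets compatible with a single application of the comparison principle against $\theta_v^n$, so no residual term of the type $\int_X(\phi-w)\theta_w^n$ ever appears.
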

Note that above we don't rule out the possibility that the quantities in the above inequality might be infinite.
\begin{proof}
We first point out that we actually have $v\leq u\leq \phi \leq 0$. Indeed, since $u,v \in \mathcal E(X,\theta,\phi)$ we get that $v \leq u \leq P[\phi]=\phi \leq 0$.  

We also recall that for a Borel measure $\mu$  and a positive measurable function $f$ on $X$ we have 
	\[
	\int_X f\, d\mu = \int_0^{+\infty} \mu(f>t) dt. 
	\]  
	Applying this to $f=|u-\phi|=\phi-u$ and $\mu= \theta_u^n$ we obtain 
	\[
	\int_X |u-\phi| \theta_u^n =\int_{0}^{+\infty} \theta_u^n(u<\phi-t) dt = 2\int_0^{+\infty} \theta_u^n (u<\phi-2t) dt. 
	\]
	Observe that, since $\phi\geq u\geq v$ the following inclusions of sets hold
	\[
	\{u<\phi-2t\} \subset \{v< (u+ \phi)/2 -t\} \subset \{v < \phi-t\}. 
	\]
	The comparison principle \cite[Corollary 3.6]{DDL2} and the fact that $\theta_u^n\leq 2^n \theta_{\frac{u+\phi}{2}}^n$ then yield
	\begin{flalign*}
		\int_X |u-\phi| \theta_u^n & =  2\int_0^{+\infty} \theta_u^n (u<\phi-2t) dt \leq  2\int_0^{+\infty} \theta_u^n (v<(\phi+u)/2-t) dt \\
		& \leq 2^{n+1}  \int_0^{+\infty} \theta_{\frac{u+\phi}{2}}^n (v<(u+\phi)/2-t) dt \\
		&\leq 2^{n+1}  \int_0^{+\infty} \theta_v^n (v<(\phi+u)/2-t) dt\\
		& \leq 2^{n+1}  \int_0^{+\infty} \theta_v^n (v<\phi-t) dt = 2^{n+1} \int_X |v-\phi| \theta_v^n. 
	\end{flalign*}
\end{proof}

Next we generalize another result from \cite{GZ07}:

\begin{lemma}\label{lem: GZ fundamental inequality 2} 
	Let $\phi$ be a model potential with $\int_X \theta_{\phi}^n>0$. Suppose $u,v \in \mathcal E(X,\theta,\phi)$ and $u,v \leq 0$. Then the following hold:
\begin{equation*}
\int_X |u - \phi| \theta_v^n \leq 2\int_X |u-\phi| \theta_u^n + 2\int_X |v-\phi| \theta_v^n.
\end{equation*}
\end{lemma}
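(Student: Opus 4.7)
The plan is to use a layer-cake decomposition together with the comparison principle of Lemma \ref{lem: DDL2 comparison principle}. First I would note that, exactly as in the proof of Lemma \ref{lem: GZ fundamental inequality}, the assumption $u,v \in \mathcal E(X,\theta,\phi)$ combined with $\phi = P_\theta[\phi]$ and Theorem \ref{thm: DDL2_E_char} forces $u,v \leq \phi \leq 0$ and $P_\theta[u] = P_\theta[v] = \phi$. In particular, $|u-\phi|=\phi-u$ and we may write
\[
\int_X |u-\phi|\,\theta_v^n \;=\; \int_0^{+\infty} \theta_v^n\bigl(u<\phi-t\bigr)\,dt.
\]

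Next I would split the level set. The pointwise implication
\[
\{u<\phi-t\} \;\subset\; \{u<v-t/2\} \,\cup\, \{v<\phi-t/2\}
\]
is elementary: if $v\geq \phi-t/2$ and $u<\phi-t$, then $u-v < (\phi-t)-(\phi-t/2)=-t/2$. Thus
\[
\theta_v^n(u<\phi-t) \;\leq\; \theta_v^n(u<v-t/2) \;+\; \theta_v^n(v<\phi-t/2).
\]

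For the first term I apply Lemma \ref{lem: DDL2 comparison principle} to $u$ and the $\theta$-psh function $v-t/2$: since $P_\theta[u]=\phi$ is less singular than $v-t/2$ (which has the same singularity type as $v\preceq \phi$), and since $\theta_{v-t/2}^n = \theta_v^n$, the lemma gives $\theta_v^n(u<v-t/2)\leq \theta_u^n(u<v-t/2)$. Integrating in $t$ and changing variables $s=t/2$,
\[
\int_0^{+\infty}\theta_v^n(u<v-t/2)\,dt \;\leq\; 2\int_0^{+\infty}\theta_u^n(v-u>s)\,ds \;=\; 2\int_X (v-u)_+\,\theta_u^n \;\leq\; 2\int_X |u-\phi|\,\theta_u^n,
\]
where in the final estimate we used $v\leq \phi$ and $u\leq \phi$, so $(v-u)_+\leq \phi-u$. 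For the second term, a direct layer-cake identity gives
\[
\int_0^{+\infty}\theta_v^n(v<\phi-t/2)\,dt \;=\; 2\int_X |v-\phi|\,\theta_v^n.
\]
Adding the two estimates yields the claim. The only delicate point is verifying the hypothesis of the comparison principle, which is exactly where we use that $\phi$ is a model potential with $\int_X\theta_\phi^n>0$ and that $u\in\mathcal E(X,\theta,\phi)$, so that $P_\theta[u]=\phi$; everything else is a mechanical layer-cake manipulation.
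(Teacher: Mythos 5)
Your proof is correct and follows essentially the same route as the paper: a layer-cake decomposition of $\int_X|u-\phi|\,\theta_v^n$, the sublevel-set inclusion $\{u<\phi-t\}\subset\{u<v-t/2\}\cup\{v<\phi-t/2\}$ (the paper's version with $2t$ in place of $t$), and the comparison principle to trade $\theta_v^n$ for $\theta_u^n$ on the first piece, justified exactly as you say by $P_\theta[u]=\phi$ and $v\leq\phi$. Your final bound via $(v-u)_+\leq\phi-u$ is just a reformulation of the paper's inclusion $\{u\leq v-t\}\subset\{u\leq\phi-t\}$, so there is no substantive difference.
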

\begin{proof}
As in the previous lemma, we actually have $u\leq \phi \leq 0$ and $v \leq \phi \leq 0$ so we can start writing
$$\int_X |u - \phi| \theta_v^n = 2 \int_0^{+\infty} \theta_v^n(u - \phi \leq - 2t ) dt.$$
To continue we notice that 
\begin{flalign*}
\{ u  -\phi \leq - 2t\}\subset 
 \{v - \phi \leq - t \} \cup \{u \leq v-t \}.
\end{flalign*}
Putting the above together, and using the comparison principle \cite[Corollary 3.6]{DDL2}, we can continue to finish the proof:
\begin{flalign*}
\int_X |u - \phi| \theta_v^n &\leq 2\int_0^{+\infty} \theta_v^n(v - \phi \leq - t) + 2\int_0^{+\infty} \theta_v^n(u \leq v-t ) \\
&\leq  2 \int_X |v -\phi| \theta_v^n + 2  \int_0^{+\infty} \theta_u^n(u \leq v-t ) \\
&\leq  2 \int_X |v -\phi| \theta_v^n + 2  \int_0^{+\infty} \theta_u^n(u \leq \phi-t )\\
&= 2 \int_X |v -\phi| \theta_v^n + 2  \int_X |u -\phi| \theta_u^n.
\end{flalign*}
\end{proof}
Next we point out that $\mathcal E^1(X,\theta,\phi)$ is $L^1$-stable in a certain sense:
\begin{lemma}
	\label{lem: BEGZ phi}

	Let $\phi$ be a model potential with $\int_X \theta_{\phi}^n>0$. Assume that the sequence $u_j \in \mathcal E^1(X,\theta,\phi)$ is normalized by $\sup_X u_j=0$, with each member satisfying
	\[
	\int_X |u_j-\phi|\theta_{u_j}^n \leq A,
	\]
	for some $A>0$. If $u_j \to u\in \PSH(X,\theta)$ in $L^1(X,\omega^n)$, then $u\in \Ec(X,\theta,\phi)$ and 
\begin{equation*}
	\int_X |u-\phi| \theta_u^n\leq 2^{n+3} A.
\end{equation*}
\end{lemma}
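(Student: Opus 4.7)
The plan is to construct a decreasing majorant $v_k$ of the tail of $(u_j)$, show each $v_k\in \mathcal{E}^1(X,\theta,\phi)$ with a uniform energy bound via Lemma \ref{lem: GZ fundamental inequality}, and then pass to the monotone limit $v_k\searrow u$ using the continuity of non-pluripolar Monge-Amp\`ere measures along decreasing sequences in the full-mass class, combined with Lemma \ref{lem: GZ fundamental inequality 2} applied to the pairs $(u_k,v_k)$.

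Concretely, I would set $v_k:=(\sup_{j\geq k} u_j)^*$. This is $\theta$-psh, decreasing in $k$, and the $L^1$-convergence $u_j\to u$ gives $v_k\searrow u$ almost everywhere. The normalization $\sup_X u_j=0$ combined with $u_j\in \mathcal{E}(X,\theta,\phi)$ and Theorem \ref{thm: DDL2_E_char} (which gives $P_\theta[u_j]=\phi$) forces $u_j\leq \phi$: indeed $u_j\leq V_\theta$, so $u_j\leq P_\theta(u_j+C,V_\theta)\nearrow P_\theta[u_j]=\phi$. Consequently $u_k\leq v_k\leq \phi\leq 0$, and the monotonicity of total mass in \cite[Theorem 1.2]{WN17} pins $\int_X \theta_{v_k}^n=\int_X\theta_\phi^n$, so $v_k\in \mathcal{E}(X,\theta,\phi)$. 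Applying Lemma \ref{lem: GZ fundamental inequality} to the pair $u_k\leq v_k\leq 0$ then yields $\int_X |v_k-\phi|\theta_{v_k}^n\leq 2^{n+1}A$, placing each $v_k$ in $\mathcal{E}^1(X,\theta,\phi)$ with a uniform energy estimate.

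For the limit, I would first establish $u\in \mathcal{E}(X,\theta,\phi)$ via the envelope characterization: since $P_\theta[v_k]=\phi$ and $v_k\searrow u$, the uniform energy bound together with the domination principle \cite[Proposition 3.11]{DDL2} should force $P_\theta[u]=\phi$, whence $u\in \mathcal{E}(X,\theta,\phi)$ and $\theta_{v_k}^n\to \theta_u^n$ in the appropriate weak sense. Then applying Lemma \ref{lem: GZ fundamental inequality 2} to $u_k, v_k\in \mathcal{E}(X,\theta,\phi)$ gives
\[
\int_X |u_k-\phi|\theta_{v_k}^n \leq 2\int_X|u_k-\phi|\theta_{u_k}^n+2\int_X|v_k-\phi|\theta_{v_k}^n \leq 2A+2^{n+2}A\leq 2^{n+3}A,
\]
and a Fatou-type passage to the limit, exploiting the pointwise chain $\phi-u_k\geq \phi-v_k\nearrow \phi-u$ together with the weak convergence $\theta_{v_k}^n\to \theta_u^n$, delivers the desired bound $\int_X|u-\phi|\theta_u^n\leq 2^{n+3}A$.

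The main obstacle is precisely this limiting step: showing $u\in \mathcal{E}(X,\theta,\phi)$ and justifying the weak convergence $\theta_{v_k}^n\to \theta_u^n$ in the absence of small unbounded locus. The usual variational and integration-by-parts tools from \cite{BBGZ13, DDL2} are unavailable in this generality, so the argument has to be carried out through the envelope machinery of \cite{DDL2} and a careful use of the domination principle; the awkward factor $2^{n+3}=4\cdot 2^{n+1}$ in the conclusion reflects precisely the interplay between the two fundamental inequalities used above.
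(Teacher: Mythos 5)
Your construction of $v_k:=(\sup_{j\geq k}u_j)^*$, the verification that $u_k\leq v_k\leq\phi\leq 0$ and $v_k\in\mathcal E(X,\theta,\phi)$, and the bound $\int_X|v_k-\phi|\theta_{v_k}^n\leq 2^{n+1}A$ via Lemma \ref{lem: GZ fundamental inequality} all match the paper's argument. The gap is in the limiting step, and it sits exactly at the non-trivial point of the lemma: you assert that ``the uniform energy bound together with the domination principle should force $P_\theta[u]=\phi$'', but no mechanism is given, and the domination principle is not the right tool here --- it compares two potentials given an inequality a.e.\ with respect to a Monge-Amp\`ere measure, and says nothing about whether mass escapes along the decreasing limit $v_k\searrow u$. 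A priori $\int_X\theta_u^n$ could drop below $\int_X\theta_\phi^n$, and preventing this is the heart of the statement. Moreover your Fatou step invokes the weak convergence $\theta_{v_k}^n\to\theta_u^n$, which by \cite[Theorem 2.3]{DDL2} requires convergence of the total masses, i.e.\ already knowing $u\in\mathcal E(X,\theta,\phi)$; as written the argument is circular. (The integral bound alone could be salvaged using only the lower-semicontinuity half of that theorem, tested against the bounded quasi-continuous truncations $\min(\phi-v_j,C)$, but the membership claim would still be unproven.)

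The paper closes this gap with the canonical cutoffs $u^C:=\max(u,\phi-C)$, which have the same singularity type as $\phi$ and hence lie in $\mathcal E(X,\theta,\phi)$ automatically. It first proves the lemma for decreasing sequences (with constant $4A$), using Lemma \ref{lem: GZ fundamental inequality 2} to get $\int_X|u_j-\phi|\theta_{u_k}^n\leq 4A$ and then testing the bounded quasi-continuous functions $|\max(u_j,\phi-C)-\phi|$ against $\theta_{u_k}^n$ via the semicontinuity statement of \cite[Theorem 2.3]{DDL2}. Applying this decreasing case to $v_j^C\searrow u^C$ gives $\int_X|u^C-\phi|\theta_{u^C}^n\leq 2^{n+3}A$ uniformly in $C$, whence $\theta_{u^C}^n(\{u\leq\phi-C\})\leq 2^{n+3}A/C\to 0$, and membership $u\in\mathcal E(X,\theta,\phi)$ follows from the criterion \cite[Lemma 3.4]{DDL2}; the final estimate then follows by plurifine locality and monotone convergence. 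Some quantitative substitute of this kind for your ``should force'' step is indispensable.
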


\begin{proof}
First let us assume that $u_j \searrow u$. By Lemma \ref{lem: GZ fundamental inequality 2} we have that 
$$
\int_X |u_j - \phi| \theta_{u_k}^n \leq 2\int_X |u_j-\phi| \theta_{u_j}^n + 2\int_X |u_k - \phi| \theta_{u_k}^n \leq 4 A.
$$
Fixing $C > 0$, since $u_j \leq \max(u_j,\phi-C) \leq \phi$, we arrive at 
$$
\int_X |\max(u_j,\phi-C) - \phi| \theta_{u_k}^n \leq \int_X |u_j - \phi| \theta_{u_k}^n \leq 4 A.
$$
Since $|\max(u_j,\phi-C) - \phi|$ is uniformly bounded and quasi-continuous, we can apply \cite[Theorem 2.3]{DDL2} to conclude that
$$
\int_X |\max(u_j,\phi-C) - \phi| \theta_{u}^n \leq  \liminf_k\int_X |\max(u_j,\phi-C) - \phi| \theta_{u_k}^n  \leq 4 A.
$$
Moreover, we notice that $u_j^C \searrow u^C$, where $u^C_j := \max(u_j,\phi-C)$, and $u^C := \max(u,\phi-C)$, and that $u^C \searrow u$. Letting $j \to +\infty$ and then $C \to \infty$, the monotone convergence theorem implies that $\int_X |u - \phi| \theta_{u}^n\leq 4 A$.

In the general case, when $u_j \to u$ in $L^1$, consider the sequence $v_j := \big(\sup_{k \geq j} u_j\big)^*  \geq u_j$. It is clear that $v_j \searrow u$, hence by Lemma \ref{lem: GZ fundamental inequality} we can conclude that
\begin{equation*}
\sup_j \int_X |v_j - \phi| \theta_{v_j}^n \leq 2^{n+1} \sup_j \int_X |u_j - \phi| \theta_{u_j}^n \leq 2^{n+1} A. 
\end{equation*}

To address that $u \in \mathcal E(X,\theta,\phi)$, we notice that, for $C>0$ fixed, $\phi \geq v^C_j\geq u^C \in \Ec(X,\theta,\phi)$ and $v_j^C$ decreases to $u^C$. Hence, since $\phi\geq v_j^C\geq u_j \in \Ec(X,\theta,\phi)$ we can use the first step and Lemma \ref{lem: GZ fundamental inequality} to conclude that 
\begin{equation}
\label{eq: fund inequality GZ07}
\int_X |u^C - \phi| \theta_{u^C}^n \leq 4 \sup_j\int_X |v_j^C - \phi| \theta_{v_j^C}^n\leq 2^{n+3} \sup_j\int_X |u_j - \phi| \theta_{u_j}^n \leq 2^{n+3}A.
\end{equation}
In particular, this implies that $\int_{\{u \leq \phi-C\}} \theta_{u^C}^n\leq \frac{1}{C}\int_X |u^C-\phi|\theta_{u^C}^n \leq \frac{2^{n+3}A}{C}$. It then follows from \cite[Lemma 3.4]{DDL2} that $u \in \mathcal E(X,\theta,\phi)$. Finally from \eqref{eq: fund inequality GZ07} and the plurifine property we have
$$
\int_{\{u>\phi-C\}} |u^C-\phi|\theta_u^n \leq 2^{n+3}A.
$$
Now, letting $C\to +\infty$ and using the monotone convergence theorem we finish the proof. 
\end{proof}

Finally, we prove an estimate that will be useful in showing that certain equations with prescribed singularity have solutions:

\begin{lemma}\label{lem: energy capacity estimate}
	Let $\phi$ be a model potential with $\int_X \theta_{\phi}^n>0$. Let $u \in \Ec(X,\theta,\phi)$ be such that $\sup_X u=0$, and  let $\mu$ be a positive Borel measure such that $\mu \leq B \capphi$ for some $B>0$. Then
	\[
	\int_X |u-\phi|^2d\mu \leq C  \left(\int_X |u-\phi|\theta_u^n +1\right), 
	\]
	where $C>0$ only depends on $B$, $\theta$ and $\omega$.
\end{lemma}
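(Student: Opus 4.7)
My plan is to translate the capacity bound on $\mu$ into the claimed $L^2$-energy estimate via a layer-cake decomposition, and then establish a relative energy--capacity inequality using the comparison principle of Lemma \ref{lem: DDL2 comparison principle}.

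\emph{Normalization and layer cake.} I first argue that $u\leq\phi\leq 0$, so $|u-\phi|=\phi-u$. Indeed, since $\phi=P_\theta[\phi]$ is a model potential and $u\in\Ec(X,\theta,\phi)$, Theorem \ref{thm: DDL2_E_char} gives $P_\theta[u]=\phi$; together with the sup-normalization $u-\sup_X u\leq P_\theta[u]$ and the hypothesis $\sup_X u=0$, this yields the inequality. Setting $E_s:=\{\phi-u>s\}$, the layer-cake formula gives
\begin{equation*}
\int_X|u-\phi|^2\,d\mu=2\int_0^\infty s\,\mu(E_s)\,ds.
\end{equation*}
The portion over $s\leq 1$ is dominated by $\mu(X)\leq B\capphi(X)\leq B\int_X\theta_\phi^n$, which is absorbed into the final constant.

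\emph{Energy--capacity estimate.} For $s\geq 1$ I will use $\mu(E_s)\leq B\capphi(E_s)$, reducing the task to controlling $\capphi(E_s)$. The plan is to apply Lemma \ref{lem: DDL2 comparison principle} with the rescaled potential $V_s:=u/s+(1-1/s)\phi\in\PSH(X,\theta)$ against a capacity candidate $w$ (i.e. $\phi-1\leq w\leq\phi$). Elementary set computations give $E_{2s}\subseteq\{V_s<w-1\}\subseteq E_s$, and since $P_\theta[V_s]=\phi\geq w-1$, the comparison principle yields
\begin{equation*}
\int_{E_{2s}}\theta_w^n\leq\int_{E_s}\theta_{V_s}^n.
\end{equation*}
Expanding $\theta_{V_s}^n$ by multilinearity of non-pluripolar products produces a weighted sum of mixed masses $\theta_u^k\wedge\theta_\phi^{n-k}$. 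A plurifine-locality argument applied to the envelope $\max(u,\phi-s)$, together with monotonicity of mixed masses, identifies $\int_{E_s}\theta_u^k\wedge\theta_\phi^{n-k}=\int_{E_s}\theta_\phi^n$ for each $k$, so that $\int_{E_s}\theta_{V_s}^n=\int_{E_s}\theta_\phi^n$. A Chebyshev step combined with Lemma \ref{lem: GZ fundamental inequality 2} (which gives $\int_X|u-\phi|\theta_\phi^n\leq 2\int_X|u-\phi|\theta_u^n$) then controls $\int_{E_s}\theta_\phi^n$ by the right-hand side of the claim.

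\emph{Main obstacle.} The main difficulty is producing sufficient decay in $s$. The one-step comparison plus Chebyshev only yields $\capphi(E_s)=O(1/s)$, which is not fast enough for the layer-cake integral to converge. Obtaining the sharper decay will require either iterating the rescaling argument or exploiting the full multinomial structure of $\theta_{V_s}^n$ to squeeze additional factors of $1/s$ from the $k\geq 1$ terms of the expansion. Once a sharp enough energy--capacity estimate is available, substituting it back into the layer-cake expression yields the stated inequality with $C$ depending only on $B$, $\theta$, and $\omega$.
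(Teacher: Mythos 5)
Your skeleton (layer-cake decomposition, the rescaled potential $V_s=s^{-1}u+(1-s^{-1})\phi$, comparison against capacity candidates) is exactly the paper's, but the argument does not close, and the substitute step you propose is incorrect. The claimed identity $\int_{E_s}\theta_u^k\wedge\theta_\phi^{n-k}=\int_{E_s}\theta_\phi^n$ is false in general: plurifine locality identifies $\theta_u^k\wedge\theta_\phi^{n-k}$ with $\theta_{\max(u,\phi-s)}^k\wedge\theta_\phi^{n-k}$ on $\{u>\phi-s\}$, and identifies the latter with $\theta_\phi^n$ on $\{u<\phi-s\}$, but it gives no equality of the two measures on the sublevel set itself (the contact set $\{u=\phi-s\}$ and, more fundamentally, the global mass bookkeeping prevent it). Indeed the two sides have incompatible decay rates: since $\phi$ is a model potential, $\theta_\phi^n$ has bounded density (\cite[Theorem 3.8]{DDL2}), so $\theta_\phi^n(u<\phi-s)$ decays exponentially in $s$, whereas already the $k=n$ term $\theta_u^n(u<\phi-s)$ typically decays only like $o(1/s)$ under the finite-energy hypothesis; in the K\"ahler case $\phi=0$ one easily produces $u\in\Ec(X,\omega)$ with $\omega_u^n(\{u<-s\})$ polynomially small while $\omega^n(\{u<-s\})$ is exponentially small. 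So $\int_{E_s}\theta_{V_s}^n=\int_{E_s}\theta_\phi^n$ fails, and even if it held you correctly observe it would only give $\capphi(E_s)=O(1/s)$, which makes $\int_1^\infty s\,\capphi(E_{2s})\,ds$ divergent. Since you explicitly leave this as an unresolved obstacle, the proof is not complete.

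The missing ingredients are precisely how the paper beats the $O(1/s)$ barrier, and they go in the opposite direction from your comparison. Keep the binomial weights: $\theta_{V_s}^n\le C\,s^{-1}\sum_{k=1}^n\theta_u^k\wedge\theta_\phi^{n-k}+\theta_\phi^n$ for $s>1$. The factor $s^{-1}$ on each mixed term cancels the layer-cake weight $s$, and the partial comparison principle \cite[Corollary 3.16]{DDL2} bounds $\theta_u^k\wedge\theta_\phi^{n-k}(u<\phi-s)\le\theta_u^n(u<\phi-s)$ (comparison with $\theta_u^n$, not with $\theta_\phi^n$), whose integral in $s$ is exactly $\int_X|u-\phi|\theta_u^n$ --- this is where the energy term on the right-hand side comes from. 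The remaining pure term $s\,\theta_\phi^n(u<\phi-s)$ is not handled by Chebyshev plus Lemma \ref{lem: GZ fundamental inequality 2} (that only gives $O(1)$ after multiplying by $s$), but by the exponential estimate $\theta_\phi^n(u<\phi-s)\le A'e^{-as}$, which follows from the bounded density of $\theta_\phi^n$ together with uniform Skoda-type integrability of normalized $\theta$-psh functions (\cite[Theorem 2.50]{GZ17}, \cite[Proposition 2.7]{GZ05}); its weighted integral is then a constant depending only on $\theta$ and $\omega$. With these two inputs your layer-cake reduction does yield the stated bound, but as written your proposal lacks both of them.
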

For the definition of the relative Monge-Amp\`ere capacity $\capphi$ we refer to \eqref{eq: capphi_def}. The proof builds on the arguments of \cite[Lemma 4.18]{DDL2}. 
\begin{proof}
We first express the left-hand side in the following manner:
\begin{flalign*}
\int_X |u-\phi|^2 d\mu& = 2\int_0^{+\infty} t\mu(u<\phi-t)dt = 4\int_0^{+\infty} t\mu(u<\phi-2t)dt\\
& \leq 4 B\int_X \theta_\phi^n + 4B \int_1^{+\infty}t \capphi(u<\phi-2t)dt. 	
\end{flalign*}
		Next, we use the comparison principle to estimate $\capphi(u<\phi-2t), t>1$. It suffices to prove that 
    \begin{equation*}
    	 \int_{1}^{+\infty} t\capphi(u<\phi-2t) dt \leq C\left (\int_X |u-\phi|\theta_u^n +1\right),
    \end{equation*}
    for some uniform constant $C:=C(X,n,\theta,\omega)>0$.   Fix $v\in \PSH(X,\theta)$ such that $\phi-1\leq v \leq \phi$. 
        For each $t>1$ we set $u_t:=t^{-1}u+(1-t^{-1})\phi$.  Observe that the following inclusions hold
	\[
	(u<\phi-2t)=(t^{-1}u + \phi - t^{-1}\phi<\phi-2) \subset (u_t<v-1) \subset (u_t<\phi-1)= (u<\phi-t). 
	\]
 It thus follows from the comparison principle \cite[Corollary 3.6]{DDL2} that 
	\begin{equation}\label{ineq measures}
	\theta_{v}^n(u<\phi-2t) \leq \theta_{v}^n(u_t<v-1) \leq \theta_{u_t}^n (u_t<v-1)\leq \theta_{u_t}^n (u<\phi-t).
	\end{equation}
	Expanding $\theta_{u_t}^n$ we see that 
	\begin{equation}\label{ineq u}
	\theta_{u_t}^n \leq Ct^{-1}\sum_{k=1}^n \theta_{u}^k\wedge \theta_{\phi}^{n-k} +  \theta_{\phi}^n, \ \ \forall t>1, 
	\end{equation}
for a uniform constant $C=C(n)>0$. Since $\theta_{\phi}^n$ has bounded density with respect to Lebesgue measure (see \cite[ Theorem 3.8]{DDL2}), using \cite[Theorem 2.50]{GZ17} we infer that 
\begin{equation}
	\label{eq: GZ 05 CLN} 
	\theta_{\phi}^n (u<\phi-t) \leq A \int_{\{u\leq - t\}} \omega^n \leq    A e^{-at} \int_X  e^{-au}\omega^n\leq A'e^{-at}  , 
\end{equation}
for some uniform constants $a,A, A'>0$ depending only on $n,\theta,\omega,X$. 
  Combining this with \eqref{ineq measures} and \eqref{ineq u} and taking the supremum over all candidates $v$ for the capacity $\capphi$ we get that 
	\begin{eqnarray*}
		\int_1^{\infty} t\capphi(u<\phi-2t)dt &\leq &  \int_1^{\infty} t\theta_{u_t}^n(u<\phi-t)dt\\
        &\leq &  C \int_1^{\infty}  \sum_{k=1}^n \theta_{u}^k\wedge \theta_{\phi}^{n-k}(u<\phi-t)dt + \int_1^{\infty}  t\theta_{\phi}^n(u<\phi-t) dt.
	\end{eqnarray*}
By \eqref{eq: GZ 05 CLN} we have  $\int_1^{\infty}  t\theta_{\phi}^n(u<\phi-t) dt \leq A'\int_1^{\infty}  t e^{-at} dt<+\infty $.
	Using the partial comparison principle \cite[Corollary 3.16]{DDL2} we get 
	\[
	\theta_u^k \wedge \theta_{\phi}^{n-k} (u<\phi-t) \leq \theta_u^n (u<\phi-t), \ \forall k \in \{1,...,n\}.  
	\]
	Combing the last two estimates we finally get the result.
\end{proof}

\subsection{Stability of subsolutions and supersolutions}

Let us consider momentarily the equation $\theta_u^n = \mu, \ \ u \in \textup{PSH}(X,\theta),$
where $\mu$ is a positive non-pluripolar Borel measure. Informally speaking, we say that $v \in \textup{PSH}(X,\theta)$ is a \emph{subsolution} to this equation if $\theta_v^n \geq \mu$. Analogously, we say that $u$ is a \emph{supersolution} if $\theta_u^n \leq \mu$.  In this short subsection we point out that subsolutions/supersolutions are stable under taking certain natural operations. 

It is well known that subsolutions are preserved under taking maximums (in our context see \cite[Lemma 4.27]{DDL2}). In addition to this, the $L^1$-limit of subsolutions is also a subsolution:
\begin{lemma}\label{lem: stability of subsolutions}
	Let $(u_j)$ be a sequence of $\theta$-psh functions such that $\theta_{u_j}^n \geq f_j \mu$, where $f_j \in L^1(X,\mu)$ and $\mu$ is a positive non-pluripolar Borel measure on $X$. Assume that $f_j$ converge in $L^1(X,\mu)$ to $f \in L^1(X,\mu)$,  and $u_j$ converge in $L^1(X,\omega^n)$ to $u\in \PSH(X,\theta)$. Then  $\theta_u^n \geq f \mu$. 
\end{lemma}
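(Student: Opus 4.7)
The plan is to reduce the hypothesis to a decreasing sequence of $\theta$-psh functions and then invoke monotonicity properties of the non-pluripolar Monge--Amp\`ere operator. By Hartogs' lemma, $\sup_X u_j$ is uniformly bounded, so after subtracting constants (which does not change $\theta_{u_j}^n$) we may assume $\sup_X u_j\le 0$. Extracting a subsequence, we may additionally arrange that $f_j\to f$ pointwise $\mu$-a.e.\ and $|f_j|\le F$ for some $F\in L^1(\mu)$.

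Introduce $v_j := (\sup_{k\ge j} u_k)^*\in\PSH(X,\theta)$ and $g_j := \inf_{k\ge j} f_k$. Then $v_j$ decreases quasi-everywhere to $u$, and by dominated convergence $g_j\to f$ both $\mu$-a.e.\ and in $L^1(\mu)$.

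The core step is to show $\theta_{v_j}^n \ge g_j\,\mu$ for every $j$. Fix $j$ and $N\in\NN$, and set $w_{j,N}:=\max(u_j,\ldots,u_{j+N})$. Since $\theta_{u_k}^n\ge f_k\mu\ge g_j\mu$ for every $k\in\{j,\ldots,j+N\}$, iterating the ``max of subsolutions is a subsolution'' principle \cite[Lemma 4.27]{DDL2} yields $\theta_{w_{j,N}}^n\ge g_j\mu$. As $N\to\infty$, $w_{j,N}$ increases quasi-everywhere to $v_j$ with a uniform upper bound, so the standard monotone convergence of non-pluripolar Monge--Amp\`ere measures along such increasing sequences gives $\theta_{v_j}^n\ge g_j\mu$.

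To conclude, let $j\to\infty$ with $v_j\searrow u$. Applying the convergence theorem for non-pluripolar Monge--Amp\`ere measures along decreasing sequences (cf.\ \cite[Theorem 2.3]{DDL2}), the measures $\theta_{v_j}^n$ converge weakly to $\theta_u^n$ up to a residual measure supported in the pluripolar set $\{u=-\infty\}$. For every bounded quasi-continuous $h\ge 0$ on $X$,
\[
\int_X h\,\theta_{v_j}^n \;\ge\; \int_X h\,g_j\,d\mu \;\xrightarrow[j\to\infty]{}\; \int_X h\,f\,d\mu,
\]
and since $\mu$ (hence $f\mu$) does not charge pluripolar sets, the pluripolar residue is orthogonal to $f\mu$, yielding $\int_X h\,\theta_u^n\ge \int_X h\,f\,d\mu$. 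This is equivalent to $\theta_u^n\ge f\mu$ as Borel measures.

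The main obstacle is this last step: mass of $\theta_{v_j}^n$ may concentrate on the pluripolar singular locus of $u$, so the non-pluripolar Monge--Amp\`ere operator is not weakly continuous along general decreasing sequences. The saving structural feature is that $\mu$ is non-pluripolar, making any such escaping mass irrelevant to the target inequality.
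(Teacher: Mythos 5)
Your reduction to $v_j=(\sup_{k\ge j}u_k)^*$ and $g_j=\inf_{k\ge j}f_k$, and the derivation of $\theta_{v_j}^n\ge g_j\mu$ via finite maxima plus the increasing-limit convergence, is fine and matches the first step of the paper's proof (which gets this directly from \cite[Lemma 4.27]{DDL2}). The gap is in your final step. The assertion that $\theta_{v_j}^n$ converges weakly to $\theta_u^n$ \emph{plus a residual measure carried by the pluripolar set} $\{u=-\infty\}$ is not something \cite[Theorem 2.3]{DDL2} provides: along a decreasing sequence that result only gives the semicontinuity $\liminf_j\int_X h\,\theta_{v_j}^n\ge\int_X h\,\theta_u^n$ (and weak convergence only when the total masses converge, which is precisely what is unknown here). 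It is the wrong direction for your purposes, and the structural claim itself is not available in general: when $v_j\searrow u$ the non-pluripolar masses can drop, the "lost" mass sits on the sets $\{v_j\le V_\theta-t_j\}$ which shrink to $\{u=-\infty\}$ only as Borel sets, and nothing forces the defect in the weak limit to be concentrated on a pluripolar set (its support is only controlled by closures, and $\{u=-\infty\}$ can be dense). Since the inequality you pass to the limit reads $\int h(\theta_u^n+R)\ge\int hf\,d\mu$, you genuinely need $R$ to be singular with respect to $f\mu$, so this unproved claim is the whole point; note also that if such a decomposition held in general, the truncation machinery used throughout this theory would be unnecessary.

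The paper circumvents exactly this obstruction by localizing before taking the decreasing limit: fix $t>0$, multiply $\theta_{v_k}^n\ge g_k\mu$ by $\id_{\{u>V_\theta-t\}}$ and use plurifine locality to replace $v_k$ by $v_k^t:=\max(v_k,V_\theta-t)$, obtaining $\theta_{v_k^t}^n\ge \id_{\{u>V_\theta-t\}}g_k\mu$. For fixed $t$ the potentials $v_k^t$ decrease to $u^t=\max(u,V_\theta-t)$ and all have minimal singularities, so \cite[Theorem 2.17]{BEGZ10} applies and yields $\theta_{u^t}^n\ge \id_{\{u>V_\theta-t\}}f\mu$; multiplying once more by $\id_{\{u>V_\theta-t\}}$ and using plurifine locality gives $\id_{\{u>V_\theta-t\}}\theta_u^n\ge\id_{\{u>V_\theta-t\}}f\mu$, and letting $t\to+\infty$ concludes, since neither $\theta_u^n$ nor $f\mu$ charges $\{u=-\infty\}$. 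If you want to salvage your argument, you should insert this truncation step; as written, the decreasing-limit passage does not go through.
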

\begin{proof}
	By extracting a subsequence if necessary, we can assume that $f_j$ converge $\mu$-a.e. to $f$. For each $k$ we set $v_k:= (\sup_{j\geq k} u_j)^*$. Then $v_k$ decreases pointwise to $u$ and \cite[Lemma 4.27]{DDL2} gives
	\[
	\theta_{v_k}^n \geq \left(\inf_{j\geq k} f_j \right) \mu. 
	\]
To explain our notation below, for $t>0$ and a function $g$ we set $g^t:= \max(g,V_{\theta}-t)$. 

Note that $\{ u >V_\theta-t\}\subset \{ v_k >V_\theta-t\}$. Multiplying both sides of the above estimate with $\id_{\{u >V_{\theta}-t\}}$, $t>0$ and using the locality of the complex Monge-Amp\`ere operator with respect to the plurifine topology we arrive at 
	\[
	\theta_{v_k^t}^n \geq \id_{\{u >V_{\theta}-t\}} \left(\inf_{j\geq k} f_j\right) \mu.
	\]
	Note that for $t>0$ fixed $v_k^t$ decreases to $u^t$ all having minimal singularity type. Letting $k\to +\infty$ and using \cite[Theorem 2.17]{BEGZ10}, we obtain 
	\[
	\theta_{u^t}^n \geq \id_{\{u >V_{\theta}-t\}} f \mu,\ t>0.
	\]
  	Again, multiplying both sides with $\id_{\{u >V_{\theta}-t\}}$, $t>0$, and using the locality of the complex Monge-Amp\`ere operator with respect to the plurifine topology we arrive at 
	\[
	\id_{\{u >V_{\theta}-t\}}\theta_{u}^n \geq \id_{\{u >V_{\theta}-t\}} f \mu.
	\]
	Finally, letting $t\to +\infty$ we obtain the result. 
\end{proof}

The minimum of two $\theta$-psh potentials is not $\theta$-psh anymore, but the $P_\theta(\cdot,\cdot)$ operator replaces effectively the role of the pointwise minimum, and we have the following result regarding stability of ``minimums" of supersolutions: 

\begin{lemma}\label{lem: MA of rooftop envelope}
Suppose that $u,v \in \textup{PSH}(X,\theta)$ and $P_\theta(u,v) \in \PSH(X,\theta)$ are such that $\theta_u^n \leq \mu$ and $\theta_v^n \leq \mu$ for some  Borel measure $\mu$. 
Then $\theta_{P_\theta(u,v)}^n\leq \mu$. 
\end{lemma}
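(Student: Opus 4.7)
The plan is to exploit the classical \emph{orthogonality principle} for rooftop envelopes: the non-pluripolar measure $\theta_{P_\theta(u,v)}^n$ charges only the contact set $\{P_\theta(u,v)=u\}\cup\{P_\theta(u,v)=v\}$, and on each branch of this contact set it is controlled by the corresponding Monge-Amp\`ere measure $\theta_u^n$ or $\theta_v^n$. Combined with $\theta_u^n\le\mu$ and $\theta_v^n\le\mu$, this yields the conclusion by summing the two branches.

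Set $P:=P_\theta(u,v)$ and consider the plurifine open set $F:=\{P<u\}$ (it is plurifine open as a level set of a difference of $\theta$-psh functions). On $F$ the defining constraint $P\le u$ is not active, so $P$ must saturate the other constraint $P\le v$ in the sense that $\{P=v\}$ carries the full $\theta_P^n$-mass of $F$; this is the analogue of \cite[Lemma 3.7]{DDL2} for the two-constraint envelope, and combined with plurifine locality of the non-pluripolar Monge-Amp\`ere operator gives
\[
\mathbf{1}_F\,\theta_P^n \;=\; \mathbf{1}_{F\cap\{P=v\}}\,\theta_v^n \;\le\;\mathbf{1}_F\,\mu.
\]
On the complement $F^c=\{P=u\}$, the same localization (with the roles of $u$ and $v$ exchanged) gives $\mathbf{1}_{F^c}\theta_P^n\le \mathbf{1}_{F^c}\theta_u^n\le\mathbf{1}_{F^c}\mu$, and adding the two inequalities completes the proof.

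To make the two displayed identities rigorous in the possibly unbounded setting, I would first pass through a truncation: set $u_k:=\max(u,V_\theta-k)$, $v_k:=\max(v,V_\theta-k)$, and $P_k:=P_\theta(u_k,v_k)$. Each of $u_k,v_k,P_k$ has minimal singularities, so their Monge-Amp\`ere measures are well defined in the local Bedford-Taylor sense, and one checks easily that $P_k\searrow P$: the sequence decreases, dominates $P$ since $u_k\ge u$ and $v_k\ge v$, and its limit is a $\theta$-psh function bounded above by $\min(u,v)$, hence by $P$. The orthogonality/plurifine-locality argument above is standard in the minimal-singularity setting and yields $\theta_{P_k}^n\le \mathbf{1}_{\{P_k=u_k\}}\theta_{u_k}^n+\mathbf{1}_{\{P_k<u_k,\,P_k=v_k\}}\theta_{v_k}^n$. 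On the plurifine open set $\Omega_k:=\{u>V_\theta-k\}\cap\{v>V_\theta-k\}$, plurifine locality identifies $\theta_{u_k}^n$ with $\theta_u^n$ and $\theta_{v_k}^n$ with $\theta_v^n$, so both are $\le\mu$ there.

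The main obstacle, and the step I expect to require the most care, is the passage to the limit $k\to\infty$: one must ensure that the measures $\mathbf{1}_{\Omega_k}\theta_{P_k}^n$ (already known to be dominated by $\mu$ on $\Omega_k$) converge to $\theta_P^n$, so as to transfer the bound from $P_k$ to $P$. The natural way is to identify $\mathbf{1}_{\Omega_k}\theta_{P_k}^n$, up to sets of vanishing non-pluripolar mass, with the approximating measure $\mathbf{1}_{\{P>V_\theta-k\}}\theta_{\max(P,V_\theta-k)}^n$ appearing in the definition \eqref{eq: k_approx_measure} of the non-pluripolar product, via plurifine locality applied on the set where $P$, $P_k$ and $\max(P,V_\theta-k)$ all coincide. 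Once this identification is in place, the monotone limit defining $\theta_P^n$ immediately gives $\theta_P^n\le\mu$, completing the proof.
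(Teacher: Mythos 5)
Your main route is correct, and it is genuinely different from the paper's proof. You decompose $X$ into the plurifine open set $\{P<u\}$ (where $P:=P_\theta(u,v)$), on which \cite[Lemma 3.7]{DDL2} indeed gives $\mathbf{1}_{\{P<u\}}\theta_P^n\le \mathbf{1}_{\{P<u\}\cap\{P=v\}}\theta_v^n\le \mathbf{1}_{\{P<u\}}\mu$, and the contact set $\{P=u\}$, on which you bound $\theta_P^n$ by $\theta_u^n$; summing avoids the double counting on $\{P=u=v\}$ that a blunt use of Lemma 3.7 would produce. The paper avoids the same double counting differently: it replaces $v$ by $v+r$ for a generic $r>0$ with $\mu(\{u=v+r\})=0$ (possible since $r\mapsto\mu(\{u\le v+r\})$ is monotone), so that Lemma 3.7 applied to $\varphi_r=P_\theta(u,v+r)$ yields $\theta_{\varphi_r}^n\le\mu$ outright, and then lets $r\searrow 0$ along $\varphi_r\searrow P$ using \cite[Theorem 2.3]{DDL2}. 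Your version is more direct (no limit in $r$), at the cost of one extra ingredient: the contact-set inequality $\mathbf{1}_{\{\varphi=\psi\}}\theta_\varphi^n\le\mathbf{1}_{\{\varphi=\psi\}}\theta_\psi^n$ for $\varphi\le\psi$, which does hold for non-pluripolar products of arbitrarily singular potentials (truncate by $\max(\cdot,V_\theta-k)$, use the classical bounded case on $\Amp(\theta)$ and plurifine locality, then let $k\to\infty$). The paper needs only Lemma 3.7 plus monotone convergence.

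Two repairs are needed in your write-up. First, on $\{P=u\}$ your justification (``the same localization with the roles of $u$ and $v$ exchanged'') does not deliver the stated bound: Lemma 3.7 restricted to $\{P=u\}$ still carries the extra term $\mathbf{1}_{\{P=u=v\}}\theta_v^n$, which is exactly the overlap you must avoid. What you need there is the contact-set inequality above applied to $P\le u$; with that, the step is fine (and the displayed ``equality'' on $\{P<u\}$ should be an inequality). Second, the truncation fallback is both unnecessary and, as written, broken: Lemma 3.7 of \cite{DDL2} is stated for arbitrary $u,v\in\PSH(X,\theta)$ with $P_\theta(u,v)\not\equiv-\infty$, which is precisely the hypothesis of the present lemma, so no reduction to minimal singularities is required; moreover, the identification of $\mathbf{1}_{\Omega_k}\theta_{P_k}^n$ with $\mathbf{1}_{\{P>V_\theta-k\}}\theta^n_{\max(P,V_\theta-k)}$ fails because $P_k=P_\theta(u_k,v_k)$ need not agree with $\max(P,V_\theta-k)$, nor with $P$, on $\Omega_k$ — envelopes are global, and truncating the obstacles from below can raise the envelope everywhere — so plurifine locality has no suitable coincidence set to act on. If you insist on truncating, handle the limit as the paper does: $P_k\searrow P$, a bound $\theta_{P_k}^n\le\mu$ (or a bound on a plurifine open set, absorbed via a quasi-continuous cutoff as in the proof of Proposition \ref{prop: super solution}), and \cite[Theorem 2.3]{DDL2}.
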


\begin{proof}
By replacing $\mu$ with $\id_{X\setminus P}\mu$, where $P:= \{u=v=-\infty\}$, we can assume that $\mu(P)=0$. Since $\mu(X)<+\infty$, the function $r \to  \mu(\{u \leq v+r \})$ is monotone increasing. Such functions have at most a countable number of discontinuities, hence for almost every $r \geq 0$ we have that $\mu(\{u=v+r\})=0$. 
For such $r$ we set $\varphi_r:=P_{\theta}(\min(u,v+r))$, and note that $\varphi_r \searrow P_\theta(u,v)$ as $r\rightarrow 0$.  
It then follows from \cite[Lemma 3.7]{DDL2} that we can write 
	\[
	\theta_{\varphi_r}^n \leq \mathbbm{1}_{\{\varphi_r=u\}} \theta_u^n+\mathbbm{1}_{\{\varphi_r=v+r\}}\theta_v^n \leq \left( \mathbbm{1}_{\{\varphi_r=u\}} +\mathbbm{1}_{\{\varphi_r=v+r\}} \right) \mu \leq \mu,
	\]
 where in the last inequality we used the fact that $\mu(\{u=v+r\})=0$. Letting $r\searrow 0$, we use \cite[Theorem 2.3]{DDL2} to arrive at the conclusion. 
\end{proof}

\section{The relative Monge-Amp\`ere capacity} \label{sect: relative MA capacity}

We recall the circle of ideas related to the $\phi$-relative Monge-Amp\`ere capacity. This notion has its roots in \cite{DL15,DL17}, and it was treated in detail in \cite{DDL2} under the assumption of small unbounded locus on $\phi$.  

The main result of this section is Theorem \ref{thm: uniform estimate}, which is a significant generalization of Ko{\l}odziej's $L^\infty$ estimate \cite{Ko98} to our relative context, that will help not only with the regularity of the solutions to our equations, but also with showing the solutions exist to begin with. 

We start by introducing the main concepts.
For this we fix $\chi \in \PSH(X,\theta)$.
\begin{definition}
 Let $E$ be a Borel subset of $X$. We define the $\chi$-relative capacity of $E$ as 
\begin{equation}\label{eq: capphi_def}
 \textup{Cap}_\chi(E) := \sup \left \{\int_E \theta_{u}^n \setdef u \in \PSH(X,\theta),
  \ \chi-1\leq u \leq \chi \right \}. 
\end{equation}
Exactly the same proof as \cite[Lemma 4.2]{DDL2} shows that $\textup{Cap}_\chi$ is inner regular, i.e., 
\[
\textup{Cap}_\chi(E) =\sup \{\textup{Cap}_\chi(K)\setdef K\subset E\ ; \ K \ \textrm{is compact}\}.
\]
Moreover it is elementary to see that $\textup{Cap}_\chi$ is continuous along increasing sequences, i.e., if $\{E_j\}_j$ increases to $E$ then 
\[
\textup{Cap}_\chi (\cup E_j) = \lim_{j} \textup{Cap}_\chi(E_j). 
\]
In particular, if $\psi$ is a quasi-psh function then the function $t\mapsto \textup{Cap}_\chi(\psi <\chi-t)$ is right-continuous on $\mathbb{R}$. This is an important  ingredient in proving analogs of  Ko{\l}odziej's $L^{\infty}$ estimate in this context (see Theorem \ref{thm: uniform estimate} below). \\
The \emph{relative} $\chi$-\emph{extremal function} of $E$ is defined as 
 \[
 h_{E,\chi}:= \sup \{u \in \PSH(X,\theta) \setdef 
 u \leq \chi-1\ \textrm{on} \ E\ \textrm{and} \ u \leq \chi \ \textrm{on}\ X\}.
 \]
 The \emph{global} $\chi$-\emph{extremal function} of $E$ is defined as 
 \[
 V_{E,\chi} := \sup \{ u \in \textup{PSH}(X,\theta,\chi) \setdef u\leq \chi \ \textrm{on} \ E 
 \}. 
 \]
\end{definition}
We set $M_\chi(E):=\sup_X V_{E,\chi}^*$, where $V_{E,\chi}^*$ denotes the upper semicontinuous regularization of $V_{E,\chi}$. 
The Alexander-Taylor capacity is then defined as $T_\chi (E):= \exp(-M_\chi(E))$.

\vspace{2mm}

By a word for word adaptation of the proof of \cite[Lemma 4.3]{DDL2} we obtain that sets with zero capacity are small:

\begin{lemma}\label{lem: zero_cap} Let $B \subset X$ be a Borel set. Then $\textup{Cap}_\chi(B)=0$ if and only if $B$ is pluripolar.
\end{lemma}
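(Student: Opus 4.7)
The ``if'' direction is immediate from the construction of non-pluripolar products recalled in \cite[Section 1.2]{BEGZ10}: these measures put no mass on pluripolar sets. Hence for every candidate $u\in \PSH(X,\theta)$ with $\chi-1\leq u \leq \chi$ one has $\theta_u^n(B)=0$, and taking the supremum yields $\capchi(B)=0$.

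For the converse, the plan is to copy the strategy of \cite[Lemma 4.3]{DDL2}, arguing by contrapositive. Suppose $B$ is not pluripolar. By the inner regularity of $\capchi$ noted immediately after its definition, it is enough to exhibit a non-pluripolar compact $K\subset B$ with $\capchi(K)>0$; the existence of such a $K$ follows from a standard Choquet capacitability argument. I would then work with the global $\chi$-extremal function $V_{K,\chi}$. Since $K$ is non-pluripolar, its upper semicontinuous regularization $V_{K,\chi}^{*}$ is a bona-fide element of $\PSH(X,\theta,\chi)$, so $M:=M_\chi(K)<+\infty$.

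The technical core is an Alexander-Taylor-type bound relating $\capchi(K)$ and $M$: forming the $\theta$-psh convex combination $w_t:=(1-t)\chi + t(V_{K,\chi}^{*}-M)$ with $t=1/(M+1)$ produces an admissible test function bracketed between $\chi-1$ and $\chi$, and multilinearity of the non-pluripolar product together with the concentration of $\theta_{V_{K,\chi}^{*}}^n$ on $K$ extracts a term of order $t^n$ from $\int_K \theta_{w_t}^n$, giving a strictly positive lower bound on $\capchi(K)$. The principal obstacle, as in \cite{DDL2}, is that $\chi$ may have a dense unbounded locus, so Bedford-Taylor orthogonality and the usual continuity along monotone sequences are not directly available. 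I would handle this exactly as in \cite[Lemma 4.3]{DDL2}, by first truncating via $\max(\,\cdot\,,V_\theta-k)$, running the argument in the bounded setting, and then sending $k\to+\infty$ using the monotone convergence \eqref{eq: k_approx_measure} defining the non-pluripolar product. Since all these steps are plurifine-local and do not require a small unbounded locus assumption, the adaptation is indeed word-for-word.
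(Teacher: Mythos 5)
Your ``if'' direction is fine and is the same one-line observation the paper intends. For the converse, note first that the paper's own proof is nothing more than the assertion that the argument of \cite[Lemma 4.3]{DDL2} adapts verbatim; what you sketch instead is essentially a re-derivation of the Alexander--Taylor comparison that the paper records later as Lemma \ref{lem: compcap}, and as written it has genuine gaps. First, admissibility of $w_t=(1-t)\chi+t(V_{K,\chi}^{*}-M)$: the upper bound $w_t\le\chi$ amounts to $V_{K,\chi}^{*}\le\chi+M_\chi(K)$, and the paper justifies this inequality (beginning of Section \ref{sect: relative MA capacity}.2) only for a \emph{model} potential, using $P_\theta[\phi]=\phi$; for the general $\chi\in\PSH(X,\theta)$ of the statement one only gets $V_{K,\chi}^{*}-M\le P_\theta[\chi]$, which need not be $\le\chi+C$ for any finite $C$ when $\chi$ is not of model type, so your test function need not lie in the band $[\chi-1,\chi]$. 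Second, positivity: your lower bound is $t^n\int_K\theta^n_{V^*_{K,\chi}}$, and to conclude it is strictly positive you need both the concentration of $\theta^n_{V^*_{K,\chi}}$ on $K$ (Lemma \ref{lem: global extremal contact}, proved only for model $\chi=\phi$ with $\int_X\theta_\phi^n>0$) and $\int_X\theta^n_{V^*_{K,\chi}}=\int_X\theta_\chi^n>0$. You never identify where non-vanishing mass enters; observe that if $\int_X\theta_\chi^n=0$ then every candidate $u$ with $\chi-1\le u\le\chi$ has $\int_X\theta_u^n=0$ by \cite[Theorem 1.2]{WN17}, so $\capchi\equiv 0$, and the nontrivial direction is simply false without the mass hypothesis implicit in the DDL2 setting; any proof must use it somewhere. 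Finally, $M_\chi(K)<+\infty$ for non-pluripolar $K$ is asserted rather than argued; it does hold, but via comparison with the classical extremal function (any candidate $u$ satisfies $u\le\sup_X\chi$ on $K$, hence $u-\sup_X\chi\le V^*_{K,V_\theta}$, which is bounded since $K$ is non-pluripolar), and a line to this effect is needed.

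Your closing claim that the dense unbounded locus is handled ``word-for-word'' by truncating with $\max(\cdot,V_\theta-k)$ is also misplaced for the route you chose: the concentration statement you rely on is precisely one of the points where the present paper could \emph{not} copy \cite{DDL2} --- it is proved here (Lemma \ref{lem: global extremal contact}) via the new balayage Lemma \ref{lemma: balayage} together with the projection $P_\theta[\phi](\cdot)$, exactly because the truncations $\max(u,V_\theta-k)$ are less singular than $\chi$ and therefore are candidates neither for $V_{K,\chi}$ nor for $\capchi$; a truncate-and-pass-to-the-limit scheme does not reproduce the needed localization of mass on $K$. In summary: if you restrict to $\chi=\phi$ model with $\int_X\theta_\phi^n>0$ (the only case the paper actually uses) and quote Lemma \ref{lem: global extremal contact} together with the inequality $\phi\le V^*_{K,\phi}\le\phi+M_\phi(K)$ from Section \ref{sect: relative MA capacity}.2, your construction does yield $\capphi(K)\ge (M+1)^{-n}\int_X\theta_\phi^n>0$ and hence the lemma; but that is a different and heavier argument than the paper's, and at the stated level of generality the two steps singled out above are real gaps.
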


In similar spirit, we mention that $M_\chi(B)=+\infty$ implies that $\textup{Cap}_\chi(B)=0$. Indeed this is a consequence of \cite[Lemma 4.8]{DDL2}. 

\subsection{A relative version of Ko{\l}odziej's estimate}

In the next theorem, we give a significant generalization of Kolodziej's $L^\infty$ estimate. Though the main line of the proof is similar to the one in \cite{BEGZ10}, the statement will be flexible enough to help us with proving both the existence and regularity of solutions to equations with prescribed singularity. 

\begin{theorem}\label{thm: uniform estimate} 
	Fix $a\in [0,1),A>0$, $\chi \in \PSH(X,\theta)$ and  $0\leq f \in L^p(X,\omega^n)$ for some $p>1$. Assume that  $u\in \PSH(X,\theta)$, normalized by $\sup_X u=0$, satisfies 
	\begin{equation}
		\label{eq: volume cap domination 0}
		\theta_u^n \leq f\omega^n + a\theta_{\chi}^n.
	\end{equation}
	Assume also that  
	\begin{equation}
		\label{eq: volume cap domination}
		\int_E f\omega^n \leq A [\capacity_{\chi}(E)]^2,
	\end{equation}
	for every Borel subset $E\subset X$. If $P[u]$ is less singular than $\chi$ then 
$$\chi -\sup_X \chi- C\Big(\|f\|_{L^p},p,(1-a)^{-1},A\Big) \leq u.$$
\end{theorem}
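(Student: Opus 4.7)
The approach I propose is the Kolodziej--De~Giorgi iteration scheme adapted to the relative setting, with one novel ingredient exploiting the assumption $a<1$. After replacing $\chi$ by $\chi - \sup_X \chi$ (the hypotheses and conclusion are invariant under this shift), it suffices to establish $u \geq \chi - C$ under the extra normalization $\sup_X \chi = 0$. Set $g(t) := \capchi(\{u < \chi - t\})$ for $t \geq 0$, a right-continuous non-increasing function on $[0,\infty)$.

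First I would establish the basic capacity estimate via comparison. For a candidate $v \in \PSH(X,\theta)$ with $\chi - 1 \leq v \leq \chi$ and $s \in (0,1)$, the convex combination $w := sv + (1-s)\chi$ satisfies $\chi - s \leq w \leq \chi$, giving the inclusions $\{u < \chi - t - s\} \subset \{u < w - t\} \subset \{u < \chi - t\}$. Since $w - t$ has the same singularity type as $\chi$ and $P[u]$ is less singular than $\chi$, Lemma~\ref{lem: DDL2 comparison principle} applies to yield $\int_{\{u < w-t\}} \theta_w^n \leq \int_{\{u < w-t\}} \theta_u^n$. Retaining only $\theta_w^n \geq s^n \theta_v^n$ from the binomial expansion, combining with \eqref{eq: volume cap domination 0} and \eqref{eq: volume cap domination}, and taking the supremum over all such $v$, one obtains
\begin{equation*}
s^n g(t+s) \;\leq\; A\, g(t)^2 \;+\; a\!\int_{\{u<\chi-t\}} \theta_\chi^n, \qquad s \in (0,1),\ t > 0.
\end{equation*}

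The key step is absorbing the residual $a\theta_\chi^n$ term. Apply Lemma~\ref{lem: DDL2 comparison principle} once more, this time directly to the pair $(u,\chi - t)$: since $\theta_{\chi - t} = \theta_\chi$, together with \eqref{eq: volume cap domination 0} and \eqref{eq: volume cap domination} this yields $\int_{\{u<\chi-t\}} \theta_\chi^n \leq A g(t)^2 + a \int_{\{u<\chi-t\}} \theta_\chi^n$, and the assumption $a<1$ allows us to rearrange to $\int_{\{u<\chi-t\}} \theta_\chi^n \leq \tfrac{A}{1-a} g(t)^2$. Plugging this back into the previous display produces the clean self-improving inequality $s^n g(t+s) \leq B g(t)^2$ with $B := A/(1-a)$, valid for all $s \in (0,1)$ and $t > 0$. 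A Kolodziej-style chain iteration (set $s_k := (2B g(t_k))^{1/n}$ and $t_{k+1} := t_k + s_k$; inductively $g(t_k) \leq 2^{-k} g(t_0)$, while $\sum_k s_k \leq (2B g(t_0))^{1/n}/(1 - 2^{-1/n}) < \infty$) then gives $g(t_\infty) = 0$ at finite $t_\infty$, provided $g(t_0) \leq 1/(2B)$. To locate such a $t_0$ quantitatively, I would invoke the $L^p$ hypothesis: H\"older's inequality combined with Skoda integrability for $u$ (normalized by $\sup_X u = 0$, so $|\{u < -t\}| \leq C e^{-\alpha t}$) gives $\int_{\{u<\chi-t\}} f\omega^n \leq \|f\|_p\, C^{1/q} e^{-\alpha t/q}$, which combined with the bound on $\int\theta_\chi^n$ above produces exponential decay $g(t) \leq C' e^{-\beta t}$ with the desired explicit dependence on $\|f\|_p$, $p$, $(1-a)^{-1}$, and $A$. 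Finally, Lemma~\ref{lem: zero_cap} shows that $\{u < \chi - t_\infty\}$ is pluripolar, and the domination principle (\cite[Proposition~3.11]{DDL2}) applied to $u$ and $\chi - t_\infty$ upgrades this to the pointwise estimate $u \geq \chi - t_\infty$.

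The main obstacle is the absorption step: without $a < 1$, the residual $a\int \theta_\chi^n$ term in the basic capacity estimate produces a linear-in-$g(t)$ contribution, and the resulting iteration yields at best geometric decay rather than the finite-time vanishing needed to conclude a pointwise inequality. A secondary subtlety is the quantitative starting time $t_0$, which is the only place where the $L^p$ integrability of $f$ is actively used; for the iteration proper it is the capacity-volume hypothesis \eqref{eq: volume cap domination} with exponent $2$ that does the heavy lifting.
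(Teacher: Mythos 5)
Your proposal is correct and follows essentially the same route as the paper: the comparison principle (Lemma \ref{lem: DDL2 comparison principle}) applied to $sv+(1-s)\chi$, absorption of the $a\theta_\chi^n$ term using $a<1$ (the paper absorbs it into $\int\theta_u^n$ rather than $\int\theta_\chi^n$, which is algebraically the same), the resulting inequality $s^n\capchi(u<\chi-t-s)\leq \frac{A}{1-a}[\capchi(u<\chi-t)]^2$, a De Giorgi-type iteration (which the paper outsources to \cite[Lemma 2.4]{Eyssidieux_Guedj_Zeriahi_JAMS_2009}), and a quantitative starting time $t_0$ obtained from H\"older plus uniform integrability of normalized quasi-psh functions. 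The only cosmetic deviations are your use of Skoda-type sublevel-set decay instead of the paper's $L^q$-moment bound from \cite[Proposition 2.7]{GZ05}, and your final upgrade via Lemma \ref{lem: zero_cap} and the domination principle where the paper simply passes from an almost-everywhere to an everywhere inequality for quasi-psh functions.
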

\begin{proof}
By adding a constant to $\chi$ we can assume that $\sup_X \chi=0$. 
For $t>0$ we set 
$$
g(t):= [\capchi(u<\chi-t)]^{1/n}.
$$ 
Let $s\in [0,1]$ and suppose $v\in \PSH(X,\theta)$ satisfies $\chi-1\leq v\leq \chi$. Since $P[u]$ is less singular than $\chi$, the comparison principle (Lemma \ref{lem: DDL2 comparison principle})  gives 
\begin{eqnarray*} 
	s^n\int_{\{u<\chi-t-s\}} \theta_v^n & \leq  & s^n \int_{\{u<(1-s)\chi + sv -t\}} \theta_v^n \leq \int_{\{u<(1-s)\chi + sv -t\}} \theta_{(1-s)\chi +sv}^n \\
	&\leq & \int_{\{u<(1-s)\chi + sv -t\}} \theta_{u}^n \leq  \int_{\{u<\chi-t\}} \theta_u^n,
\end{eqnarray*} 
hence taking supremum over all candidates $v$ we arrive at 
\begin{equation}\label{eq: cap estimate 1}
	s^n\capchi(u<\chi-t-s)  \leq \int_{\{u<\chi-t\}} \theta_u^n. 
\end{equation}

For each $t>0$, since $P[u]$ is less singular than $\chi$, the comparison principle (Lemma \ref{lem: DDL2 comparison principle}) and the assumption \eqref{eq: volume cap domination 0} give
\begin{eqnarray*}
\int_{\{u<\chi-t\}}  \theta_u^n  & \leq & \int_{\{u<\chi-t\}} f\omega^n + a \int_{\{u<\chi-t\}} \theta_{\chi}^n \leq \int_{\{u<\chi-t\}} f\omega^n + a \int_{\{u<\chi-t\}} \theta_{u}^n.
\end{eqnarray*}
Since $a\in [0,1)$ we thus get $$\int_{\{u<\chi-t\}}  \theta_u^n \leq \frac{1}{1-a}  \int_{\{u<\chi-t\}} f\omega^n.$$
Combining this with \eqref{eq: cap estimate 1} we get
\begin{equation}\label{eq: cap estimate 3}
	s^n \capchi(u<\chi-t-s)  \leq \frac{1}{1-a}\int_{\{u<\chi-t\}} f\omega^n.
\end{equation} 
Therefore, combining  \eqref{eq: volume cap domination} with \eqref{eq: cap estimate 3}  we obtain
\begin{equation*}
	s^n\capchi (u<\chi-t-s)    \leq \frac{A}{1-a}  [ \capchi(u<\chi-t)]^2,
\end{equation*}
which implies
\[
sg(t+s) \leq B g^2(t), \ \forall t > 0, \forall s \in [0,1], 
\]
where $B=(A/(1-a))^{1/n}$. As we have already pointed out in the beginning of this section, $g :\mathbb{R}^+ \rightarrow \mathbb{R}^+$ is a decreasing right-continuous function and from \eqref{eq: cap estimate 3} we see that $g(+\infty)=0$. Also by an application of H\"older's inequality and \cite[Proposition 2.7]{GZ05} there is a constant $t_0>0$ depending only on $a, p, \|f\|_p$ such that 
\begin{equation}
\label{eq unif t}
\int_{\{u<\chi-t_0\}} f\omega^n \leq \int_{\{u<\chi-t_0\}} \frac{|\chi-u|}{t_0} f\omega^n\leq  \frac{\|f\|_p}{t_0} \left ( \int_X |u-\chi|^q \omega^n\right )^{\frac{1}{q}} \leq \frac{1-a}{(2B)^n}, 
\end{equation}
where $q>1$ is the conjugate exponent of $p$. In the last line above both $u$ and $\max(u,\chi)$ satisfy $\sup_X u =0, \sup_X \max(u,\chi)=0$, hence by \cite[Proposition 2.7]{GZ05} the constant $t_0$ can be chosen to be only dependent on $X,\theta,\omega,p, \|f\|_p, (1-a)^{-1}, B$ (but not on $u$ and $\chi$). 

It then follows from \eqref{eq: cap estimate 3} and \eqref{eq unif t} that $g(t_0+1) \leq (2B)^{-1}$. Hence from \cite[Lemma 2.4 and Remark 2.5]{Eyssidieux_Guedj_Zeriahi_JAMS_2009} it follows that $g(t_0+3)=0$. We finally conclude that $u\geq \chi-t_0-3$ almost everywhere on $X$, hence everywhere as desired. 
\end{proof}

\subsection{$\textup{Cap}_\phi$ with model potential $\phi$}

In order to use $\textup{Cap}_\chi$ in an effective manner, additional assumptions need to be made on the potential $\chi$. As in \cite{DDL2}, in this section we assume that $\chi := \phi$, where $\phi$ is a model potential and has non-collapsing mass:
$$
P[\phi]=\phi \ \textup{ and } \ \int_X \theta_{\phi}^n >0. 
$$

For elementary reasons $h_{E,\phi}^*$ is a $\theta$-psh function on $X$ which has the same singularity type as $\phi$, in fact $\phi-1\leq h_{E,\phi}^*\leq \phi$. A similar conclusion holds for  $V_{E,\phi}^*$ if $E$ is non-pluripolar, more precisely:
\begin{equation*}
\phi \leq V_{E,\phi}^*\leq \phi + M_\phi(E).  
\end{equation*}
Indeed, the first estimate is trivial, while for the second one we notice that every candidate potential of $V^*_{E,\phi} - M_\phi(E)$ is non-positive and more singular than $\phi$. Hence the supremum of all these potentials has to be less than $P[\phi]=\phi$.


\begin{lemma}\label{lemma: balayage}
Let $u\in \PSH(X, \theta)$. Let $B\subset X$ be a small  ball whose closure is contained in $\Amp(\{\theta\})$, and let $g$ be a local potential of $\theta$ in a neighborhood of $\overline{B}$. Then there exists $\hat{u}\in \psh(X, \theta)$ such that $\hat{u}=u $ on $X\setminus B$, $\hat{u}\geq u$ on $X$, $\sup_X \hat{u} \leq \sup_X u + {\rm osc}_B(g)$, and $\theta_{\hat{u}}^n(B)=0$. Moreover, if $u\leq v$ then $\hat{u}\leq \hat{v}$.
\end{lemma}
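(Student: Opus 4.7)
The plan is to define $\hat u$ as the $\theta$-psh balayage of $u$ on $B$. Namely, set
$$\mathcal{F}:=\{\psi\in\PSH(X,\theta)\setdef \psi\leq u\ \textrm{on}\ X\setminus B\},\qquad \hat u:=(\sup\mathcal{F})^*,$$
and verify the five claimed properties in turn.

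\emph{Admissibility and sup bound.} Since $\theta=i\ddbar g$ on a neighborhood of $\overline B$, for any $\psi\in\mathcal{F}$ the function $\psi+g$ is psh near $\overline B$, and the maximum principle gives
$\sup_B(\psi+g)\leq \sup_{\partial B}(\psi+g)\leq \sup_X u+\sup_{\overline B}g$, hence $\sup_B\psi\leq \sup_X u+\mathrm{osc}_{\overline B}(g)$. Thus $\mathcal{F}$ is locally bounded above, so $\hat u\in\PSH(X,\theta)$ and $\sup_X\hat u\leq \sup_X u+\mathrm{osc}_B(g)$. The bound $\hat u\geq u$ is immediate since $u\in\mathcal{F}$, and the monotonicity $u\leq v\Rightarrow \hat u\leq\hat v$ follows from the inclusion of envelope families.

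\emph{Equality on $X\setminus B$.} On the open set $X\setminus \overline B$ every $\psi\in\mathcal{F}$ satisfies $\psi\leq u$ pointwise, so $\sup\mathcal{F}\leq u$; taking usc regularization and using upper semicontinuity of $u$ gives $\hat u\leq u$ there, hence equality. On $\partial B$, approaching $x$ from within $X\setminus\overline B$ together with usc of $u$ gives $\hat u(x)\leq u(x)$, and equality follows again from $\hat u\geq u$.

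\emph{Vanishing of $\theta_{\hat u}^n$ on $B$.} This is the main obstacle. First, the envelope $\hat u$ is \emph{maximal} on $B$: given $B'\Subset B$ and $w\in\PSH(X,\theta)$ with $w\leq\hat u$ on $X\setminus B'$, the function equal to $\max(w,\hat u)$ on $B'$ and to $\hat u$ off $B'$ is $\theta$-psh and lies in $\mathcal{F}$, hence is $\leq\hat u$, forcing $w\leq\hat u$ on $B'$. If $\hat u$ were locally bounded on $B$, Bedford--Taylor theory would immediately yield $(\theta+i\ddbar\hat u)^n=0$ on $B$. In general, the plan is to approximate by $u_k:=\max(u,V_\theta-k)$ and form the corresponding balayage $\hat u_k$ for each $k$; since $\hat u_k\geq u_k\geq V_\theta-k$ and $V_\theta$ is locally bounded on $\Amp(\{\theta\})\supset\overline B$, each $\hat u_k$ is bounded on a neighborhood of $\overline B$, and Bedford--Taylor yields $\theta_{\hat u_k}^n(B)=0$ in both the classical and the non-pluripolar sense. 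The envelope definition together with the monotonicity already proved show that $\hat u_k\searrow \hat u$, and by plurifine locality of the non-pluripolar Monge-Amp\`ere operator applied to the bounded truncations $\max(\hat u_k,V_\theta-m)$ (which coincide with $\hat u_k$ as soon as $k\leq m$), one passes to the limit to conclude $\theta_{\hat u}^n(B)=0$.

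The whole argument hinges on having enough locally bounded $\theta$-psh functions on a neighborhood of $\overline B$ to truncate $\hat u$ into the Bedford--Taylor regime; this is exactly what the hypothesis $\overline B\subset\Amp(\{\theta\})$ supplies through $V_\theta$.
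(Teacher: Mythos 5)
Your construction (truncate by $u_k=\max(u,V_\theta-k)$, balayage the bounded functions, pass to the limit) is the same architecture as the paper's, but the step where you actually pass to the limit in $\theta_{\hat u_k}^n(B)=0$ has a genuine gap. Plurifine locality by itself transfers no information between the different functions $\hat u_k$ and $\hat u$; and your bookkeeping with the truncations $\max(\hat u_k,V_\theta-m)$ breaks exactly where it is needed: these coincide with $\hat u_k$ only for $k\le m$, while to reach $\max(\hat u,V_\theta-m)$ (whose measures define the non-pluripolar product of $\hat u$) you must let $k\to\infty$ with $m$ fixed, i.e.\ $k>m$. In that range, on the plurifine open set $\{\hat u_k<V_\theta-m\}$ one has $\theta^n_{\max(\hat u_k,V_\theta-m)}=\theta^n_{V_\theta}$ there, and $\theta^n_{V_\theta}$ in general does charge $B$ (already for $\theta$ K\"ahler, $V_\theta=0$ and $\theta^n_{V_\theta}=\theta^n$), so the measures you are passing to the limit no longer vanish on $B$. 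What is actually needed is a convergence/semicontinuity theorem for non-pluripolar Monge--Amp\`ere measures along decreasing sequences: this is precisely how the paper concludes, defining $\hat u:=\lim_k\hat u_k$ (which, by your own envelope argument, is your $\hat u$) and invoking \cite[Theorem 2.3]{DDL2} to get $\int_X\chi\,\theta^n_{\hat u}\le\liminf_k\int_X\chi\,\theta^n_{\hat u_k}=0$ for every continuous $\chi\ge 0$ supported in $B$. Your truncation scheme can be salvaged, but only by inserting bounded quasi-continuous cutoffs of the type $\max(\hat u-V_\theta+s,0)/(\max(\hat u-V_\theta+s,0)+\delta)$ (which kill the region $\{\hat u_k\le V_\theta-m\}$) and then again appealing to a convergence theorem for Monge--Amp\`ere measures tested against bounded quasi-continuous functions --- i.e.\ the same DDL2 ingredient; ``plurifine locality'' alone does not do it.

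A secondary flaw: your proof that $\hat u=u$ on $\partial B$ is not valid as written. The usc regularization $\hat u(x)=\limsup_{y\to x}(\sup\mathcal F)(y)$ also sees values of the envelope from inside $B$, so approaching $x$ only from within $X\setminus\overline B$ proves nothing. To control the approach from inside $B$ you need the boundary regularity of the ball: compare $\psi+g$, for each candidate $\psi$, with the upper Perron solution of the Dirichlet problem in $B$ with usc boundary data $(u+g)|_{\partial B}$, and use that at a regular boundary point the upper solution has limsup at most the boundary value. This is the classical barrier argument behind ``the classical balayage method'' that the paper cites; note that your identification $\lim_k\hat u_k=\hat u$ (needed for the main step above) also relies on this boundary equality, since it is what makes the decreasing limit a candidate in your family $\mathcal F$.
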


\begin{proof}
First assume that $[u]=[V_\theta]$. Then $u$ is bounded on a neighborhood of $\overline{B}$ contained inside $\textup{Amp}\{\theta\}$, hence the classical balayage method gives $\hat u \in \textup{PSH}(X,\theta)$ satisfying the required properties. This construction is monotonic in the sense that $u\leq v$ implies $\hat{u}\leq \hat{v}$.

Now assume that $u$ has arbitrary singularity type, and let $u_k := \max(u,V_\theta-k)$. Then $u_k$ decreases to $u$, and consequently $\hat u_k$ decreases to some $\hat u \in \textup{PSH}(X,\theta)$ for which  $\hat{u}=u $ on $X\setminus B$, $\hat{u}\geq u $ on $X$, and $\sup_X \hat{u} \leq \sup_X u + {\rm osc}_B(g)$.  

Lastly, by \cite[Theorem 2.3]{DDL2} we have that
$$\liminf_{k\rightarrow +\infty} \int_X\chi \theta^n_{\hat{u}^k} \geq \int_X\chi \theta^n_{\hat{u}}, $$
for all positive continuous functions $\chi: X \to \Bbb R$. This gives that $\theta_{\hat{u}}^n(B)=0$. 
\end{proof}

The following result was proved in \cite[Lemma 4.4]{DDL2} when $\phi$ has small unbounded locus. As we now show, this assumption is unnecessary: 
\begin{lemma}\label{lem: relative extremal contact set}
  If $E$ is a Borel set then $\theta_{h^*_{E,\phi}}^n$ vanishes in the open set $\{h^*_{E,\phi}<0\} \setminus \bar{E}$. 
\end{lemma}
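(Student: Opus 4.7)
Set $u := h^*_{E,\phi}$. Since the non-pluripolar Monge-Amp\`ere measure puts no mass on the pluripolar set $X \setminus \Amp(\theta)$, it suffices to produce, for each $x_0 \in (\{u<0\} \setminus \bar E) \cap \Amp(\theta)$, an open neighborhood $B$ on which $\theta_u^n(B) = 0$. Upper semicontinuity of $u$ (together with the closedness of $\bar E$) lets me choose a small ball $B \ni x_0$ with $\bar B \subset \Amp(\theta) \setminus \bar E$ and $\sup_{\bar B} u < 0$.

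The plan is to adapt the argument of \cite[Lemma 4.4]{DDL2} by using Lemma \ref{lemma: balayage} in place of the classical balayage (thereby avoiding the small unbounded locus hypothesis) and restoring the candidacy property via the rooftop envelope. Applying Lemma \ref{lemma: balayage} to $u$ on $B$ yields $\hat u \in \PSH(X,\theta)$ with $\hat u \geq u$, $\hat u = u$ on $X \setminus B$, and $\theta_{\hat u}^n(B) = 0$. The balayage $\hat u$ may fail to lie below $\phi$, but the rooftop envelope $v := P_\theta(\hat u, \phi)$ does: $v \leq \phi$ by construction and $v \leq \hat u = u \leq \phi - 1$ on $E \subset X \setminus B$, so $v$ is a candidate for $h_{E,\phi}$, giving $v \leq u$; conversely $u \leq \min(\hat u, \phi)$ forces $u \leq v$. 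Hence $u = P_\theta(\hat u, \phi)$.

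Next, the decomposition of \cite[Lemma 3.7]{DDL2} (the main ingredient behind the proof of Lemma \ref{lem: MA of rooftop envelope}) yields $\theta_u^n \leq \mathbbm{1}_{\{u=\hat u\}}\theta_{\hat u}^n + \mathbbm{1}_{\{u=\phi\}}\theta_\phi^n$. The first term vanishes on $B$ because $\theta_{\hat u}^n(B)=0$, so $\theta_u^n(B) \leq \theta_\phi^n(\{u=\phi\} \cap B)$.

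The remaining step, which I expect to be the main technical obstacle, is to show $\theta_\phi^n(\{u=\phi\} \cap B) = 0$. For this I would exploit that $\phi$ is a model potential: applying the same envelope decomposition to $P_\theta(\phi+C, V_\theta)$ and passing to the limit $C \to \infty$ (using that $P_\theta(\phi+C, V_\theta) \nearrow \phi$ and that the free region of each envelope grows to cover $\{\phi < V_\theta\}$) shows that $\theta_\phi^n$ is supported on $\{\phi = V_\theta\}$; repeating the argument for $V_\theta = P_\theta(0)$ further restricts the support of $\theta_{V_\theta}^n$ to $\{V_\theta = 0\}$. Combining these via the plurifine locality of the complex Monge-Amp\`ere operator, $\theta_\phi^n$ is effectively supported on $\{\phi = V_\theta\} \cap \{V_\theta = 0\}$. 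A point of $\{u=\phi\} \cap B$ carrying $\theta_\phi^n$-mass would therefore satisfy $u = \phi = V_\theta = 0$, contradicting $\sup_{\bar B} u < 0$. Hence $\theta_\phi^n(\{u=\phi\} \cap B)=0$ and the conclusion $\theta_u^n(B) = 0$ follows.
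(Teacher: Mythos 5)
Your plan follows the paper's general strategy (balayage on a small ball away from $\bar E$, an envelope operation to restore the constraint, then the plurifine decomposition of the Monge--Amp\`ere measure), but there is a genuine gap at the step where you identify $u:=h^*_{E,\phi}$ with $P_\theta(\hat u,\phi)$. To get $P_\theta(\hat u,\phi)\le u$ you argue that $v:=P_\theta(\hat u,\phi)$ is a candidate for $h_{E,\phi}$ because $v\le \hat u=u\le \phi-1$ on $E$. But $u$ is the upper semicontinuous regularization of $h_{E,\phi}$: the inequality $u\le \phi-1$ holds on $E$ only outside a negligible (pluripolar) subset of $E$, not everywhere (already for $E$ pluripolar one expects $h^*_{E,\phi}$ to be much larger than $\phi-1$ on $E$). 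So $v$ need not be a candidate, and the identity $u=P_\theta(\hat u,\phi)$, on which everything after Step 2 rests, is unproved. Moreover this is not a routine patch in the relative setting: the classical fix (replace $v$ by $(1-\varepsilon)v+\varepsilon\psi$ with $\psi=-\infty$ on the bad pluripolar set) is unavailable because candidates must stay below $\phi$, i.e.\ be more singular than $\phi$, and such perturbations destroy this; equivalently, the statement ``removing a pluripolar set from $E$ does not change $h^*_{E,\phi}$'' would itself require proof here. Note also that you only obtain $v\le u$ off $B$, while the decomposition must be applied on $B$, so weakening the claim does not help.

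The paper avoids this entirely by never applying balayage to $h^*_{E,\phi}$ itself: via Choquet's lemma it takes an increasing sequence of honest candidates $u_j$ (normalized so that $u_j=\phi-1$ on $E$ pointwise and $\phi-1\le u_j\le\phi$), balayages each one on $B$ using Lemma \ref{lemma: balayage}, and projects by $P_\theta[\phi](\hat u_j)$ rather than $P_\theta(\hat u_j,\phi)$. This has two advantages: by \cite[Theorem 3.8]{DDL2} the measure of the projection on $B$ is controlled by $\theta_{\hat u_j}^n(B)=0$ alone, so the extra term $\mathbbm{1}_{\{u=\phi\}}\theta_\phi^n$ (your Step 5) never appears; and the projection is squeezed between $\phi-1$ and $\phi$, hence is a genuine candidate equal to $\phi-1$ on $E$, so the increasing limit recovers $h^*_{E,\phi}$ and \cite[Theorem 2.3]{DDL2} (continuity of the Monge--Amp\`ere operator along increasing sequences) concludes. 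For what it is worth, your Steps 3--5 are sound in principle: the reduction to $\Amp(\theta)$ is fine since $X\setminus\Amp(\theta)$ is pluripolar, and the concentration of $\theta_\phi^n$ on $\{\phi=0\}$ for a model potential does follow from \cite[Theorem 3.8]{DDL2} together with the known concentration of $\theta_{V_\theta}^n$ on $\{V_\theta=0\}$, rather than needing your limiting argument. But as written the proof does not go through without repairing Step 2, and the natural repair is the paper's Choquet-sequence argument.
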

\begin{proof}
  By Choquet's lemma there exists an increasing sequence $(u_j)$ of $\theta$-psh functions on $X$ such that $u_j\geq \phi-1$ on $X$, $u_j= \phi-1$ on $E$, $u_j\leq \phi$, and $(\lim_{j\to +\infty}u_j)^*=h_{E,\phi}^*$. If $B$ is a (very) small ball whose closure is contained in the open set $U:= \{h^*_{E,\phi}<0\}\cap \Amp(\theta) \setminus \bar{E}$ then by Lemma \ref{lemma: balayage} below there exists an increasing sequence $\widehat{u}_j$ of $\theta$-psh functions on $X$ with the following properties \vspace{0.15cm}:\\ 
\noindent(a) $0\geq \widehat{u}_j=u_j$ on $X\setminus B$, $0\geq  \widehat{u}_j \geq u_j$ on $X$,\\
\noindent (b) $\theta_{\widehat{u}_j}^n=0$ in $B$. \vspace{0.15cm}\\
Observe that by construction $\widehat{u}_j\geq  \phi-1$ on $X$ but it may be strictly less singular than $\phi$ and will not contribute to the definition of $h_{E,\phi}$.
To get around this difficulty we introduce the following functions
  \[
  v_j:= P_{\theta}[\phi](\widehat{u}_j):= \left( \lim_{C\to +\infty} P_{\theta}(\phi+C,\widehat{u}_j) \right)^*. 
  \]
 It follows from \cite[Theorem 3.8]{DDL2} that  $\theta_{v_j}^n(B) \leq \theta_{\widehat{u}_j}^n(B)=0$. Also,   
 $$\phi -1= P_{\theta}[\phi](\phi-1)\leq v_j\leq P_{\theta}[\phi](V_\theta)= \phi. $$ Thus $v_j$ has the same singularity type as $\phi$, and $v_j=\phi-1$ on $E$. Hence $v_j$ contributes to the definition of $h_{E,\phi}$.  We also have that $v_j \geq P_{\theta}[\phi](u_j)=u_j$. Therefore, $v_j$ is an increasing sequence of $\theta$-psh functions such that $(\lim_{j} v_j)^* = h_{E,\phi}^*$. Then \cite[Theorem 2.3]{DDL2} yields $\theta_{h^*_{E,\phi}}^n(B)=0$ as desired. 
\end{proof}

Lemma \ref{lemma: balayage} plays an important role in the proof of the next lemma as well:

\begin{lemma}\label{lem: global extremal contact}
  If $E$ is a non-pluripolar Borel set then $\theta_{V_{E,\phi}^*}^n$ vanishes in $X\setminus \overline{E}$. 
\end{lemma}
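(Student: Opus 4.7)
The plan is to mimic closely the proof of Lemma \ref{lem: relative extremal contact set}, with $V_{E,\phi}$ in place of $h_{E,\phi}$. By Choquet's lemma, there exists an increasing sequence $u_j\in \PSH(X,\theta,\phi)$ with $u_j\leq \phi$ on $E$ and $(\lim_j u_j)^*=V_{E,\phi}^*$. The bound $\phi\leq V_{E,\phi}^*\leq \phi+M_\phi(E)$ recorded above (which uses that $E$ is non-pluripolar, hence $M_\phi(E)<+\infty$) ensures that each $u_j$ is more singular than $\phi$ and that $\sup_X u_j \leq M_\phi(E)$. Fix a small ball $B$ whose closure is contained in the open set $\Amp(\theta)\setminus \overline{E}$, and let $g$ be a local potential of $\theta$ in a neighborhood of $\overline{B}$. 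Applying Lemma \ref{lemma: balayage} to each $u_j$ yields an increasing sequence $\widehat{u}_j\in \PSH(X,\theta)$ with $\widehat{u}_j=u_j$ on $X\setminus B$, $\widehat{u}_j\geq u_j$ on $X$, a uniform upper bound $\sup_X \widehat{u}_j\leq M_\phi(E)+\textup{osc}_B(g)=:K$, and $\theta_{\widehat{u}_j}^n(B)=0$.

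As in the previous lemma, the $\widehat{u}_j$ may fail to be more singular than $\phi$, so I would set $v_j:=P_\theta[\phi](\widehat{u}_j)$. Three properties hold. First, since $u_j$ is more singular than $\phi$, $P_\theta[\phi](u_j)=u_j$, and the monotonicity of $P_\theta[\phi]$ gives $v_j\geq u_j$. Second, the uniform bound $\widehat{u}_j\leq K$ together with the translation identity $P_\theta[\phi](V_\theta+c)=P_\theta[\phi](V_\theta)+c=\phi+c$ implies $v_j\leq \phi+K'$ for some uniform $K'$, so $v_j\in \PSH(X,\theta,\phi)$. Third, since $B\cap E=\emptyset$, $v_j\leq \widehat{u}_j=u_j\leq \phi$ on $E$. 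Thus $v_j$ is a competitor in the definition of $V_{E,\phi}$, and in particular $v_j\leq V_{E,\phi}^*$. Moreover, \cite[Theorem 3.8]{DDL2} yields $\theta_{v_j}^n(B)\leq \theta_{\widehat{u}_j}^n(B)=0$.

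Since both the balayage and $P_\theta[\phi]$ are monotone, the sequence $v_j$ is increasing, and the sandwich $u_j\leq v_j\leq V_{E,\phi}^*$ forces $(\lim_j v_j)^*=V_{E,\phi}^*$. Then \cite[Theorem 2.3]{DDL2} implies that $\theta_{v_j}^n$ converges weakly to $\theta_{V_{E,\phi}^*}^n$, so the lower semicontinuity of $\mu\mapsto \mu(B)$ on the open set $B$ gives $\theta_{V_{E,\phi}^*}^n(B)\leq \liminf_j \theta_{v_j}^n(B)=0$. Covering $\Amp(\theta)\setminus \overline{E}$ by countably many such balls, and using that the non-pluripolar Monge-Amp\`ere measure does not charge the pluripolar set $X\setminus \Amp(\theta)$, we conclude that $\theta_{V_{E,\phi}^*}^n$ vanishes on $X\setminus \overline{E}$. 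The delicate step is precisely the correction $\widehat{u}_j\mapsto v_j$: the balayage a priori destroys the singularity comparison with $\phi$, and what makes the envelope $P_\theta[\phi]$ simultaneously restore membership in $\PSH(X,\theta,\phi)$ and preserve the vanishing of the Monge-Amp\`ere mass on $B$ is exactly the combination of the model property $P_\theta[\phi]=\phi$ with \cite[Theorem 3.8]{DDL2}.
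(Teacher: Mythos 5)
Your proof is correct and follows essentially the same route as the paper: balayage on a small ball $B\Subset \Amp(\theta)\setminus\overline{E}$, correction by the projection $P_\theta[\phi]$ to restore membership in $\PSH(X,\theta,\phi)$ while keeping $\theta_{v_j}^n(B)=0$ via \cite[Theorem 3.8]{DDL2}, and then monotone convergence of the Monge--Amp\`ere measures. The only (cosmetic) difference is that the paper first replaces $u_j$ by $\max(u_j,\phi)$ so that $\phi\leq v_j\leq \phi+C$ is immediate, whereas you recover the needed bounds from $v_j\geq u_j$, the uniform estimate $\sup_X\widehat{u}_j\leq M_\phi(E)+\operatorname{osc}_B(g)$ and the translation identity for $P_\theta[\phi]$.
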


\begin{proof}
Since $E$ is non-pluripolar, $V_{E,\phi}^*$ is a $\theta$-psh function (as explained above). 
  By Choquet's lemma there exists an increasing sequence $(u_j)$ of $\theta$-psh functions on $X$ having the same singularity type as $\phi$ such that $u_j= \phi$ on $E$ and $(\lim_{j}u_j)^*=V_{E,\phi}^*$. By taking $\max(u_j,\phi)$ we can assume that $u_j\geq \phi$. Fix an open ball $B$ contained in the open set $U:= \Amp(\theta) \setminus \overline{E}$. By Lemma \ref{lemma: balayage} there exists an increasing sequence $\widehat{u}_j$ of $\theta$-psh functions on $X$ with the following properties: \vspace{0.15cm}\\
\noindent (a) $\widehat{u}_j=u_j$ on $X\setminus B$, $\widehat{u}_j \geq u_j$ on $X$,\\
\noindent (b) $\theta_{\widehat{u}_j}^n=0$ in $B$. \vspace{0.15cm}\\
Observe that by construction $\widehat{u}_j\geq  \phi$ on $X$, $\widehat{u}_j=\phi$ on $E$, but $\widehat{u}_j$ may be strictly less singular than $\phi$ and might not contribute to the definition of $V_{E,\phi}$. We will instead consider the projection $v_j:= P_{\theta}[\phi](\widehat{u}_j)$. 
 It follows from \cite[Theorem 3.8]{DDL2} that  $\theta_{v_j}^n(B)=0$. Since $\phi=P[\phi]$ and $\hat{u}_j\geq \phi$, it follows that $v_j= P_{\theta}[\phi](\hat{u}_j)$ has the same singularity type as $\phi$ and $v_j\geq \phi$. In addition to this, since $\hat{u}_j=u_j=\phi$ on $E$, $v_j$ contributes to the definition of $V_{E,\phi}$, implying that $v_j\leq V_{E,\phi}$.  Recall that $u_j\leq v_j$ and $(\lim u_j)^* =V_{E,\phi}^*$. Therefore, $v_j$ is an increasing sequence of $\theta$-psh functions such that $(\lim_{j} v_j)^* = V_{E,\phi}^*$. Lastly, \cite[Theorem 2.3]{DDL2} yields that $\theta_{V^*_{E,\phi}}^n(B)=0$. 
\end{proof}

The proof of the following proposition carries over from \cite[Theorem 4.5]{DDL2}:

\begin{prop}\label{prop: capi formula compact}
	If  $K$ is a compact subset of $X$ and $h:=h_{K,\phi}^*$ then 
	\[
	\capphi(K) =\int_K \theta_{h}^n =\int_X (\phi-h) \theta_h^n. 
	\]
\end{prop}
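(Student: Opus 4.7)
The plan is to follow the proof of \cite[Theorem 4.5]{DDL2} and check that it carries over to our setting. The only place where a small-unbounded-locus hypothesis was invoked in that proof is in establishing the contact-set property of the relative extremal function, and this has already been generalized in the present section as Lemma \ref{lem: relative extremal contact set}; all other ingredients (the comparison principle, plurifine locality, non-charging of pluripolar sets by non-pluripolar Monge-Amp\`ere measures) are purely pluripotential-theoretic and available in our setting. By adding a constant, I may assume $\sup_X \phi = 0$, so $\phi \leq 0$ and the extremal function $h := h_{K,\phi}^*$ satisfies $\phi - 1 \leq h \leq \phi \leq 0$ (the lower bound holds because $\phi - 1$ is itself a candidate in the sup defining $h$). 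Moreover, standard envelope considerations give $h = \phi - 1$ quasi-everywhere on $K$.

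First I would prove $\int_K \theta_h^n = \int_X (\phi - h)\theta_h^n$ by splitting the right-hand side over $K$ and $X \setminus K$. On $K$ we have $\phi - h = 1$ quasi-everywhere and $\theta_h^n$ does not charge pluripolar sets, so this piece contributes exactly $\int_K \theta_h^n$. Over $X \setminus K$, I would exploit the inclusion $\{h < \phi\} \cap (X \setminus K) \subset \{h < 0\} \setminus K$, valid because $\phi \leq 0$, on which $\theta_h^n$ vanishes by Lemma \ref{lem: relative extremal contact set}; on the complementary set $\{h = \phi\}$ the integrand $\phi - h$ vanishes identically, so this contribution is zero.

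For the first equality $\capphi(K) = \int_K \theta_h^n$, the inequality $\capphi(K) \geq \int_K \theta_h^n$ is immediate since $h$ itself is admissible in the supremum defining $\capphi(K)$. For the reverse inequality, given any candidate $u \in \PSH(X,\theta)$ with $\phi - 1 \leq u \leq \phi$, the key observation is that $u - 1$ satisfies $u - 1 \leq \phi - 1$ on $K$ and $u - 1 \leq \phi$ on $X$, so $u - 1$ is admissible in the sup defining $h_{K,\phi}$, giving $u - 1 \leq h$ and hence $|u - h| \leq 1$ on $X$. Since $u$ and $h$ share the singularity type of $\phi$, I would apply the comparison principle of Lemma \ref{lem: DDL2 comparison principle} to $h + t$ versus $u$ for $t \in (0,1)$, obtaining $\int_{\{u > h + t\}} \theta_u^n \leq \int_{\{u > h + t\}} \theta_h^n$; letting $t \to 0^+$ and invoking plurifine locality to handle the coincidence set $\{u = h\} \cap K$ yields $\int_K \theta_u^n \leq \int_K \theta_h^n$. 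The main technical point is this limit step together with the plurifine-locality bookkeeping around $\{u = h\}$, which is exactly where \cite[Theorem 4.5]{DDL2} performs the delicate argument; since our Lemma \ref{lem: relative extremal contact set} supplies the sole missing ingredient, the proof transfers without modification.
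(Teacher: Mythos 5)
Your second equality and the easy bound $\capphi(K)\geq\int_K\theta_h^n$ are fine, but the reverse inequality --- the heart of the proposition --- has a genuine gap. Comparing $h+t$ with $u$ only yields $\int_{\{u>h+t\}}\theta_u^n\leq\int_{\{u>h+t\}}\theta_h^n$, and letting $t\to0^+$ controls $\theta_u^n$ only on $\{u>h\}$. But $K$ is \emph{not} contained in $\{u>h\}$ modulo a pluripolar set: on $K$ one only knows $u\geq\phi-1=h$ quasi-everywhere, and equality may hold on essentially all of $K$. For the admissible candidate $u=\phi-1$ (which must be handled, since it contributes $\int_K\theta_\phi^n$ to the capacity and the inequality $\int_K\theta_\phi^n\leq\int_K\theta_h^n$ is a nontrivial special case of the statement), the set $\{u>h+t\}$ is empty and your comparison is vacuous. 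Nor can ``plurifine locality'' rescue the coincidence set $\{u=h\}\cap K$: locality applies to plurifine open sets such as $\{u>h\}$ or $\{u<h\}$, not to contact sets, and two non-pluripolar Monge--Amp\`ere measures need not agree, or even be comparable, on a set where the potentials merely coincide. So the key step of your plan does not transfer the argument of \cite[Theorem 4.5]{DDL2}; it replaces it by an inequality that misses exactly the mass one needs to estimate.

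The standard remedy (and what the cited proof does) is to \emph{scale} rather than shift: for $t\in(0,1)$ set $u_t:=(1-t)\phi+tu\in\PSH(X,\theta)$. Then $u_t\geq\phi-t>\phi-1=h$ quasi-everywhere on $K$, so $K\subset\{h<u_t\}$ up to a pluripolar set, while $u_t\leq\phi$ forces $\{h<u_t\}\subset\{h<\phi\}\subset\{h<0\}$. Since $\theta_{u_t}^n\geq t^n\theta_u^n$, Lemma \ref{lem: DDL2 comparison principle} (with $P[h]=P[\phi]=\phi$ less singular than $u_t$) gives $t^n\int_K\theta_u^n\leq\int_{\{h<u_t\}}\theta_{u_t}^n\leq\int_{\{h<u_t\}}\theta_h^n\leq\int_K\theta_h^n$, the last inequality because $\theta_h^n$ vanishes on the open set $\{h<0\}\setminus K$ by Lemma \ref{lem: relative extremal contact set}; letting $t\to1$ yields $\int_K\theta_u^n\leq\int_K\theta_h^n$ for every candidate $u$, hence $\capphi(K)\leq\int_K\theta_h^n$. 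Note the two requirements this construction satisfies simultaneously --- the comparison region contains $K$ quasi-everywhere \emph{and} sits inside $\{h<0\}$ --- and that any shift-by-$t$ variant violates one of them. A minor point: your normalization ``$\sup_X\phi=0$'' is unnecessary and would in general destroy the model property of $\phi$; one simply uses that a model potential satisfies $\phi=P[\phi]\leq0$ automatically.
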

As an application of the previous result, we note the following corollary:
\begin{coro}
	\label{cor: ext capacity of compact sets}
	If $(K_j)$ is a decreasing sequence of compact sets then 
	\[
	\capphi(K) =\lim_{j\to +\infty} \capphi(K_j),
	\]
    where $K:= \bigcap_j K_j$.
	In particular, for any compact set $K$ we have 
	\[
	\capphi(K) =\inf\{\capphi(U) \setdef K\subset U\subset X\ ; \ U \ \textrm{is open in}\ X\}.
	\]
\end{coro}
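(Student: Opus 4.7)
The plan is to sandwich $L := \lim_j \capphi(K_j)$ between $\capphi(K)$ and itself by constructing a limit relative extremal function. Set $h_j := h^*_{K_j,\phi}$. Since $K_{j+1}\subset K_j$ weakens the defining constraint ``$u\leq \phi-1$ on $K_j$'', the sequence $(h_j)$ is increasing in $j$, and I let $h := (\sup_j h_j)^*\in \PSH(X,\theta)$. The uniform bounds $\phi-1\leq h_j\leq \phi$ pass to the limit, so $h$ shares the singularity type of $\phi$; consequently $h$ and every $h_j$ lie in $\Ec(X,\theta,\phi)$ with the common total mass $\int_X \theta_\phi^n$.

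I would then use Proposition \ref{prop: capi formula compact} to rewrite $\capphi(K_j) = \int_{K_j}\theta_{h_j}^n$. For each fixed $m$ and $j\geq m$, monotonicity gives $\int_{K_j}\theta_{h_j}^n\leq \int_{K_m}\theta_{h_j}^n$. Invoking \cite[Theorem 2.3]{DDL2} for the monotone sequence $h_j\nearrow h$, together with the equality of total masses, yields full weak convergence $\theta_{h_j}^n\to \theta_h^n$: the lower semicontinuity applied both to a nonnegative continuous test $\chi$ and to $C-\chi$ (for $C$ large) forces equality of the integrals on both sides. Since $\id_{K_m}$ is upper semicontinuous, approximating it from above by continuous functions yields $\limsup_j \int_{K_m}\theta_{h_j}^n\leq \int_{K_m}\theta_h^n$, hence $L\leq \int_{K_m}\theta_h^n$ for every $m$. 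Letting $m\to+\infty$ and applying dominated convergence ($\id_{K_m}\searrow \id_K$ since $K_m\searrow K$) gives $L\leq \int_K\theta_h^n$. The closing observation is immediate: since $h$ satisfies $\phi-1\leq h\leq \phi$, it is itself a competitor in the definition of $\capphi(K)$, so $\int_K\theta_h^n\leq \capphi(K)$; combined with the trivial bound $\capphi(K)\leq L$ from monotonicity, this gives $L=\capphi(K)$.

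The main obstacle I anticipate is promoting the one-sided bound of \cite[Theorem 2.3]{DDL2} into two-sided weak convergence of $\theta_{h_j}^n$; this is exactly where the standing assumption that $\phi=P[\phi]$ is a model potential with $\int_X\theta_\phi^n>0$ enters, because it forces all $h_j$ and $h$ into the full-mass class $\Ec(X,\theta,\phi)$, making the total masses coincide. For the ``in particular'' statement, I would use a standard squeeze: for $U_m := \{x\in X : d(x,K)<1/m\}$ with $d$ a fixed distance on $X$, the closures $\overline{U_m}$ form a decreasing sequence of compacts with $\bigcap_m\overline{U_m}=K$, so by the first part $\capphi(\overline{U_m})\to \capphi(K)$. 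Since $U_m\subset \overline{U_m}$, monotonicity gives $\inf_U\capphi(U)\leq \capphi(U_m)\leq \capphi(\overline{U_m})\to \capphi(K)$, while the reverse inequality $\inf_U\capphi(U)\geq \capphi(K)$ is immediate from monotonicity, completing the proof.
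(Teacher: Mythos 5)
Your proposal is correct and follows essentially the same route as the paper: the increasing relative extremal functions $h_j=h^*_{K_j,\phi}$, the identity $\capphi(K_j)=\int_{K_j}\theta_{h_j}^n$ from Proposition \ref{prop: capi formula compact}, weak convergence $\theta_{h_j}^n\to\theta_h^n$ (which the paper obtains by citing \cite[Theorem 2.3 and Remark 2.4]{DDL2}, while you re-derive it from the one-sided bound plus equal total masses), upper semicontinuity on the compact $K_m$, and finally $\int_K\theta_h^n\leq\capphi(K)$ since $h$ is a competitor. Your treatment of the ``in particular'' statement via the shrinking neighborhoods $\{d(\cdot,K)<1/m\}$ is also the same squeeze the paper performs with compacts containing $K$ in their interior.
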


\begin{proof}Let $h_j:=h^*_{K_j,\phi}$ be the relative extremal function of $(K_j,\phi)$. Then  $(h_j)$ increases almost everywhere to $h\in \psh(X,\theta)$ which satisfies $\phi-1\leq h\leq \phi$, since $\phi-1\leq h_j\leq \phi$.  Using the continuity of the Monge-Amp\`ere measure along monotonic sequences (see \cite[Theorem 2.3 and Remark 2.4]{DDL2}) we have that $\theta_{h_j}^n$ converges weakly to $\theta_h^n$. Fix $k\in \mathbb{N}$.
Since $K_k$ is compact it follows that 
 \[
 \theta_{h}^n ( K_k)\geq \limsup_{j\to +\infty} \theta_{h_j}^n( K_k).
 \]
 It then  follows from Proposition \ref{prop: capi formula compact}   that, for $k\in \mathbb{N}$ fixed,
\begin{flalign*}
	\lim_{j\to +\infty}\capphi(K_j) =  \lim_{j\to +\infty} \int_{K_j} \theta_{h_j}^n \leq \limsup_{j\to +\infty} \int_{K_k} \theta_{h_j}^n \leq  \int_{K_k} \theta_h^n. 
\end{flalign*}
Letting $k\to +\infty$ we conclude that $\lim_{j\to +\infty}\capphi(K_j)\leq \int_K \theta_h^n \leq  \capphi(K).$ Since the reverse inequality is trivial, this gives the proof of the first statement. 

	To prove the last statement, let $(K_j)$ be a decreasing sequence of compact sets such that $K$ is contained in the interior of $K_j$ for all $j$. Then by the first part of the corollary we have that 
	\begin{eqnarray*}
	\capphi(K) =\lim_{j\to +\infty} \capphi(K_j)&\geq & \lim_{j\to +\infty} \capphi(\textrm{Int}(K_j)) \\
    &\geq & \inf\{\capphi(U) \setdef K\subset U\subset X\ ; \ U \ \textrm{is open in}\ X\},
	\end{eqnarray*}
    hence equality.
\end{proof}

The Alexander-Taylor and Monge-Amp\`ere capacities are related by the following estimates, whose proof carries over from \cite[Lemma 4.9]{DDL2}:

\begin{lemma}\label{lem: compcap} Suppose $K \subset X$ is a compact subset and $\textup{Cap}_\phi(K)>0$. Then we have
$$1\leq\bigg(\frac{\int_X \theta_\phi^n}{\textup{Cap}_\phi(K)}\bigg)^{1/n}\leq \max(1,M_\phi(K)).$$
\end{lemma}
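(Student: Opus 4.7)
The plan is to establish the two inequalities separately. The left inequality is immediate: any candidate $u$ in the definition of $\capphi(K)$ satisfies $\phi-1\leq u\leq \phi$, so $u$ has the same singularity type as $\phi$, and \cite[Theorem 1.2]{WN17} yields $\int_X \theta_u^n=\int_X \theta_\phi^n$. Since $\int_K\theta_u^n\leq \int_X \theta_u^n$, taking the supremum gives $\capphi(K)\leq \int_X\theta_\phi^n$, which is equivalent to the stated lower bound on the ratio.

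For the upper bound, I will build an explicit competitor for $\capphi(K)$ out of the global extremal function $V^*_{K,\phi}$. Setting $M:=\max(1,M_\phi(K))$, I consider
\[
u:=\Big(1-\frac{1}{M}\Big)\phi+\frac{1}{M}V^*_{K,\phi}-\frac{1}{M}.
\]
As a convex combination of two $\theta$-psh functions shifted by a constant, $u$ lies in $\PSH(X,\theta)$, and the bounds $\phi\leq V^*_{K,\phi}\leq \phi+M_\phi(K)\leq \phi+M$ (recorded immediately after the definition of $M_\phi$) force $\phi-1\leq u\leq \phi$, so $u$ is a legitimate competitor. Using the multilinearity of the non-pluripolar product and discarding all but the purely $V^*_{K,\phi}$ term in the expansion of $\theta_u^n$ yields
\[
\theta_u^n\geq \frac{1}{M^n}\,\theta_{V^*_{K,\phi}}^n.
\]
By Lemma \ref{lem: global extremal contact}, $\theta_{V^*_{K,\phi}}^n$ is supported on $\overline{K}=K$, and since $V^*_{K,\phi}$ has the same singularity type as $\phi$, \cite[Theorem 1.2]{WN17} gives $\int_K \theta_{V^*_{K,\phi}}^n=\int_X\theta_\phi^n$. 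Combining these, $\capphi(K)\geq \int_K\theta_u^n\geq M^{-n}\int_X\theta_\phi^n$, which rearranges to the desired upper bound.

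The main technical subtlety is ensuring that $V^*_{K,\phi}$ is a genuine $\theta$-psh function with $M_\phi(K)<+\infty$, so that the construction above is well-defined. This follows from the standing hypothesis $\capphi(K)>0$, which via Lemma \ref{lem: zero_cap} forces $K$ to be non-pluripolar. No other substantive obstacle is anticipated: the contact-set lemma, the model identity $P[\phi]=\phi$, and the mass monotonicity have all been established earlier in the section.
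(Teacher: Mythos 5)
Your overall strategy---bounding $\capphi(K)$ from above by the total mass via \cite{WN17}, and from below by testing the capacity with a rescaled copy of $V^*_{K,\phi}$ whose Monge--Amp\`ere measure sits on $K$---is exactly the standard argument (the one of \cite[Lemma 4.9]{DDL2}, to which the paper defers), and the supporting ingredients you invoke (non-pluripolarity of $K$ via Lemma \ref{lem: zero_cap}, finiteness of $M_\phi(K)$, Lemma \ref{lem: global extremal contact}, multilinearity of the non-pluripolar product, and the equality of masses for potentials of the same singularity type) are all used correctly. However, there is a concrete error in the normalizing constant of your competitor. With $M=\max(1,M_\phi(K))$ and $u=(1-\tfrac1M)\phi+\tfrac1M V^*_{K,\phi}-\tfrac1M$, the recorded bounds only give
\[
u\le \Big(1-\tfrac1M\Big)\phi+\tfrac1M\big(\phi+M_\phi(K)\big)-\tfrac1M=\phi+\tfrac{M_\phi(K)-1}{M},
\]
which exceeds $\phi$ wherever $V^*_{K,\phi}$ gets close to $\phi+M_\phi(K)$, as soon as $M_\phi(K)>1$---that is, precisely in the only nontrivial case. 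So, as written, $u$ need not satisfy $u\le\phi$, it is not a legitimate candidate in the definition of $\capphi(K)$, and the claim that the bounds ``force $\phi-1\le u\le\phi$'' is false.

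The repair is immediate: subtract $1$ (equivalently $M_\phi(K)/M\le 1$) instead of $1/M$, i.e.\ take $u:=(1-\tfrac1M)\phi+\tfrac1M V^*_{K,\phi}-1$. Then $u\le\phi+\tfrac{M_\phi(K)}{M}-1\le\phi$ and $u\ge\phi-1$, so $u$ is a genuine competitor; since the additive constant does not affect the inequality $\theta_u^n\ge M^{-n}\theta_{V^*_{K,\phi}}^n$, the rest of your computation goes through verbatim and yields $\capphi(K)\ge M^{-n}\int_X\theta_\phi^n$, which is the desired upper bound. With this one-line correction your proof coincides with the intended one.
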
 

Lastly we point out that any measure with $L^{1+\varepsilon}$ density is dominated by the relative capacity. The proof of this result also carries over verbatim from \cite[Proposition 4.30]{DDL2}: 

\begin{prop}\label{prop: capLp} 
Let $f \in L^p(X,\omega^n), \ p >1$ with $f \geq 0$.
Then there exists $C>0$ depending only on $\theta,\omega,p,X,n$ and $\|f\|_{L^p}$ such that 
$$\int_E f \omega^n \le \frac{C}{\big(\int_X \theta_\phi^n\big)^2} \cdot \textup{Cap}_\phi (E)^2$$
for all  Borel sets $E  \subset X$. 
\end{prop}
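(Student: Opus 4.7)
The plan is to follow the classical Kolodziej-style volume-capacity comparison, adapted to the relative setting via the Alexander--Taylor inequality of Lemma \ref{lem: compcap}. The overall structure is: reduce to compact sets, split according to the size of $\textup{Cap}_\phi(K)$ relative to $\int_X \theta_\phi^n$, and in the nontrivial regime convert the lower bound on the Alexander--Taylor constant $M_\phi(K)$ into super-exponentially small Lebesgue volume via a uniform Skoda estimate, then absorb the decay into a polynomial in $\textup{Cap}_\phi(K)$.

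First, using inner regularity of $\textup{Cap}_\phi$ together with inner regularity of the measure $f\omega^n$, I reduce the inequality to compact subsets $K\subset X$. For such $K$ there are two cases. If $\textup{Cap}_\phi(K)\geq \int_X \theta_\phi^n$, then $\textup{Cap}_\phi(K)^2/(\int_X\theta_\phi^n)^2\geq 1$, and the inequality is immediate from $\int_K f\omega^n\leq \|f\|_{L^1}\leq \|f\|_{L^p}\|\omega^n\|^{1/q}$.

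In the nontrivial case $\textup{Cap}_\phi(K)<\int_X\theta_\phi^n$, Lemma \ref{lem: compcap} gives the key estimate
\[
M_\phi(K)\;\geq\;\Bigl(\frac{\int_X\theta_\phi^n}{\textup{Cap}_\phi(K)}\Bigr)^{1/n}.
\]
Now set $u:=V_{K,\phi}^*-M_\phi(K)$. By the remark preceding Lemma \ref{lemma: balayage}, $u\in \textup{PSH}(X,\theta)$, satisfies $\sup_X u\leq 0$ (since $V_{K,\phi}^*\leq \phi+M_\phi(K)\leq M_\phi(K)$ as $\phi\leq 0$), and on $K$ we have $u\leq \phi - M_\phi(K)\leq -M_\phi(K)$. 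The uniform Skoda integrability theorem (see e.g.\ \cite[Proposition 2.7]{GZ05} or \cite[Theorem 2.50]{GZ17}) yields constants $\alpha, A_0>0$ depending only on $X,\theta,\omega$ such that $\int_X e^{-\alpha u}\omega^n\leq A_0$ for every $u\in\textup{PSH}(X,\theta)$ with $\sup_X u\leq 0$. Restricting the integral to $K$ and using $u\leq -M_\phi(K)$ there gives
\[
\int_K \omega^n \;\leq\; A_0\, e^{-\alpha M_\phi(K)}.
\]

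Combining with Hölder's inequality, with $q=p/(p-1)$, yields
\[
\int_K f\omega^n \;\leq\; \|f\|_{L^p}\,A_0^{1/q}\,\exp\!\Bigl(-\tfrac{\alpha}{q}\bigl(\tfrac{\int_X\theta_\phi^n}{\textup{Cap}_\phi(K)}\bigr)^{1/n}\Bigr).
\]
Setting $t:=\textup{Cap}_\phi(K)/\int_X\theta_\phi^n\in(0,1]$, the right-hand side is $\|f\|_{L^p}A_0^{1/q}\exp(-\frac{\alpha}{q}t^{-1/n})$, and since $\exp(-c\,t^{-1/n})\leq C_1 t^{2}$ on $(0,1]$ for a constant $C_1$ depending only on $\alpha,q,n$, we conclude
\[
\int_K f\omega^n \;\leq\; C\,\frac{\textup{Cap}_\phi(K)^2}{(\int_X \theta_\phi^n)^2}
\]
with $C=C(X,\theta,\omega,p,n,\|f\|_{L^p})$, finishing the proof. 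The only delicate point is the existence of a uniform Skoda constant $\alpha$ independent of the potential $u$, which is standard in the literature; the rest is a direct combination of the Alexander--Taylor-type comparison (Lemma \ref{lem: compcap}) and Hölder. No new idea beyond the small-unbounded-locus case of \cite[Proposition 4.30]{DDL2} is needed, since all ingredients (inner regularity, $M_\phi$ bound, the normalization $\phi\leq 0$) have now been established in the present generality.
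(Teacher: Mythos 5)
Your argument is correct and is essentially the paper's own proof (the paper simply cites the proof of \cite[Proposition 4.30]{DDL2}, which is exactly this scheme: reduce to compact sets, bound $M_\phi(K)$ from below via Lemma \ref{lem: compcap}, apply the uniform Skoda estimate to the normalized global extremal function, and finish with H\"older plus absorption of the exponential decay into a power of the capacity). Two harmless slips worth fixing: the uniform Skoda bound requires the normalization $\sup_X u=0$, not merely $\sup_X u\le 0$ --- which does hold here since $M_\phi(K)=\sup_X V_{K,\phi}^*$ by definition --- and $V_{K,\phi}^*\le \phi$ on $K$ only off a pluripolar (hence Lebesgue-null) set, which suffices for the volume estimate; the zero-capacity case is likewise trivial since such $K$ is pluripolar by Lemma \ref{lem: zero_cap}.
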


\section{Monge-Amp\`ere equations with prescribed singularity type} \label{sect: MA equation}

The goal of this section is to prove the existence and uniqueness of
 solutions to the Monge-Amp\`ere equation 
\begin{equation}\label{eq: mu_presc_eq}
\theta_u^n = \mu, \ u \in \mathcal{E}(X,\theta,\phi),
\end{equation}
where $\mu$ is a given non-pluripolar Borel measure on $X$ and $\phi$ is a $\theta$-psh function on $X$ such that 
$$P[\phi]=\phi \ \textup{ and } \ \int_X \theta_{\phi}^n= \mu(X)>0.$$ 

In the particular case when $\mu =f  \omega^n$ for some $f \in L^p(X,\omega), \ p >1$, we will show that the solution $u$ additionally satisfies $[u]=[\phi]$. 

\subsection{Construction of supersolutions with $L^p$ density}

\begin{prop}\label{prop: super solution}
	Assume that $0\leq f \in L^p(X,\omega^n)$ for some $p>1$ and $\int_X f\omega^n =\int_X \theta_{\phi}^n$. Then for each $b>1$, there exists $v\in \PSH(X,\theta)$, which is less singular than $\phi$, such that 
	\[
	\theta_{v}^n \leq bf\omega^n. 
	\]
\end{prop}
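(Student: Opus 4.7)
The plan is to reduce to \cite[Theorem 1.4]{DDL2}, which solves the prescribed-singularity Monge-Amp\`ere equation in the special case when the reference potential has small unbounded locus. Since $\phi$ need not have small unbounded locus here, I would approximate it from above by a model potential with that property and then invoke \cite[Theorem 1.4]{DDL2} to obtain the desired supersolution.

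More precisely, in a first step I would use Demailly's regularization \cite{Dem92}, combined with the bigness of $\{\theta\}$ to absorb the loss of positivity, to produce a $\theta$-psh function $\widetilde{\phi}$ with analytic singularities satisfying $\widetilde{\phi}\geq \phi$ on $X$ and
\[
\int_X \theta_{\widetilde{\phi}}^n \ \leq \ b \int_X \theta_\phi^n.
\]
Any sufficiently fine Demailly regularization (after correcting it to be genuinely $\theta$-psh by mixing with a fixed $\theta$-psh potential of strictly positive curvature excess, whose existence is guaranteed by bigness) will do: the non-pluripolar masses of the resulting approximating sequence decrease monotonically to $\int_X \theta_\phi^n$ by \cite[Theorem 2.3]{DDL2}, and since $b>1$ the displayed estimate holds along any sufficiently fine regularization. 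Setting $\phi':= P_\theta[\widetilde{\phi}]$, I obtain a model potential with $[\phi']=[\widetilde{\phi}]$, hence still with small unbounded locus, and with the same non-pluripolar mass $\int_X \theta_{\phi'}^n = \int_X \theta_{\widetilde{\phi}}^n$.

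In the second step I would set $\gamma := \int_X \theta_{\phi'}^n/\int_X f\omega^n \in [1,b]$ and $g := \gamma f$. Then $g \in L^p(\omega^n)$, $g\leq bf$ pointwise, and $\int_X g\omega^n = \int_X \theta_{\phi'}^n$. Since $\phi'$ is a model potential with small unbounded locus, \cite[Theorem 1.4]{DDL2} produces $v\in \PSH(X,\theta)$ with $[v]=[\phi']$ and $\theta_v^n = g\omega^n = \gamma f \omega^n \leq bf\omega^n$. Finally, since $[v] = [\widetilde{\phi}]$ and $\widetilde{\phi} \geq \phi$ on $X$, there is a constant $C$ with $v\geq \phi - C$, so $v$ is less singular than $\phi$, as required.

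The principal obstacle is the construction in the first step: classical Demailly regularization yields $(\theta+\epsilon_k\omega)$-psh rather than $\theta$-psh functions, so one must carefully correct the loss of positivity while preserving both the analytic-singularity property and the decreasing character of the sequence. Once a monotone sequence of genuine $\theta$-psh approximations of $\phi$ with analytic singularities is in hand, the continuity of non-pluripolar products \cite[Theorem 2.3]{DDL2} together with the properties of the envelope $P_\theta[\cdot]$ reduce the whole argument to a direct application of \cite[Theorem 1.4]{DDL2}.
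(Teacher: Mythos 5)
Your reduction to \cite[Theorem 1.4]{DDL2} would indeed work \emph{if} Step 1 could be carried out, but Step 1 is precisely where the whole difficulty of the proposition sits, and as stated it is not just unjustified but impossible in general. The mass claim is the heart of the problem: \cite[Theorem 2.3]{DDL2} only gives semicontinuity in the wrong direction along a decreasing sequence $\phi_k\searrow\phi$, namely $\int_X\theta_{\phi}^n\leq\liminf_k\int_X\theta_{\phi_k}^n$ (consistent with the monotonicity of \cite{WN17}); it does \emph{not} give $\int_X\theta_{\phi_k}^n\to\int_X\theta_\phi^n$. Continuity of non-pluripolar masses along decreasing approximation is false in general (already $\max(\phi,V_\theta-k)\searrow\phi$ has constant mass $\int_X\theta_{V_\theta}^n$), and for Demailly-type regularizations it is a well-known delicate problem, not a consequence of the cited results. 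Worse, no regularization can fix this: if $\widetilde{\phi}$ has analytic singularities and is less singular than $\phi$, then $\nu(\widetilde{\phi},x)\leq\nu(\phi,x)$ for every $x$; so if $\phi$ has zero Lelong numbers everywhere, $\widetilde{\phi}$ is forced to be locally bounded and hence has full mass $\int_X\theta_{V_\theta}^n$. Since there exist potentials with zero Lelong numbers whose non-pluripolar mass is strictly smaller than the full mass (already on $\mathbb{CP}^1$, by taking $\omega+\ddbar\phi$ to be a non-atomic measure charging a polar Cantor set), the bound $\int_X\theta_{\widetilde{\phi}}^n\leq b\int_X\theta_\phi^n$ is unattainable for $b$ close to $1$, and your approach collapses at its first step.

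There is also an internal inconsistency in the construction: correcting the loss of positivity by mixing the Demailly approximants with a fixed $\theta$-psh potential $\psi_0$ of strictly positive curvature excess introduces the poles of $\psi_0$ into $\widetilde{\phi}$, so the key inequality $\widetilde{\phi}\geq\phi$ (or even $\widetilde{\phi}\succeq\phi$) fails wherever $\psi_0=-\infty$ but $\phi$ is finite; yet this inequality is exactly what you use at the end to conclude $v\geq\phi-C$. The paper's proof avoids analytic approximation of $\phi$ altogether: it solves auxiliary equations with minimal-singularity solutions, $\theta_{\varphi_k}^n=c_kf\omega^n+a\,\mathbbm{1}_{\{\phi\leq V_\theta-k\}}\theta_{\max(\phi,V_\theta-k)}^n$ via \cite[Theorem A]{BEGZ10}, chooses $a$ so that the limiting constant $c(a)<b$, obtains the uniform bound $\varphi_k\geq\phi-C$ from the relative Ko{\l}odziej estimate (Theorem \ref{thm: uniform estimate}) combined with Lemma \ref{lem: cap psi vs cap theta}, and then builds $v$ as a limit of rooftop envelopes $P_\theta(\min(\varphi_k,\dots,\varphi_{k+j}))$, the supersolution property being preserved by Lemma \ref{lem: MA of rooftop envelope} and \cite[Theorem 2.3]{DDL2}. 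If you want to keep a reduction-style argument, the approximation must stay within the singularity class of $\phi$ itself rather than pass through analytic singularities.
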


\begin{proof}
	Fix $a\in (0,1)$. For $k\in \mathbb{N}^*$ we choose $\varphi_k \in \Ec(X,\theta)$ with $\sup_X \varphi_k=0$ such that
	\[
	\theta_{\varphi_k}^n  = c_k f\omega^n + a \mathbbm{1}_{\{\phi \leq V_{\theta}-k\}} \theta_{\max(\phi,V_{\theta}-k)}^n. 
	\]
	Here, the constant $c_k\geq 1$ is chosen so that the total mass of the measure on the right-hand side is $\int_X \theta_{V_{\theta}}^n$. This insures existence (and uniqueness) of $\varphi_k$, as follows from \cite[Theorem A]{BEGZ10}. A direct computation shows that 
	\begin{equation}
		\label{eq: mass computation 1}
		\int_X\theta_{V_{\theta}}^n = c_k \int_X \theta_{\phi}^n + a\left (\int_X\theta_{V_\theta}^n-\int_{\{\phi>V_{\theta}-k\}} \theta_{\max(\phi,V_{\theta}-k)}^n \right ). 
	\end{equation}
	As a consequence, $c_k\nearrow c(a)\geq 1$ given by $c(a) = a+  (1-a) \int_X\theta_{V_{\theta}}^n/\int_X \theta_{\phi}^n.$
    
We choose $a\in (0,1)$ close enough to $1$ such that  $c(a) < b$. Fix $\varepsilon\in (0,1)$ such that $a(1-\varepsilon)^{-n} <1$. Set $\psi_k:= (1-\varepsilon) \max(\phi,V_{\theta}-k) + \varepsilon V_{\theta} $ and notice that $0\geq \psi_k \in \PSH(X,\theta)$, $\psi_k \geq \phi$.  Additionally, we notice that $\theta_{\max(\phi,V_{\theta}-k)}^n \leq (1-\varepsilon)^{-n} \theta_{\psi_k}^n,$ in particular
	\[
	\theta_{\varphi_k}^n  \leq  c(a) f\omega^n + \frac{a}{(1-\varepsilon)^n} \theta_{\psi_k}^n. 
	\]
Since $f\in L^p(X,\omega^n), p>1$, it follows from \cite[Proposition 4.3]{BEGZ10} that 
\[
\int_E f\omega^n \leq A_1 [\textup{Cap}_{V_\theta}(E)]^2,
\]
for every Borel set $E\subset X$, where $A_1$ is a positive constant depending on $\theta,n,p,\|f\|_p$. It then follows from Lemma \ref{lem: cap psi vs cap theta} below that 
\begin{equation}
	\label{eq: cap estimate 2}
	\int_E f\omega^n \leq \frac{A_1}{\varepsilon^{2n}} [\textup{Cap}_{\psi_k}(E)]^2,
\end{equation}
for every Borel set $E\subset X$.
Moreover, it follows from \cite[Theorem 1.2]{DDL1} that $P_{\theta}[\varphi_k]=V_{\theta}$. Hence we can apply Theorem \ref{thm: uniform estimate} to $\chi: = \psi_k$, $u := \varphi_k$, and $\tilde a := a(1-\varepsilon)^{-n}$ to conclude that
	\[
	0 \geq \varphi_k \geq \psi_k-C\geq \phi -C, 
	\]
	for all $k$. Here, $C>0$ depends on $\varepsilon, a, A_1, p, \|f\|_p$. Now, for each $k,j$ we set 
	\[
	v_{k,j}:= P_{\theta}(\min(\varphi_k,..., \varphi_{k+j})).
	\]
	Observe that $0 \geq v_{k,j}\geq \phi -C$, for every $j,k$. Consequently $v_{k,j} \searrow v_k \in \textup{PSH}(X,\theta)$ as $j \to \infty$, $v_k \nearrow v \in \textup{PSH}(X,\theta)$ as $k \to \infty$, and the following estimates trivially hold:
$$0 \geq v_{k},v\geq \phi -C.$$
In addition to the above, observing that $\{\phi\leq V_\theta-k-\ell\}\subset \{\phi\leq V_\theta-k\}$ for any $\ell=0, \dots, j$, it follows from Lemma \ref{lem: MA of rooftop envelope} that 
	\begin{equation}
		\label{eq: envelope of super solutions 1}
		\theta_{v_{k,j}}^n \leq c(a) f\omega^n + \mathbbm{1}_{\{\phi\leq V_{\theta}-k\}} \sum_{\ell=0}^j \theta_{\max(\phi,V_{\theta}-k-\ell)}^n. 
	\end{equation}
Now fix $s>0$ and consider $k >s$. As a result of the above estimate, for arbitrary $\delta \in (0,1)$ we can write:
	\[
\frac{\max(\phi-V_{\theta}+s,0)}{\max(\phi-V_{\theta}+s,0)+\delta} \cdot \theta_{v_{k,j}}^n	\leq \mathbbm{1}_{\{\phi>V_{\theta}-s\}} \theta_{v_{k,j}}^n \leq \mathbbm{1}_{\{\phi>V_{\theta}-k\}} \theta_{v_{k,j}}^n\leq c(a)f\omega^n \leq b f \omega^n. 	\]
Since the fraction on the left hand side is a bounded quasi-continuous function (with values in $[0,1]$), we can apply \cite[Theorem 2.3]{DDL2} to conclude that 
	\[
	\frac{\max(\phi-V_{\theta}+s,0)}{\max(\phi-V_{\theta}+s,0)+\delta} \cdot \theta_{v_{k}}^n \leq bf\omega^n. 	\]
Another application of \cite[Theorem 2.3]{DDL2} yields that 
	\[
	\frac{\max(\phi-V_{\theta}+s,0)}{\max(\phi-V_{\theta}+s,0)+\delta} \cdot \theta_v^n \leq bf\omega^n. 	\]
Now letting $\delta \searrow 0$ we arrive at $\mathbbm{1}_{\{\phi>V_{\theta}-s\}} \theta_{v}^n \leq b f\omega^n.$ Finally, letting $s\to +\infty$ the conclusion follows. 
\end{proof}

We provide the following lemma that was used in the proof of the above proposition:

\begin{lemma}\label{lem: cap psi vs cap theta}
Suppose $\varepsilon \in (0,1)$, $w\in \PSH(X,\theta)$, $w\leq 0$, and $\psi := (1-\varepsilon) w + \varepsilon V_{\theta}\leq 0.$ Then for any Borel subset $E\subset X$ one has 
	\[
	{\rm Cap}_{\theta}(E):=\textup{Cap}_{V_\theta}(E) \leq \varepsilon^{-n} \cappsi(E). 
	\]
\end{lemma}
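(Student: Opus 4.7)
\medskip\noindent\textbf{Plan of proof.} The strategy is direct: given any candidate $u$ in the sup defining $\textup{Cap}_\theta(E)$, I construct an explicit candidate $v$ for $\capacity_\psi(E)$ whose non-pluripolar Monge-Amp\`ere measure dominates $\varepsilon^n \theta_u^n$, and then take the supremum. The natural candidate, by analogy with the definition of $\psi$, is
\[
v := (1-\varepsilon) w + \varepsilon u.
\]

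First I would verify admissibility. Since $v$ is a convex combination of functions in $\PSH(X,\theta)$, we have $v \in \PSH(X,\theta)$. Moreover $v - \psi = \varepsilon(u - V_\theta)$, so the assumption $V_\theta - 1 \leq u \leq V_\theta$ gives $\psi - \varepsilon \leq v \leq \psi$. Because $\varepsilon \in (0,1)$, this implies $\psi - 1 \leq v \leq \psi$, so $v$ is a legitimate competitor in the definition of $\capacity_\psi(E)$.

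Next I would compare the Monge-Amp\`ere measures. From $v = (1-\varepsilon)w + \varepsilon u$ we read off the identity of positive $(1,1)$-currents
\[
\theta_v = \varepsilon \theta_u + (1-\varepsilon)\theta_w.
\]
Expanding by multilinearity of the non-pluripolar product (a standard property recalled in \cite{BEGZ10} and used elsewhere in the paper),
\[
\theta_v^n = \sum_{k=0}^{n} \binom{n}{k} \varepsilon^k (1-\varepsilon)^{n-k}\, \langle \theta_u^k \wedge \theta_w^{n-k}\rangle.
\]
All summands are non-negative, and keeping only the $k=n$ term gives the pointwise inequality of non-pluripolar measures $\theta_v^n \geq \varepsilon^n \theta_u^n$.

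Combining these two observations yields
\[
\int_E \theta_u^n \leq \varepsilon^{-n}\int_E \theta_v^n \leq \varepsilon^{-n}\,\capacity_\psi(E),
\]
and taking the supremum over all admissible $u$ gives $\textup{Cap}_\theta(E) \leq \varepsilon^{-n} \capacity_\psi(E)$. There is no real obstacle here: the only point that requires a word of justification is the multinomial expansion for mixed non-pluripolar products, which is standard. Everything else is a bookkeeping check that $v$ lies in the right range of potentials.
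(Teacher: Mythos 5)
Your proof is correct and follows essentially the same route as the paper: take a candidate $u$ with $V_\theta-1\leq u\leq V_\theta$, pass to $v=(1-\varepsilon)w+\varepsilon u$, check $\psi-1\leq v\leq\psi$, and use multilinearity of the non-pluripolar product to get $\theta_v^n\geq \varepsilon^n\theta_u^n$ before taking the supremum. The paper's proof is the same argument, merely stating the inequality $\varepsilon^n\int_E\theta_u^n\leq\int_E((1-\varepsilon)\theta_w+\varepsilon\theta_u)^n$ without writing out the multinomial expansion you made explicit.
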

\begin{proof}
	If $u\in \PSH(X,\theta)$ satisfies $V_{\theta}-1\leq u \leq V_{\theta}$ then the function $v:=(1-\varepsilon) w + \varepsilon u$ is $\theta$-psh and satisfies $\psi -1 \leq v \leq \psi$, hence 
	\[
	\varepsilon^n \int_E \theta_u^n  \leq \int_E ((1-\varepsilon)\theta_w + \varepsilon \theta_u)^n =  \int_E \theta_v^n \leq \cappsi(E). 
	\]
	Taking the supremum over such $u$ one concludes the proof.
\end{proof}
\subsection{Existence for measures with bounded density}

\begin{theorem}\label{thm: existence L1 density}
		Assume that $0\leq f \in L^{\infty}(X,\omega^n)$ and $\int_X f\omega^n =\int_X \theta_{\phi}^n$. Then there exists  $u\in \Ec(X,\theta,\phi)$ such that $\theta_u^n = f\omega^n$.  
\end{theorem}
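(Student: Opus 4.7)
The plan is to combine approximation via \cite[Theorem A]{BEGZ10} with a projection onto the singularity class $[\phi]$ using the envelope operator $P_\theta[\phi]$. For each $k\geq 1$, I would consider the measure
\[
\mu_k := c_k f\omega^n + \mathbbm{1}_{\{\phi \leq V_\theta - k\}}\theta_{\max(\phi,V_\theta - k)}^n,
\]
with $c_k$ chosen so that $\mu_k(X) = \int_X \theta_{V_\theta}^n$. The mass computation of Proposition \ref{prop: super solution} (specialized to $a=1$) yields $c_k \nearrow 1$ as $k\to+\infty$, since $\int_{\{\phi>V_\theta-k\}}\theta_\phi^n \nearrow \int_X\theta_\phi^n$ by non-pluripolarity of $\theta_\phi^n$. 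Because $f\in L^\infty$ and $\theta_{V_\theta}^n/\omega^n$ is bounded (by \cite[Theorem 3.8]{DDL2}), the density of $\mu_k$ is uniformly bounded. By \cite[Theorem A]{BEGZ10} there exists $\hat{u}_k\in\PSH(X,\theta)$ of minimal singularity type with $\theta_{\hat{u}_k}^n = \mu_k$, and Ko{\l}odziej's $L^\infty$ estimate from \cite{BEGZ10} gives $V_\theta - C_0 \leq \hat{u}_k \leq V_\theta$ uniformly after normalizing $\sup_X \hat{u}_k=0$.

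Next, set $u_k := P_\theta[\phi](\hat{u}_k)$. By the envelope properties from \cite[Section 3]{DDL2}, $u_k \simeq \phi$, so $\int_X \theta_{u_k}^n = \int_X \theta_\phi^n = \int_X f\omega^n$; moreover, the monotonicity and translation-equivariance of $P_\theta[\phi]$ together with the bounds on $\hat{u}_k$ yield $\phi - C_0 \leq u_k \leq \phi$ uniformly in $k$. Hence $(u_k)$ is relatively compact in $L^1(\omega^n)$, and along a subsequence $u_k\to u$ in $L^1$, with $\phi - C_0 \leq u \leq \phi$, so in particular $u\in\PSH(X,\theta,\phi)$.

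To identify $\theta_u^n$, I would first establish the contact-set identity $\theta_{u_k}^n = \mathbbm{1}_{\{u_k=\hat{u}_k\}}\mu_k$ by passing to the limit $C\to+\infty$ in \cite[Lemma 3.7]{DDL2} applied to $P_\theta(\phi+C,\hat{u}_k)$: since $\hat{u}_k$ is bounded, the contact set $\{P_\theta(\phi+C,\hat{u}_k)=\phi+C\}$ is empty for $C$ large, so only the $\hat{u}_k$-contact contribution survives. Writing $\mu_k = c_k f\omega^n + \nu_k$ with $\nu_k := \mathbbm{1}_{\{\phi \leq V_\theta - k\}}\theta_{\max(\phi,V_\theta - k)}^n$, the contact set $\{u_k=\hat{u}_k\}$ lies in $\{\phi > -\infty\}$ (because $u_k \simeq \phi$ while $\hat{u}_k$ is bounded), so
\[
\int_{\{u_k=\hat{u}_k\}}\nu_k \leq \int_{\{V_\theta - k \geq \phi > -\infty\}}\nu_k \to 0
\]
by dominated convergence, using that $\nu_k$ is a non-pluripolar measure. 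Combined with $\int_X\theta_{u_k}^n = \int_X f\omega^n$, this forces $\mathbbm{1}_{\{u_k=\hat{u}_k\}}f \to f$ in $L^1(\omega^n)$, and Lemma \ref{lem: stability of subsolutions} applied to $\theta_{u_k}^n \geq c_k\mathbbm{1}_{\{u_k=\hat{u}_k\}}f\omega^n$ yields $\theta_u^n \geq f\omega^n$. Since $\int_X\theta_u^n \leq \int_X\theta_\phi^n = \int_X f\omega^n$ by \cite[Theorem 1.2]{WN17}, equality holds, so $\theta_u^n = f\omega^n$ and $u\in\Ec(X,\theta,\phi)$. The principal obstacle is the rigorous derivation of the contact-set identity $\theta_{u_k}^n = \mathbbm{1}_{\{u_k=\hat{u}_k\}}\theta_{\hat{u}_k}^n$, which requires tracking Monge-Amp\`ere measures carefully along the limiting process $P_\theta(\phi+C,\hat{u}_k) \nearrow u_k$ as $C\to+\infty$.
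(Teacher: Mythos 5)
Your first step contains the fatal gap. With the coefficient $1$ in front of the truncated term, the measure $\mu_k$ does \emph{not} have bounded (or even $L^p$) density: the part $\nu_k=\mathbbm{1}_{\{\phi\leq V_\theta-k\}}\theta^n_{\max(\phi,V_\theta-k)}$ in general charges the contact set $\{\phi=V_\theta-k\}$ with a piece singular with respect to $\omega^n$ (already on $\CP^1$ with $\phi$ having a log pole, the Monge--Amp\`ere measure of $\max(\phi,-k)$ is arc-length measure on a small circle, and that circle lies in $\{\phi\leq V_\theta-k\}$). Hence neither \cite[Theorem A]{BEGZ10} produces a minimal-singularity solution nor does Ko{\l}odziej's $L^\infty$ estimate apply to $\mu_k$; worse, the claimed uniform bound $\hat u_k\geq V_\theta-C_0$ is simply false: as $k\to\infty$ the mass $\int_X\theta_{V_\theta}^n-\int_X\theta_\phi^n$ carried by $\nu_k$ concentrates on shrinking neighborhoods of the polar locus of $\phi$, forcing $\hat u_k$ to blow down there (in the one-dimensional example $\hat u_k\approx\tfrac12\log\max(|z|,e^{-2k})$ near the pole, so $\inf_X\hat u_k\approx -k$). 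Since the whole projection step $u_k:=P_\theta[\phi](\hat u_k)$ and the bound $\phi-C_0\leq u_k\leq\phi$ rest on this estimate, the construction collapses. This is precisely why the paper's Proposition \ref{prop: super solution} insists on $a<1$, replaces $\max(\phi,V_\theta-k)$ by $\psi_k=(1-\varepsilon)\max(\phi,V_\theta-k)+\varepsilon V_\theta$ so that the singular part is absorbed as $\tilde a\,\theta_{\psi_k}^n$ with $\tilde a<1$, and invokes the \emph{relative} estimate Theorem \ref{thm: uniform estimate} to obtain only the weaker bound $\varphi_k\geq\phi-C$ (relative to $\phi$, not $V_\theta$); the price is that one only gets supersolutions $\theta_{\varphi_k}^n\leq bf\omega^n$ with $b>1$, and the solution is then extracted by the rooftop-envelope limiting procedure of Lemma \ref{lem: MA of rooftop envelope}.

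Even granting the uniform estimate, your concluding step needs the equality $\theta_{u_k}^n=\mathbbm{1}_{\{u_k=\hat u_k\}}\theta_{\hat u_k}^n$ in order to produce the subsolution-type lower bound fed into Lemma \ref{lem: stability of subsolutions}, and this is not available: \cite[Theorem 3.8]{DDL2} (equivalently, the $C\to+\infty$ limit of \cite[Lemma 3.7]{DDL2}) gives only the inequality $\theta_{P_\theta[\phi](\hat u_k)}^n\leq\mathbbm{1}_{\{P_\theta[\phi](\hat u_k)=\hat u_k\}}\theta_{\hat u_k}^n$, and your argument that $\{P_\theta(\phi+C,\hat u_k)=\phi+C\}$ is empty for large $C$ is incorrect (that set is contained in $\{\phi\leq\hat u_k-C\}$, a nonempty neighborhood of the polar set of $\phi$; only its $\theta_\phi^n$-mass tends to zero). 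An upper bound on $\theta_{u_k}^n$ plus near-equality of total masses does not yield the pointwise lower bound $\theta_{u_k}^n\geq c_k\mathbbm{1}_{\{u_k=\hat u_k\}}f\omega^n$ you need. Note that the paper avoids this issue entirely: it works exclusively with one-sided bounds $\theta^n\leq(1+2^{-k})f\omega^n$, which are preserved under $P_\theta(\cdot,\cdot)$ (Lemma \ref{lem: MA of rooftop envelope}) and under monotone limits (\cite[Theorem 2.3]{DDL2}), and recovers the equality $\theta_\varphi^n=f\omega^n$ only at the very end by comparing total masses via \cite[Theorem 1.2]{WN17}.
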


\begin{proof}
For each $k\in \mathbb{N}^*$ it follows from Proposition \ref{prop: super solution} that there exists $\varphi_k \in \PSH(X,\theta)$, normalized by $\sup_X \varphi_k=0$, such that $\theta_{\varphi_k}^n \leq (1+2^{-k})f\omega^n$ and $\varphi_k$ is less singular than $\phi$. In particular $P_\theta[\varphi_k]$ is less singular than $\phi$. It follows from Theorem \ref{thm: uniform estimate} (with $a=0$) and Proposition \ref{prop: capLp} that $\varphi_k\geq \phi -C$, for some uniform constant $C>0$. 

As in  the proof of Proposition \ref{prop: super solution}, we set $v_{k,j}= P_\theta(\min(\varphi_k, \dots \varphi_{k+j}))$. We then have $v_{k,j} \searrow v_k \in \textup{PSH}(X,\theta)$ as $j \to \infty$, $v_k \nearrow \varphi \in \textup{PSH}(X,\theta)$ as $k \to \infty$ and $0\geq \varphi \geq \phi-C$. Moreover, by Lemma \ref{lem: MA of rooftop envelope} we get that $\theta_{v_{k,j}}^n\leq (1+2^{-k}) f\omega^n$. Using \cite[Theorem 2.3]{DDL2} we arrive at 
$$ \theta_{v_{k}}^n\leq (1+2^{-k}) f\omega^n.$$ Another application of \cite[Theorem 2.3]{DDL2} gives $ \theta_{\varphi}^n\leq f\omega^n.$
It follows from \cite[Theorem 1.2]{WN17}  that $\int_X \theta_{\varphi}^n \geq \int_X \theta_{\phi}^n = \int_X f\omega^n$, hence we actually have $\theta_{\varphi}^n = f\omega^n$. 
Given our normalizations, we get that $\varphi$ is a candidate in the definition of $P[\phi]$, hence $\varphi \leq P[\phi]=\phi.$
This means that $\varphi$ has the same singularity type of $\phi$ and that in particular $\varphi\in \Ec(X,\theta,\phi)$.
\end{proof}

\subsection{Existence for non-pluripolar measures} \label{subsect: CY equation non pluripolar}
 Following the strategy in \cite{GZ07} (going back to \cite{Ce98}) we will now solve \eqref{eq: mu_presc_eq}. We first describe the technical setup.  
 
 Let $\Omega_{\alpha}$, $\alpha =1,...,N$ be a finite covering of  $X$ by open balls which are biholomorphic to the unit ball in $\mathbb{C}^n$ via $\tau_{\alpha}: B\rightarrow \Omega_\alpha$. Let $\chi_{j}$ be spherically symmetric smoothing kernels in $\mathbb{C}^n$ approximating the Dirac mass concentrated at the origin. Let $(\rho_{\alpha})_{\alpha=1}^N$ be a partition of unity subordinate to $(\Omega_{\alpha})_{\alpha=1}^N$. Let $\mu_{\alpha}$ be the pullback of $\mu\mid_{\Omega_{\alpha}}$ by the biholomorphism $\tau_{\alpha}$, which is a positive Borel measure in the unit ball $B$ in $\mathbb{C}^n$. For each $j$ we define a (smooth) measure on $X$,
 \[
 \nu_j := c_j \sum_{\alpha} \rho_{\alpha} \cdot (\tau_{\alpha})_{*}(\mu_{\alpha}  \star \chi_{j}),
 \]
 where $c_j$ is a positive normalization constant insuring that $\nu_j(X)=\int_X \theta_{\phi}^n>0$.  Since $\nu_j\to \mu$ weakly it follows that $c_j\to 1$, hence we can assume that $c_j\leq 2$ for all $j$.  

We will need the following lemma.

\begin{lemma}\label{lem: convergence}
Assume $\mu$ is a non-pluripolar measure on $X$. Let $u_j, u\in \PSH(X, A\omega)$ for some $A>0$. Assume $u_j\rightarrow u$ in $L^1(X, \omega^n)$ 
and  $\sup_j \int_X |u_j|^2 d\mu<+\infty$. Then $$\int_X |u_j-u|d\mu \rightarrow 0.$$
\end{lemma}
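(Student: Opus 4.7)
The plan is to reduce to the case of uniformly bounded $A\omega$-psh potentials by truncation (using the $L^2(\mu)$ hypothesis), then to combine a convergence-in-capacity theorem with a Borel--Cantelli argument, and finally to exploit that $\mu$ does not charge pluripolar sets.

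First I would truncate. The $L^1(X,\omega^n)$-convergence combined with the standard submean-type bound for quasi-psh functions (e.g. \cite[Proposition 2.6]{GZ05}) forces a uniform upper bound $\sup_X u_j \leq M$. Fix $K>0$ and set $\tilde u_j := \max(u_j, -K)$, $\tilde u := \max(u, -K)$, which are uniformly bounded $A\omega$-psh potentials satisfying $-K\leq \tilde u_j,\tilde u\leq M$. On $\{u_j < -K\}$ one has $|u_j|\geq K$, hence
$$
\int_X |u_j - \tilde u_j|\, d\mu \leq \int_{\{u_j<-K\}} |u_j|\, d\mu \leq \frac{1}{K}\int_X u_j^2\, d\mu \leq \frac{C}{K},
$$
and by Fatou the same estimate holds for $u$. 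Thus $\int_X |u_j - u|\, d\mu \leq \int_X |\tilde u_j - \tilde u|\, d\mu + 2C/K$, and it will suffice to prove $\int_X |\tilde u_j - \tilde u|\, d\mu \to 0$ for each fixed $K$.

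Next I would obtain a subsequence converging quasi-everywhere. Since $|\tilde u_j - \tilde u|\leq |u_j-u|$, the truncated sequence still converges in $L^1(X,\omega^n)$, and being a uniformly bounded family of $A\omega$-psh functions, Xing's theorem (in the compact K\"ahler adaptation) gives $\tilde u_j\to \tilde u$ in the Monge--Amp\`ere capacity. I may then extract a subsequence $\tilde u_{j_k}$ with $\capacity(\{|\tilde u_{j_k} - \tilde u| > 1/k\}) \leq 2^{-k}$; by countable subadditivity of capacity and continuity along decreasing sequences, the Borel--Cantelli-type set
$$
E := \bigcap_{m}\,\bigcup_{k\geq m}\{|\tilde u_{j_k} - \tilde u| > 1/k\}
$$
has capacity zero and is therefore pluripolar, and off $E$ one has $\tilde u_{j_k}\to \tilde u$ pointwise.

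Finally, non-pluripolarity of $\mu$ forces $\mu(E)=0$, hence $\tilde u_{j_k}\to \tilde u$ $\mu$-a.e.; combined with the uniform bound $|\tilde u_{j_k} - \tilde u|\leq K+M$, dominated convergence yields $\int_X |\tilde u_{j_k} - \tilde u|\, d\mu\to 0$. Applying this argument to every subsequence upgrades convergence to the full sequence, and letting $K\to\infty$ completes the proof. The hard part will be the middle step: upgrading $L^1(X,\omega^n)$-convergence of uniformly bounded quasi-psh functions to convergence in capacity and extracting a quasi-everywhere convergent subsequence, which requires the Bedford--Taylor machinery; once this is in hand, non-pluripolarity of $\mu$ provides the natural passage from quasi-everywhere convergence to $\mu$-a.e. convergence, and the $L^2(\mu)$-bound handles the unbounded tails.
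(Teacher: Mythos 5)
The central step of your plan is not available: it is simply not true that a uniformly bounded sequence of ($A\omega$-)psh functions converging in $L^1(X,\omega^n)$ converges in Monge--Amp\`ere capacity, and no theorem of Xing asserts this (Xing's results go the other way: convergence in capacity implies convergence of the Monge--Amp\`ere measures, or they characterize capacity convergence in terms of those measures). A standard one-dimensional counterexample: on a neighborhood of the closed unit disc set $v_j:=\max\bigl(\tfrac{1}{j}\log|z^j-1|,\,-1\bigr)$. These are psh, uniformly bounded, and converge in $L^1$ to $v=\max(\log^+|z|,-1)$; yet the set $\{v_j\le -\tfrac12\}=\{|z^j-1|\le e^{-j/2}\}$ has logarithmic capacity $(e^{-j/2})^{1/j}=e^{-1/2}$ for every $j$, while $v\ge -e^{-j/2}$ there, so $\{|v_j-v|\ge \tfrac14\}$ has Bedford--Taylor capacity bounded away from zero and there is no convergence in capacity (the example transplants to $\mathbb{CP}^n$ in the usual way). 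Note that only the lower side fails: since $(\sup_{k\ge j}u_k)^*\searrow u$, one does get that $\{u_j>u+\varepsilon\}$ has small capacity for large $j$; it is precisely the sets $\{u_j<u-\varepsilon\}$ that can keep large capacity, and for those the hypothesis $\sup_j\int_X|u_j|^2\,d\mu<+\infty$ must be exploited on the measure side rather than through capacity. A secondary, fixable point: your Fatou bound $\int_X u^2\,d\mu\le C$ presupposes $\mu$-a.e.\ convergence, which at that stage you have not established; one can instead use that $u=(\limsup_j u_j)^*\ge \limsup_j u_j$ everywhere, so $u^2\le\liminf_j u_j^2$ on $\{u\le 0\}$.

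For comparison, the paper's proof avoids capacity entirely: by \cite[Lemma 11.5]{GZ17} the $L^2(\mu)$ bound and the non-pluripolarity of $\mu$ give convergence of the signed integrals, $\int_X(u_j-u)\,d\mu\to 0$, and then the envelope trick $\tilde u_j:=(\sup_{k\ge j}u_k)^*\searrow u$ together with
\begin{equation*}
|u_j-u|=2\max(u_j,u)-u_j-u\le 2(\tilde u_j-u)+(u-u_j)
\end{equation*}
and the monotone convergence theorem upgrades this to $\int_X|u_j-u|\,d\mu\to 0$. If you wish to keep your truncation scheme, the capacity step must be replaced by an argument of this kind (or by reproving the content of \cite[Lemma 11.5]{GZ17}); as written, the proposal has a genuine gap at its key step.
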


\begin{proof}
It follows from \cite[Lemma 11.5]{GZ17}  that  
	\begin{equation}
		\label{eq: Cegrell}
		\int_{X} (u_j-u) d\mu \to 0. 
	\end{equation}
    For each $j>0$ we set $\tilde{u}_j:= (\sup_{k\geq j} u_k)^*$. Then $\tilde{u}_j \in \PSH(X,\theta)$ and $\tilde{u}_j$ decrease to $u$ pointwise. Since $\tilde{u}_j \geq \max(u_j,u)$ we can write 
    \[
    |u_j-u|  = 2 \max(u_j,u) -u_j-u \leq  2(\tilde{u}_j-u) +(u-u_j). 
    \]
    It thus follows from the monotone convergence theorem  and \eqref{eq: Cegrell} that $\int_X |u_j-u|d\mu \leq 2\int_X (\tilde{u}_j -u) d\mu + \int_X (u-u_j)d\mu \to 0.$  
\end{proof}

\begin{lemma}
	\label{lem: preparation 1}
	Assume that $\mu = f\id_{\{\phi>-A\}} \omega_{v}^n$ where $v\in \PSH(X,\omega)\cap L^{\infty}(X)$, $A>0$ is a constant, $f \geq 0$ is bounded. Assume also that $\mu \leq B \capphi$ for some positive constant $B$. Then there exists $u\in \Ec(X,\theta,\phi)$ such that $\theta_u^n =\mu$.  
\end{lemma}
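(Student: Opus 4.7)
The strategy is approximation plus compactness. I would approximate $\mu$ by measures $\mu_j$ with $L^\infty$-density against $\omega^n$, so that Theorem \ref{thm: existence L1 density} applies, solve the corresponding equations, and pass to the limit using the energy-capacity machinery of Lemma \ref{lem: energy capacity estimate}.

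A natural approximation is
\[
\mu_j := c_j \min(f, j)\, \id_{\{\phi > -A\}}\, \omega_{v_j}^n,
\]
where $v_j \in \PSH(X, (1+\delta_j)\omega) \cap C^\infty(X)$ is a Demailly/Richberg smoothing of $v$ with $\delta_j \searrow 0$ and $v_j \searrow v$, so that $\omega_{v_j}^n$ has bounded density against $\omega^n$; the constant $c_j$ normalizes the total mass to $\int_X \theta_\phi^n$ (with $c_j \to 1$). Theorem \ref{thm: existence L1 density} then yields $u_j \in \Ec(X, \theta, \phi)$ solving $\theta_{u_j}^n = \mu_j$, which I would normalize by $\sup_X(u_j - \phi) = 0$ so that $u_j \leq \phi$. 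One checks that $\mu_j \leq B'\,\capphi$ holds with a $j$-independent constant $B'$, using that $v$ is bounded so $\omega_{v_j}^n$ is controlled by $\omega_v^n$ in a capacity sense. The energy-capacity estimate (Lemma \ref{lem: energy capacity estimate}) applied with measure $\mu_j$, together with $\theta_{u_j}^n = \mu_j$ and the Cauchy--Schwarz bound $\int(\phi-u_j)\,d\mu_j \leq \bigl(\int(\phi-u_j)^2 d\mu_j\bigr)^{1/2} \mu_j(X)^{1/2}$, then self-improves to the uniform estimate $\int_X (\phi - u_j)\, \theta_{u_j}^n \leq M$.

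By Lemma \ref{lem: compactness and model type} the normalized sequence $\{u_j\}$ is relatively compact in $L^1(X, \omega^n)$. Extracting a subsequence $u_j \to u$ in $L^1$, Lemma \ref{lem: BEGZ phi} then places $u \in \Ec^1(X, \theta, \phi) \subset \Ec(X, \theta, \phi)$. To identify $\theta_u^n = \mu$, I would write both $\mu_j$ and $\mu$ as densities against a common dominating measure $\kappa$ with the densities converging in $L^1(\kappa)$, and invoke the stability Lemma \ref{lem: stability of subsolutions} to conclude $\theta_u^n \geq \mu$; the mass balance $\int_X \theta_u^n = \int_X \theta_\phi^n = \mu(X)$ then forces equality. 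The main obstacle lies in the approximation step itself: arranging $\mu_j$ to simultaneously admit an $L^\infty$-density against $\omega^n$ (so that Theorem \ref{thm: existence L1 density} applies) \emph{and} to preserve the capacity bound $\mu_j \leq B'\,\capphi$ uniformly (so that the energy-capacity estimate yields a $j$-independent energy bound) requires a delicate interplay between the smoothing of $v$ and the truncation of $f$. This is precisely where the specific form $\mu = f \id_{\{\phi > -A\}} \omega_v^n$ together with the boundedness of $v$ are essential, and where the identification of the limit via Lemma \ref{lem: stability of subsolutions} must be handled with care.
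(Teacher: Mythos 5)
Your overall architecture (approximate, solve via Theorem \ref{thm: existence L1 density}, get a uniform energy bound, pass to the limit with Lemmas \ref{lem: BEGZ phi} and \ref{lem: stability of subsolutions}) is reasonable, but the specific approximation you chose breaks the two steps that carry the real weight. First, the uniform bound $\mu_j\leq B'\capphi$ is asserted, not proved: the hypothesis of the lemma is a capacity bound on $\mu=f\id_{\{\phi>-A\}}\omega_v^n$ itself, and it does not transfer to $f\id_{\{\phi>-A\}}\omega_{v_j}^n$, because the Monge--Amp\`ere measures of smoothings $v_j$ are \emph{not} dominated (as measures) by $\omega_v^n$, and ``controlled in a capacity sense'' would at best give $\omega_{v_j}^n\leq C\,\mathrm{Cap}_\omega$, which is a different capacity from $\capphi$ (here $\theta$ is only big and $\phi$ can be very singular); comparing $\mathrm{Cap}_\omega$ restricted to $\{\phi>-A\}$ with $\capphi$ is itself a nontrivial claim you do not address. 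This particular issue is repairable (one can get $\int_X|u_j-\phi|\,d\mu_j\leq \|f\|_\infty\int_X(-u_j)\,\omega_{v_j}^n\leq C$ directly from a Chern--Levine--Nirenberg type bound, since the $v_j$ are uniformly bounded), but it signals that the capacity hypothesis is being used in the wrong place: in the paper it is used only against the \emph{fixed} measure $\mu$, through Lemma \ref{lem: energy capacity estimate}, to control $\int_X|u_j-\phi|^2d\mu$.

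The genuinely fatal step is the identification $\theta_u^n=\mu$. Lemma \ref{lem: stability of subsolutions} needs a single non-pluripolar measure $\kappa$ with $\theta_{u_j}^n\geq f_j\kappa$ and $f_j\to f$ in $L^1(\kappa)$; with $\mu_j=c_jf\id_{\{\phi>-A\}}\omega_{v_j}^n$ there is no such $\kappa$: the measures $\omega_{v_j}^n$ (absolutely continuous with respect to $\omega^n$) and $\omega_v^n$ (in general singular with respect to $\omega^n$) only converge \emph{weakly}, and weak convergence gives no $L^1$-convergence of Radon--Nikodym densities against any common dominating measure --- mass can sit on mutually singular pieces. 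Nor can you conclude from $u_j\to u$ in $L^1$ and $\theta_{u_j}^n=\mu_j\to\mu$ weakly that $\theta_u^n=\mu$: $L^1$ convergence of potentials does not imply convergence of the Monge--Amp\`ere measures. This is exactly why the paper smooths the measure $\mu$ itself by local convolutions, $\nu_j=c_j\sum_\alpha\rho_\alpha(\tau_\alpha)_*(\mu_\alpha\star\chi_j)$, rather than smoothing $v$: the convolution structure plus the mean value inequality for psh functions lets one transfer all estimates back to the fixed measure $\mu$ (Steps 1--2), upgrade to $\int_X|u_j-u|\,\theta_{u_j}^n\to 0$ (Step 3), and only then deduce $\theta_u^n\geq\mu$ via the comparison with $h_j=\max(u_j,u-1/j)$, plurifine locality, and convergence in capacity, finishing with the mass comparison from \cite{WN17}. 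Your proposal has no substitute for Step 3 and hence no mechanism to identify the limit; the sentence acknowledging that this ``must be handled with care'' is precisely where the missing proof lies.
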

\begin{proof}For each $j$ let $u_j\in \Ec(X,\theta,\phi)$ be such that $\sup_X u_j=0$ and $\theta_{u_j}^n =\nu_j$. These potentials exist by Theorem \ref{thm: existence L1 density}. Up to extracting a subsequence we can assume that $u_j\to_{L^1}u\in \PSH(X,\theta)$. Consequently, $u\leq \phi$. The goal is to prove that $u\in \Ec(X,\theta,\phi)$ and $\theta_u^n =\mu$. As we will see, the crucial ingredient is showing that $\int_X |u_j-u|\theta_{u_j}^n \to 0$.  We proceed in several steps. For notational convenience, we will use $C>0$ to denote various uniform constants independent of $j$, and we will also omit $\tau_{\alpha}$ from the formulas, as this will not cause confusion. 

\medskip\noindent\textbf{Step 1.} We claim that $\int_X |u_j-\phi| \theta_{u_j}^n$ is bounded.   
    
    Let $K_{\alpha}$ be a compact subset of $\Omega_{\alpha}$ such that $\Supp(\rho_{\alpha}) \Subset K_{\alpha}$.  Using the fact that $0\leq \rho_{\alpha}\leq 1$, $\phi\leq 0$ and the definition of the convolution we get, for $j$ large enough,
	\begin{flalign*}
	\int_X (\phi-u_j) d\nu_j & \leq C \sum_\alpha \int_{\Omega_{\alpha}}  \rho_{\alpha}(\phi-u_j)  d(\mu \star \chi_j) \leq C \sum_\alpha \int_{\Supp(\rho_{\alpha})}  (\phi-u_j) d(\mu \star \chi_j) \\
	&\leq C\sum_\alpha \int_{\Supp(\rho_{\alpha})} (-u_j) d(\mu \star \chi_j) \leq C\sum_\alpha \int_{K_{\alpha}} (-u_j \star \chi_j) d\mu,
	\end{flalign*}
where the last inequality follows from \cite[Theorem 1.1.5(v)]{Blocki}.
Note also that since $u_j$ is quasi-psh we have $u_j \star \chi_j \geq u_j -C$ on $K_\alpha$. The latter follows from the fact that $u_j= \varphi_j- g$, where $\varphi_j$ is psh on $\Omega_\alpha$ and $g$ is the local potential of $\theta$ in $\Omega_\alpha$; also, the mean value inequality for psh functions together with the fact that $\chi_j$ are radial functions give $\varphi_j\star \chi_j \geq \varphi_j$ on $K_{\alpha}$, for $j$ large enough.
We thus get
	\[
	\int_X (\phi-u_j) d\nu_j \leq C\left(\int_X |u_j| d\mu +1 \right).
	\]
Since $\mu \leq C \omega_v^n$ and $\sup_X u_j=0$ it follows from \cite[Corollary 3.3]{GZ05} that the right-hand side above is uniformly bounded in $j$, hence $\int_X |u_j-\phi|\theta_{u_j}^n=\int_X (\phi-u_j) d\nu_j\leq C.$
    
\medskip \noindent\textbf{Step 2.} We prove that $\int_X |u_j-u|d\mu \to 0$.

	Since $\mu \leq B \capphi$ it follows from Lemma \ref{lem: energy capacity estimate} and Step 1 that 
	 \[
	 \int_X |u_j-\phi|^2 d\mu \leq C \left(\int_X|u_j-\phi| \theta_{u_j}^n+1\right) \leq C'. 
	 \]
	 Since $\mu$ is supported on $\{\phi>-A\}$ it follows that 
	 $$
	 \int_X |u_j|^2 d\mu \leq 2\int_X |u_j-\phi|^2 d\mu + \int_X |\phi|^2 d\mu \leq 2C' + 2A^2\mu(X).
	 $$
	 Lemma \ref{lem: convergence} then gives the conclusion.

\medskip\noindent\textbf{Step 3.} We prove that $\int_X |u_j-u| \theta_{u_j}^n \to 0$. 

It suffices to argue that $\int_X \rho_{\alpha}|u_j-u| \theta_{u_j}^n \to 0$ for each $\alpha$. Let $\varphi_j=u_j+g$, $\varphi=u+g$, where $g$ is a local smooth potential of $\theta$ in $\Omega_{\alpha}$. For each $k$ we set $\tilde{\varphi}_j := (\sup_{k\geq j} \varphi_k)^*$. Then $\tilde{\varphi}_j$ decrease to $\varphi$ and we have $|\varphi_j-\varphi|\leq 2(\tilde{\varphi}_j-\varphi) + (\varphi-\varphi_j)$. Observing that $\nu_j \leq 2 (\mu \star \chi_j)$ on $\Supp(\rho_{\alpha})$, we then have 
	\begin{flalign*}
		\int_X \rho_{\alpha} |u_j-u| d\nu_j & \leq 2 \int_{\Supp(\rho_{\alpha})} |\varphi_j-\varphi|  d(\mu \star \chi_j) \leq  2\int_{K_{\alpha}} |\varphi_j-\varphi| \star \chi_j \, d\mu\\
		& \leq 2\int_{K_{\alpha}}( 2(\tilde{\varphi}_j-\varphi) + (\varphi-\varphi_j))\star \chi_j\,  d\mu.
	\end{flalign*}
	Since $\varphi,\varphi_j, \tilde{\varphi}_j$ are psh in $\Omega_{\alpha}$ (and in particular $\varphi \star \chi_j \geq \varphi$) it follows that
	\[
	\int_{K_{\alpha}}  (\tilde{\varphi}_j-\varphi) \star \chi_j d\mu \leq  \int_{K_{\alpha}} ( \tilde{\varphi}_j \star \chi_j -\varphi) d\mu \to 0 
	\]
	by monotone convergence . For the second term we have 
	\begin{flalign*}
	\int_{K_{\alpha}}  (\varphi-\varphi_j) \star \chi_j d\mu & \leq \int_{K_{\alpha}}  (\varphi\star \chi_j - \varphi_j)d\mu \leq \int_{K_{\alpha}}  (\varphi\star \chi_j -\varphi + \varphi - \varphi_j)d\mu \\
   & \leq \int_{K_{\alpha}}  (\varphi\star \chi_j -\varphi) d\mu +\int_{K_{\alpha}} |u - u_j|d\mu \to 0
	\end{flalign*}
	as follows from the monotone convergence theorem and Step 2. 
    
\medskip\noindent\textbf{Step 4.} It follows from Step 1 and Lemma \ref{lem: BEGZ phi} that $u\in \Ec(X,\theta,\phi)$ (in fact $u\in \Ec^1(X,\theta,\phi)$). We next show that $\theta_u^n=\mu$. By step 3, up to extracting a subsequence we can assume that
\begin{equation}\label{eq: E1_ujest}
\int_X |u_j-u|\theta_{u_j}^n \leq 2^{-j}, \ j \in \Bbb N.
\end{equation} 
We set $h_j := \max(u_j,u-1/j)$. Then, by \cite[Lemma 1.2]{GLZ17}, $h_j \to u$ in capacity and \cite[Theorem 2.3 and Remark 2.5]{DDL2} gives that $\theta_{h_j}^n\to \theta_u^n$ weakly. Set $\eta_j:= \id_{\{u_j\leq u-1/j\}} \theta_{u_j}^n$. By the locality of the complex Monge-Ampere measure with respect to the plurifine topology we have 
	\[
	 \theta_{h_j}^n + \id_{\{u_j\leq u-1/j\}} \theta_{u_j}^n  \geq \theta_{u_j}^n. 
	\]  
From \eqref{eq: E1_ujest} we get that $\eta_j(X)= \int_{\{u_j\leq u-1/j\}} \theta_{u_j}^n \leq \int_X j |u_j-u| \theta_{u_j}^n\leq j2^{-j} \to 0$. It thus follows that $\eta_j$ converges weakly to $0$, hence $\theta_u^n \geq \lim_j \theta_{u_j}^n=\lim_j \nu_j=\mu $. After comparing the total masses (via \cite[Theorem 1.2]{WN17}),  we have that $\theta_u^n = \mu$. 
\end{proof}

\begin{prop}
	\label{prop: mu dominated by capphi}
	Assume that $\mu \leq B \capphi$ for some positive constant $B$. Then there exists $u\in \Ec(X,\theta,\phi)$ such that $\theta_u^n=\mu$. 
\end{prop}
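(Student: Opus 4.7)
The plan is to reduce the general case to Lemma \ref{lem: preparation 1} by approximating $\mu$ via a Cegrell-type decomposition and passing to a limit, using the compactness afforded by the assumption that $\phi$ has model type singularity.

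Since $\mu \leq B \capphi$, Lemma \ref{lem: zero_cap} shows that $\mu$ is non-pluripolar, so by the Cegrell-type decomposition of non-pluripolar measures on a compact K\"ahler manifold (see e.g.\ \cite[Section 5.2]{GZ17}), there exist $v \in \PSH(X,\omega) \cap L^{\infty}(X)$ and a density $f \geq 0$ with $f \in L^1(\omega_v^n)$ such that $\mu = f \omega_v^n$. I would then introduce the truncations
\[
\mu_j := c_j \id_{\{\phi > -j\}} \min(f,j) \, \omega_v^n,
\]
where $c_j > 0$ is chosen so that $\mu_j(X) = \int_X \theta_\phi^n$. Because $\mu$ does not charge the pluripolar set $\{\phi = -\infty\}$, monotone convergence yields $c_j \searrow 1$, and in particular $\mu_j \leq 2B \capphi$ for all $j$ large enough. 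Lemma \ref{lem: preparation 1} (with $A = j$ and bounded density $c_j \min(f,j)$) then produces a solution $u_j \in \Ec(X,\theta,\phi)$ with $\theta_{u_j}^n = \mu_j$. After adding a constant I would renormalize $u_j$ so that $\sup_X (u_j - \phi) = 0$; since $u_j \preceq \phi$, this shift is finite and the renormalization forces $u_j \leq \phi$ on $X$.

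Because $\phi$ has model type singularity, Lemma \ref{lem: compactness and model type} ensures that the renormalized family $(u_j)$ is relatively compact in $L^1(\omega^n)$, so passing to a subsequence $u_j \to u \in \PSH(X,\theta)$ with $u \leq \phi$. The densities $c_j \id_{\{\phi > -j\}} \min(f,j)$ converge to $f$ in $L^1(\omega_v^n)$ by dominated convergence (they are dominated by $2f$ for $j$ large), so Lemma \ref{lem: stability of subsolutions} applied with the non-pluripolar reference measure $\omega_v^n$ gives $\theta_u^n \geq f \omega_v^n = \mu$. Since $u \preceq \phi$, the monotonicity result \cite[Theorem 1.2]{WN17} yields $\int_X \theta_u^n \leq \int_X \theta_\phi^n = \mu(X)$. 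Both inequalities are therefore equalities, giving $\theta_u^n = \mu$ and $u \in \Ec(X,\theta,\phi)$.

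The main technical ingredient is the Cegrell decomposition, which is what permits the truncation to fit the rigid form required by Lemma \ref{lem: preparation 1}. A secondary subtlety, characteristic of the general singularity setting, is that because $\phi$ may have dense unbounded locus one cannot use a uniform sup-norm normalization of the $u_j$ and must instead renormalize relative to $\phi$; this is precisely the situation handled by the model type assumption through Lemma \ref{lem: compactness and model type}.
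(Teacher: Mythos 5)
Your proposal is correct and follows essentially the same route as the paper: write $\mu=f\omega_v^n$ with $v$ bounded, truncate to $c_j\min(f,j)\id_{\{\phi>-j\}}\omega_v^n$, solve via Lemma \ref{lem: preparation 1}, extract an $L^1$ limit of normalized solutions, and conclude with Lemma \ref{lem: stability of subsolutions} plus the mass monotonicity of \cite{WN17}. The only cosmetic difference is that the paper does not quote a Cegrell-type decomposition but produces $v$ explicitly, solving $\mu=c\,\omega_\varphi^n$ with $\varphi\in\Ec(X,\omega)$ by \cite[Theorem A]{BEGZ10} and taking $v=e^{\varphi}$, which is bounded and satisfies $\omega_v^n\geq e^{n\varphi}\omega_\varphi^n$.
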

\begin{proof}
It follows from \cite[Theorem A]{BEGZ10} that $\mu = c \omega_{\varphi}^n$  for some $\varphi\in \Ec(X,\omega)$, $\sup_X \varphi=0$ and $c=  \left(\int_X \theta_\phi^n\right) \left( \int_X \omega^n\right)^{-1} >0$. By considering $v:= e^{\varphi}$ which is a bounded $\omega$-psh function  and noting that $\omega_{v}^n \geq e^{n\varphi} \omega_{\varphi}^n$ we can write $\mu = f \omega_v^n$, where  $f\in L^1(X,\omega_v^n)$. Now, for each $j>0$ we set $\mu_j:= c_j\min(f,j) \id_{\{\phi>-j\}} \omega_v^n$, where $c_j>0$ is a normalization constant. Then $c_j \to 1$ as $j\to +\infty$ thus we can assume that $c_j\leq 2$ for all $j$. Note also that $\mu_j \leq 2B \capphi$. It follows from Lemma \ref{lem: preparation 1} that there exists $u_j \in \Ec(X,\theta,\phi)$, $\sup_X u_j=0$ such that $\theta_{u_j}^n =\mu_j$. Up to extracting a subsequence we can assume that $u_j \to u \in \PSH(X,\theta)$ in $L^1(X,\omega^n)$ and $u\leq \phi$.  It follows from Lemma \ref{lem: stability of subsolutions} that $\theta_u^n \geq \mu$. We finally invoke \cite[Theorem 1.2]{WN17} to obtain that $\int_X \theta_u^n= \mu(X)$. Hence the conclusion.
\end{proof}

\begin{theorem}\label{thm: non pluripolar existence}
	Assume that $\mu$ is a non-pluripolar positive measure on $X$. Then there exists a unique $u\in \Ec(X,\theta,\phi)$ such that $\theta_u^n =\mu$ and $\sup_X u=0$.
    
In addition to this, in the particular case when $\mu = f \omega^n$ with $f \in L^p(X,\omega^n), \ p > 1$ we have that
$$\phi - C\Big(p,\omega,\int_X \theta_\phi^n, \| f\|_{L^p}\Big) \leq u \leq \phi\leq 0.$$    
\end{theorem}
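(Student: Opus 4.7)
\emph{Uniqueness.} Suppose $u_1,u_2 \in \Ec(X,\theta,\phi)$ both solve $\theta_{u_i}^n = \mu$ with $\sup_X u_i = 0$. Since $u_i \in \Ec(X,\theta,\phi)$, Theorem \ref{thm: DDL2_E_char} gives $P_\theta[u_i] = \phi$, and since $u_1,u_2 \leq \phi$, the comparison principle (Lemma \ref{lem: DDL2 comparison principle}) applies in both directions to give
\[
\int_{\{u_1<u_2\}} \theta_{u_2}^n = \int_{\{u_1<u_2\}} \theta_{u_1}^n, \qquad \int_{\{u_2<u_1\}} \theta_{u_1}^n = \int_{\{u_2<u_1\}} \theta_{u_2}^n.
\]
A Dinew-type argument using convex combinations together with the relative domination principle \cite[Proposition 3.11]{DDL2} in the class $\Ec(X,\theta,\phi)$ then forces $u_1 \equiv u_2$.

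\emph{Existence.} The plan is to reduce to the capacity-dominated case handled by Proposition \ref{prop: mu dominated by capphi} via truncation. Using \cite[Theorem A]{BEGZ10}, write $\mu = c\,\omega_\varphi^n$ for some $\varphi \in \Ec(X,\omega)$ with $\sup_X \varphi = 0$, and set $v := e^{\varphi} \in \PSH(X,\omega) \cap L^{\infty}(X)$. A direct computation gives $\omega_v^n \geq e^{n\varphi}\,\omega_\varphi^n$, hence $\mu = f\,\omega_v^n$ for some $f \in L^1(\omega_v^n)$. Introduce the truncations
\[
\mu_j := c_j\,\min(f,j)\,\id_{\{\phi>-j\}}\,\omega_v^n,
\]
with $c_j > 0$ chosen so that $\mu_j(X) = \mu(X)$; then $c_j \to 1$ as $j\to\infty$. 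Each $\mu_j$ has bounded density with respect to the bounded total mass measure $\omega_v^n$ and is supported in the set $\{\phi > -j\}$, and from these features I would deduce $\mu_j \leq B_j\,\capphi$ for some $B_j > 0$. Proposition \ref{prop: mu dominated by capphi} then supplies $u_j \in \Ec(X,\theta,\phi)$ with $\theta_{u_j}^n = \mu_j$ and $\sup_X u_j = 0$. Extract an $L^1$-convergent subsequence $u_j \to u \in \PSH(X,\theta)$, necessarily with $u \leq \phi$; Lemma \ref{lem: stability of subsolutions} yields $\theta_u^n \geq \mu$, and the total mass equality \cite[Theorem 1.2]{WN17} forces $\theta_u^n = \mu$ and $u \in \Ec(X,\theta,\phi)$.

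\emph{Quantitative $L^p$ bound.} When $\mu = f\,\omega^n$ with $f \in L^p$, $p > 1$, Proposition \ref{prop: capLp} directly delivers $\mu \leq A\,\capphi^2$ with $A$ controlled by $p, \omega, \|f\|_{L^p}$ and $\int_X \theta_\phi^n$. Since the obtained $u$ lies in $\Ec(X,\theta,\phi)$ with $\sup_X u = 0$, Theorem \ref{thm: DDL2_E_char} ensures $P_\theta[u] = \phi$, which is trivially less singular than $\chi := \phi$. Consequently, Theorem \ref{thm: uniform estimate} applied with $a = 0$ delivers the two-sided bound $\phi - C \leq u \leq \phi \leq 0$, with $C$ depending only on $p, \omega, \|f\|_{L^p}$ and $\int_X \theta_\phi^n$.

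\emph{Main obstacle.} The delicate technical step is the capacity domination $\mu_j \leq B_j\,\capphi$, since for a merely bounded $\omega$-psh $v$ the measure $\omega_v^n$ need not have bounded density against $\omega^n$, preventing a direct application of Proposition \ref{prop: capLp}. I would attack it by first bounding $\omega_v^n$ by $\|v\|_\infty^n$ times the global capacity $\capacity_{V_\theta}$ through a local Chern--Levine--Nirenberg estimate in $\Amp(\theta)$, and then transferring the bound to $\capphi$ via Lemma \ref{lem: compcap} and a perturbation argument in the spirit of Lemma \ref{lem: cap psi vs cap theta}, exploiting that $\mu_j$ is supported where $\phi$ is essentially bounded by $-j$.
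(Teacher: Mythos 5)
Your quantitative $L^p$ part and your appeal to Theorem \ref{thm: uniform estimate} (with $a=0$, via Proposition \ref{prop: capLp} and $P_\theta[u]=\phi$) match the paper, and deferring uniqueness to the Dinew-type argument is acceptable since the paper does the same (Theorem \ref{thm: uniqueness}); note, though, that your displayed ``equalities'' are vacuous here (both measures equal $\mu$), so they carry no content toward uniqueness. The genuine gap is in the existence step, precisely at the point you flag as the ``main obstacle.'' Your truncation $\mu_j = c_j\min(f,j)\,\id_{\{\phi>-j\}}\,\omega_v^n$ with $v=e^{\varphi}$ bounded does not come with the domination $\mu_j \leq B_j\,\capphi$, and this domination is exactly the hypothesis of Proposition \ref{prop: mu dominated by capphi} (and of Lemma \ref{lem: preparation 1}); in the paper that proposition is only ever applied to measures that are dominated by $\capphi$ \emph{by assumption}, the truncations inheriting the bound trivially from $\mu\le B\capphi$. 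Your proposed repair does not close this: a Chern--Levine--Nirenberg bound controls $\omega_v^n$ by the $\omega$-capacity $\textup{Cap}_{\omega}$, and only locally on compact subsets of $\Amp(\theta)$ with non-uniform constants, while there is no direct comparison between $\textup{Cap}_{\omega}$ (or $\textup{Cap}_{V_\theta}$) and $\capphi$ on $\{\phi>-j\}$. Lemma \ref{lem: cap psi vs cap theta} only compares $\textup{Cap}_{V_\theta}$ with $\textup{Cap}_\psi$ when $\psi=(1-\varepsilon)w+\varepsilon V_\theta$ retains an explicit $\varepsilon V_\theta$ component, which $\phi$ does not, and Lemma \ref{lem: compcap} relates $\capphi$ to the Alexander--Taylor quantity $M_\phi$, not to the global capacity. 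So the key inequality your existence proof rests on is left unproved, and the sketched route would not produce it.

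The paper avoids this issue by a different mechanism: a Cegrell-type generalized Radon--Nikodym decomposition of $\mu$ itself. Using Corollary \ref{cor: ext capacity of compact sets} (outer regularity of $\capphi$ on compacts) together with the arguments of \cite[Lemmas 4.17 and 4.26]{DDL2}, the set $\mathcal M_1$ of probability measures $\nu\le\capphi$ is compact and convex, and one writes $\mu=f\nu$ with $\nu\in\mathcal M_1$ and $f\in L^1(X,\nu)$. Then the truncations $c_j\min(f,j)\,\nu$ satisfy $c_j\min(f,j)\,\nu\le 2j\,\capphi$ for free, Proposition \ref{prop: mu dominated by capphi} applies to each of them, and the limit is handled exactly as in the tail of your argument (Lemma \ref{lem: stability of subsolutions} plus the mass comparison from \cite[Theorem 1.2]{WN17}). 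In other words, the decomposition with respect to a capacity-dominated measure $\nu$, rather than with respect to $\omega_v^n$, is the missing idea; your scheme essentially reproduces the internal proof of Proposition \ref{prop: mu dominated by capphi} without the hypothesis that makes it work.
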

\begin{proof} It follows from the arguments in \cite[Lemma 4.17]{DDL2} and Corollary \ref{cor: ext capacity of compact sets} that the set $\mathcal{M}_1$ of probability measures $\nu$ on $X$ such that $\nu\leq  \textup{Cap}_\phi$, is compact and convex. The arguments in \cite[Lemma 4.26]{DDL2} then ensure that $\mu = f \nu$, where $\nu\in \mathcal{M}_1$ and $f \in L^1(X,\nu)$.

According to the previous proposition, for $j \in \Bbb N$ we can find $u_j \in \mathcal E(X,\theta,\phi)$ such that $\sup_X u_j =0$, $u_j \leq \phi\leq 0$ and 
$$\theta_{u_j}^n = c_j \min(f,j) \nu,$$
where $c_j \geq 1$ is arranged so that $\mu(X) = c_j \int_X \min(f,j) \nu$. Hence $c_j\rightarrow 1$. After possibly taking a subsequence, we can assume that $u_j \to_{L^1} u \in \textup{PSH}(X,\theta)$, where $u \leq \phi \leq 0$, $\sup_X u =0$. Finally, Lemma \ref{lem: stability of subsolutions} implies that $\theta_u^n \geq f\nu=\mu$. Since $u \leq \phi \leq 0$, \cite[Theorem 1.1]{WN17} gives that in fact $\theta_u^n = \mu$. The uniqueness is recalled in the next result. 

The last statement follows from Theorem \ref{thm: uniform estimate} in the particular case when $a=0$. The latter theorem can be indeed applied thanks to Proposition \ref{prop: capLp} and the fact that \cite[Theorem 1.3]{DDL2} ensures $P_\theta[u]=P_\theta[\phi]=\phi$.
\end{proof}

Following Dinew  \cite{Dw09}, it is well known in pluripotential theory that existence of full mass solutions implies their uniqueness. The proof of the following theorem is exactly the same as that of \cite[Theorem 4.29]{DDL2}:

\begin{theorem} \label{thm: uniqueness}
Suppose $u,v\in \mathcal{E}(X,\theta,\phi)$ satisfy $\theta_u^n=\theta_v^n$. Then $u-v$ is constant.
\end{theorem}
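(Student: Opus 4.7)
My plan is to follow Dinew's approach \cite{Dw09}, adapted to the relative full-mass setting in \cite[Theorem 4.29]{DDL2}. After adding a constant to $u$, I would normalize so that $\sup_X(u-v)=0$, giving $u \leq v$ on $X$. The goal then becomes to show $u \equiv v$.

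The key step is a midpoint convexity trick combined with a pointwise Brunn-Minkowski-type inequality for non-pluripolar Monge-Amp\`ere measures. For $t \in (0,1)$ set $w_t := (1-t)u + tv$, so that $u \leq w_t \leq v$. By the monotonicity of total non-pluripolar mass \cite[Theorem 1.2]{WN17}, $w_t \in \Ec(X,\theta,\phi)$ with $\int_X \theta_{w_t}^n = \int_X \theta_\phi^n = \int_X \theta_u^n$. On the other hand, the pointwise inequality
\[
\theta_{w_t}^n \;\geq\; (\theta_u^n)^{1-t}(\theta_v^n)^{t} \;=\; \theta_u^n
\]
(the last equality using the hypothesis $\theta_u^n = \theta_v^n$) together with the mass equality just observed forces pointwise equality in the mixed inequality. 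The equality case then implies $\theta_u = \theta_v$ as $(1,1)$-currents on the ample locus, i.e.\ $i\ddbar(u-v) = 0$ on $\Amp(\theta)$. Since $u-v$ is then pluriharmonic on the connected Zariski-open set $\Amp(\theta)$ and satisfies $u - v \leq 0$ with $\sup_X(u-v) = 0$, one concludes $u \equiv v$.

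The principal technical hurdle is justifying the pointwise Brunn-Minkowski inequality---and in particular its equality case---at the level of non-pluripolar products, which are defined only through the cutoff formula \eqref{eq: k_approx_measure}. I would address this by applying the classical inequality to the locally bounded truncations $\max(u, V_\theta - k)$ and $\max(v, V_\theta - k)$, where Bedford-Taylor theory applies, and then passing to the limit $k \to \infty$ using the monotonicity theorem \cite[Theorem 1.1]{DDL2} together with the plurifine locality of the complex Monge-Amp\`ere operator, exactly as in \cite[Theorem 4.29]{DDL2}.
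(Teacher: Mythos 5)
Your reduction to the statement ``all mixed measures $\theta_u^j\wedge\theta_v^{n-j}$ equal $\mu:=\theta_u^n$'' is fine: the mass bound for $w_t$ via \cite{WN17}, the termwise use of the mixed Monge--Amp\`ere inequality of Dinew (\cite{Diw09a}, extended to non-pluripolar products in \cite[Proposition 1.11]{BEGZ10}), and the comparison of total masses all go through. The genuine gap is the next step, where you claim that equality in this inequality forces $i\ddbar(u-v)=0$ on $\Amp(\theta)$. No such equality-case statement is available for non-pluripolar products, and it is false as a local/pointwise inference. First, $\mu$ may vanish identically on large open sets (nothing in the hypotheses prevents this), and there all the mixed measures vanish as well, so ``equality'' is the vacuous statement $0\geq 0$ and carries no information about $\theta_u-\theta_v$; for instance $u=|z_1|^2$ and $v=2|z_1|^2$ in $\mathbb{C}^2$ have all mixed Monge--Amp\`ere measures equal to zero while $\ddbar(u-v)\neq 0$. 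Second, even on the set where $\mu>0$, the equality case of the pointwise mixed-discriminant (Alexandrov) inequality, which does give proportionality of the two forms for smooth strictly positive metrics, has no meaning here: $u$ and $v$ are merely quasi-psh, have no pointwise Hessians, and the non-pluripolar products are defined only through the limit of the truncated measures \eqref{eq: k_approx_measure}; passing an equality-case analysis through that limit is precisely the hard point, and it is not addressed by ``applying the classical inequality to $\max(u,V_\theta-k)$ and letting $k\to\infty$.'' In effect your step 5 assumes the content of the theorem. (The concluding maximum-principle step is also only sketched, but it could be repaired by extending the pluriharmonic, bounded-above function $u-v$ across the pluripolar set $X\setminus\Amp(\theta)$; the real obstruction is the equality-case claim.)

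For comparison, the proof the paper invokes (that of \cite[Theorem 4.29]{DDL2}, following \cite{Dw09}) does not argue through any equality case. Dinew's scheme derives uniqueness \emph{from existence}: assuming $u-v$ is non-constant, one uses the domination principle to produce sublevel sets of positive $\mu$-mass on both sides, solves auxiliary Monge--Amp\`ere equations whose right-hand sides are suitably rescaled restrictions of $\mu$, and combines the mixed Monge--Amp\`ere inequality with the comparison/domination principles in $\Ec(X,\theta,\phi)$ to reach a mass contradiction. So the mixed inequality enters only as a measure-theoretic tool, never as a pointwise rigidity statement about the currents $\theta_u$ and $\theta_v$. If you want a correct write-up, you should follow that route (or reproduce the argument of \cite[Theorem 4.29]{DDL2} verbatim, as the paper does), rather than the Brunn--Minkowski equality-case heuristic.
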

Next we point out that in case $\int_X \theta_{\phi}^n = 0$ the above uniqueness result fails, even if $\phi$ is a model potential: 
\begin{remark}\label{rem: non-unique_zero_mass} Consider $\CPP^1\times \CPP^1$ equipped with the K\"ahler form $\omega = \pi_1^{*}\omega_{FS}+ \pi_2^{*}\omega_{FS}$. Using \cite[Remark 3.3]{DDL2} it is possible to find two model potentials $\alpha ,\beta \in \textup{PSH}(X,\omega)$ such that $\alpha$ is strictly less singular than $\beta$, and $\int_X \omega_\alpha^n = \int_X \omega_\beta^n =0$ (indeed, just take $\alpha := \lim_{\varepsilon \to 0} P[(1-\varepsilon)\pi_2^* v]$ and $\beta := \lim_{\varepsilon \to 0} P[(1-\varepsilon)\phi]$ in the cited example).

In particular, there exists $C>0$ such that $\gamma:=\max(\alpha,\beta+C)$ has the same singularity type as $\alpha$, but $\alpha-\gamma$ is non-constant. But  since $[\alpha]=[\gamma]$,  \cite[Theorem 1.2]{WN17} gives that $\int_X\omega_\alpha^2=\int_X \omega_\gamma^2=0$, hence $\omega_\alpha^2=\omega_\gamma^2=0$.
\end{remark}

\section{The Aubin-Yau equation}\label{sect: AY equation}
With Theorem \ref{thm: non pluripolar existence} in hand, as in \cite[Theorem 6.1]{BEGZ10}, using Schauder's fixed point theorem we will  solve the following Aubin-Yau equation: 
\begin{equation}
	\label{eq: AY}
	\theta_u^n = e^{\lambda u} \mu, \ u \in \Ec(X,\theta,\phi),
\end{equation}
where $\lambda >0$. We recall the version of Schauder's fixed point theorem that we will need:
\begin{theorem}[Schauder] \label{thm: Schauder}
Let $X$ be a Banach space, and let $K \subset X$
be a non-empty, compact and convex set. Then given any continuous mapping $F: K \to K$ there exists $x \in K$ such that $F(x)=x$.
\end{theorem}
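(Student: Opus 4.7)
The plan is to reduce to the Brouwer fixed point theorem in finite dimensions via a sequence of finite-dimensional approximations (Schauder projections), and then use compactness of $K$ to extract a genuine fixed point in the limit. At each scale $\varepsilon>0$, Brouwer yields an approximate fixed point within distance $\varepsilon$; compactness of $K$ then promotes these to an honest fixed point.

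First I would use compactness of $K$ to choose, for each $\varepsilon>0$, finitely many points $x_1^\varepsilon,\dots,x_{N_\varepsilon}^\varepsilon\in K$ such that the open balls $B(x_i^\varepsilon,\varepsilon)$ cover $K$. Define a continuous partition of unity on $K$ by
$$\lambda_i(x):=\frac{\max(0,\,\varepsilon-\|x-x_i^\varepsilon\|)}{\sum_{j=1}^{N_\varepsilon}\max(0,\,\varepsilon-\|x-x_j^\varepsilon\|)},$$
where the denominator never vanishes since every $x\in K$ belongs to at least one covering ball. The Schauder projection $P_\varepsilon:K\to X$ defined by $P_\varepsilon(x):=\sum_i\lambda_i(x)\,x_i^\varepsilon$ is continuous, and since $K$ is convex with $x_i^\varepsilon\in K$, it maps $K$ into the finite-dimensional compact convex set $K_\varepsilon:=\textup{conv}(x_1^\varepsilon,\dots,x_{N_\varepsilon}^\varepsilon)\subset K$. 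The crucial estimate
$$\|P_\varepsilon(x)-x\|\le\sum_i\lambda_i(x)\,\|x_i^\varepsilon-x\|\le\varepsilon,\qquad x\in K,$$
holds because $\lambda_i(x)>0$ only when $\|x-x_i^\varepsilon\|<\varepsilon$.

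Next I would consider the continuous self-map $G_\varepsilon:=P_\varepsilon\circ F\big|_{K_\varepsilon}:K_\varepsilon\to K_\varepsilon$. Since $K_\varepsilon$ is homeomorphic to a compact convex subset of some $\mathbb{R}^m$, the classical Brouwer fixed point theorem (taken as a topological prerequisite) produces $x_\varepsilon\in K_\varepsilon$ with $P_\varepsilon(F(x_\varepsilon))=x_\varepsilon$. By the previous estimate applied to the point $F(x_\varepsilon)\in K$,
$$\|F(x_\varepsilon)-x_\varepsilon\|=\|F(x_\varepsilon)-P_\varepsilon(F(x_\varepsilon))\|\le\varepsilon.$$

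Finally, choosing $\varepsilon_k\searrow 0$ and using compactness of $K$, I would extract a subsequence with $x_{\varepsilon_k}\to x\in K$. Continuity of $F$ gives $F(x_{\varepsilon_k})\to F(x)$, and combined with $\|F(x_{\varepsilon_k})-x_{\varepsilon_k}\|\to 0$ we conclude $F(x)=x$.

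The only genuinely nontrivial ingredient is Brouwer's theorem itself, which I would invoke as a black box; beyond that, the main care-points are verifying that $P_\varepsilon$ is well-defined (the partition-of-unity denominator is strictly positive on $K$) and that $G_\varepsilon$ genuinely maps $K_\varepsilon$ into itself so that Brouwer applies in a fixed finite-dimensional space rather than merely inside $K$.
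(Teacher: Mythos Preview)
Your argument is the standard and correct reduction of Schauder's theorem to Brouwer via Schauder projections: the partition of unity is well-defined on $K$, the projection $P_\varepsilon$ lands in the finite-dimensional simplex $K_\varepsilon\subset K$, Brouwer gives an $\varepsilon$-approximate fixed point, and compactness of $K$ plus continuity of $F$ closes the argument. There is no gap.

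As for comparison: the paper does not prove this theorem at all; it simply cites \cite[Theorem B.2, page 302]{Ta11} and moves on, since Schauder's theorem is used only as a black-box tool in the fixed-point argument for the Aubin--Yau equation. Your proof is precisely the classical one that appears in such references, so you have effectively supplied what the paper outsources.
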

We refer the reader to \cite[Theorem B.2, page 302]{Ta11} for a proof.

\begin{lemma}\label{lem: compact convex}
Let $\phi\in \PSH(X,\theta)$ be such that $P[\phi]=\phi$ and $\int_X \theta_{\phi}^n>0$. Then the set $F:= \{u \in \PSH(X,\theta,\phi) \setdef \int_X u \, \omega^n =0\}$ is a compact convex subset of $L^1(X,\omega^n)$.
\end{lemma}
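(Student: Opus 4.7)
The convexity of $F$ is immediate: each of the three defining properties (being $\theta$-psh, being more singular than $\phi$, and having zero $\omega^n$-average) is preserved under convex combinations, since if $u_i \leq \phi + C_i$ then $tu_1+(1-t)u_2 \leq \phi + \max(C_1,C_2)$.

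For compactness in $L^1(X,\omega^n)$, my plan is to realize $F$ as the continuous image of the relatively compact set $\mathcal{F} := \{v \in \PSH(X,\theta) : \sup_X (v - \phi) = 0\}$ supplied by Lemma \ref{lem: compactness and model type} (applicable because $P[\phi]=\phi$ and $\int_X \theta_\phi^n>0$). Setting $V := \int_X \omega^n$, I consider the continuous affine map $\Phi: \mathcal{F} \to L^1(X,\omega^n)$ given by
$$\Phi(v) := v - V^{-1}\int_X v\,\omega^n.$$
I will check that $\Phi$ is a bijection onto $F$. For surjectivity, given $u \in F$ the scalar $s := \sup_X(u-\phi)$ is finite because $u \preceq \phi$, so $v := u - s$ lies in $\mathcal{F}$; the normalization $\int_X u\,\omega^n = 0$ then forces $s = -V^{-1}\int_X v\,\omega^n$, so $u = \Phi(v)$. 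The inclusion $\Phi(\mathcal{F}) \subseteq F$ is clear from the definition, and injectivity follows because two elements of $\mathcal{F}$ differing by a nonzero constant cannot both satisfy $\sup_X(\cdot - \phi)=0$. Consequently $F = \Phi(\mathcal{F})$ is relatively compact in $L^1(X,\omega^n)$.

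It remains to argue that $F$ is $L^1$-closed. Suppose $u_j \in F$ converges in $L^1$ to some $u_\infty \in \PSH(X,\theta)$. Decomposing $u_j = v_j + s_j$ as above, relative compactness of $\mathcal{F}$ yields a uniform bound on $\|v_j\|_{L^1(\omega^n)}$, hence also on $|s_j| \leq V^{-1}\|v_j\|_{L^1}$. Passing to a subsequence, $v_j \to v_\infty$ in $L^1$ and $s_j \to s_\infty \in \mathbb{R}$, so $u_\infty = v_\infty + s_\infty$. Since $v_j \leq \phi$ on the full $\omega^n$-measure set $\{\phi > -\infty\}$, the $L^1$-limit satisfies $v_\infty \leq \phi$ almost everywhere, and the standard sub-mean-value argument for $\theta$-psh functions upgrades this to $v_\infty \leq \phi$ pointwise. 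Thus $u_\infty \leq \phi + s_\infty$, so $u_\infty \in \PSH(X,\theta,\phi)$, while $L^1$-convergence preserves the integral, so $\int_X u_\infty\,\omega^n = 0$, giving $u_\infty \in F$.

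The only mildly delicate point is this closedness step: the pointwise singularity bound $u_j \preceq \phi$ must survive passage to the $L^1$-limit, which is precisely what the uniform control on the shifts $s_j$, coming from relative compactness of $\mathcal{F}$ in Lemma \ref{lem: compactness and model type}, makes possible.
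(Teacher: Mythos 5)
Your proposal is correct, but it runs along a different track than the paper's own argument. The paper works entirely with the normalization $\int_X u\,\omega^n=0$: the ambient set $\{u\in \PSH(X,\theta)\ :\ \int_X u\,\omega^n=0\}$ is already compact in $L^1$ by \cite[Proposition 2.7]{GZ05} (no model hypothesis needed for this), so only closedness of $F$ must be checked; there the same reference gives a uniform bound $\sup_X u_j\le C$, and the model property enters through a one-line candidate argument: $u_j-C$ is a competitor for $P[\phi]$, hence $u_j\le P[\phi]+C=\phi+C$ with a single constant $C$, which survives the limit. You instead re-parametrize $F$ by the sup-normalized family $\mathcal F$ through the affine bijection $\Phi$ and invoke Lemma \ref{lem: compactness and model type} twice, once for relative compactness of $F$ and once for the uniform bound on the shifts $s_j$ that lets the singularity bound pass to the limit. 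Both proofs hinge on the same phenomenon---model type singularity turns the individual constants in $u_j\preceq\phi$ into one uniform constant---but the paper extracts it directly from $P[\phi]=\phi$, while you extract it from the compactness characterization; your route is a bit longer and uses the model hypothesis in both halves, whereas the paper's is shorter and confines it to the closedness step. Minor points, at the same level of rigor as the paper: in your closedness step one should pass to a further subsequence converging $\omega^n$-a.e.\ before asserting $v_\infty\le\phi$ a.e., and observe that $v_\infty$ (hence $u_\infty$) is indeed a.e.\ equal to a $\theta$-psh function since $\mathcal F$ lies in a compact family of such functions; neither affects validity.
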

\begin{proof}
The convexity is clear. Since the set of normalized (by $\int_X u\, \omega^n =0$) $\theta$-psh functions is compact in the $L^1$-topology (see \cite[Proposition 2.7]{GZ05}) it suffices to prove that $F$ is closed. For this purpose let   $(u_j)$ be a sequence of functions in $F$ which converges in $L^1(X,\omega^n)$ to $u\in \PSH(X,\theta)$. We want to prove that $u\in F$.  Since $\int_X u\, \omega^n =0$ it suffices to show that $u\preceq \phi$. Again, \cite[Proposition 2.7]{GZ05} yields  $\sup_X u_j\leq C$ for a uniform constant $C>0$. It follows that $u_j -C$ is a candidate defining $P[\phi]$, hence $u_j-C\leq P[\phi] = \phi$. Letting $j\to +\infty$, we arrive at $u\leq \phi+C$, finishing the proof.
\end{proof}
\begin{theorem}
	\label{thm: AY} Let $\phi \in \textup{PSH}(X,\theta)$ such that $\phi = P[\phi]$ and $\int_X \theta_\phi^n >0$. Assume that $\mu$ is a non-pluripolar positive Borel measure on $X$ satisfying $0< \int_X d \mu <+\infty$. Then there exists a unique $v\in \Ec(X,\theta,\phi)$ solving \eqref{eq: AY}.
    
    In addition to this, in the particular case when $\mu = f \omega^n$ with $f \in L^p(X,\omega^n), \ p > 1,$ we have that
$$\phi - C \leq v \leq \phi+C,$$
where $C:=C\Big(\lambda, p,\omega,\int_X \theta_\phi^n, \| f\|_{L^p}\Big)>0$.
\end{theorem}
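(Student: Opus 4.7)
The plan is to run the Schauder fixed-point argument of \cite[Theorem 6.1]{BEGZ10}, now supported by the unrestricted Monge--Amp\`ere solver of Theorem \ref{thm: non pluripolar existence} and the compact convex set $F := \{u \in \textup{PSH}(X,\theta,\phi) : \int_X u\,\omega^n = 0\}$ from Lemma \ref{lem: compact convex}. Given $w \in F$, I set $M_w := \int_X e^{\lambda w}\,d\mu$; this is finite since $\sup_X w$ is uniformly bounded on $F$ by \cite[Proposition 2.7]{GZ05}, and strictly positive since $\mu$ is non-pluripolar. Applying Theorem \ref{thm: non pluripolar existence} to the non-pluripolar measure $(\int_X\theta_\phi^n / M_w)\,e^{\lambda w}\mu$, whose total mass matches $\int_X\theta_\phi^n$, produces a unique $\tilde u_w \in \mathcal{E}(X,\theta,\phi)$ with $\sup_X \tilde u_w = 0$ solving $\theta_{\tilde u_w}^n = (\int_X\theta_\phi^n/M_w)\,e^{\lambda w}\mu$. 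Define $T(w) := \tilde u_w - (\int_X\omega^n)^{-1}\int_X \tilde u_w\,\omega^n$, which belongs to $F$; note moreover that since $\sup_X \tilde u_w = 0$ and $\tilde u_w \preceq \phi$, $\tilde u_w$ is a candidate defining $P_\theta[\phi] = \phi$, so $\tilde u_w \leq \phi$.

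The main technical step is the continuity of $T$ on $F$. Given $w_j \to w$ in $L^1(X,\omega^n)$ with $w_j, w \in F$, the envelope trick $v_j := (\sup_{k\geq j}w_k)^* \searrow w$ combined with monotone/dominated convergence against the uniform upper bound on $F$ yields $\limsup_j \int_X e^{\lambda w_j}\,d\mu \leq \int_X e^{\lambda w}\,d\mu$; the matching lower bound, and hence $L^1(\mu)$-convergence of $e^{\lambda w_j}$ to $e^{\lambda w}$, comes from Lemma \ref{lem: convergence} after establishing $\sup_j \int_X w_j^2\,d\mu < +\infty$. This last bound is immediate when $\mu \leq B\,\textup{Cap}_\phi$ via Lemma \ref{lem: energy capacity estimate} applied to $\tilde u_{w_j}$, and the general $\mu$ is reduced to this subcase by the approximation scheme of Proposition \ref{prop: mu dominated by capphi}. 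Along any $L^1$-cluster point $\tilde u_{w_j} \to \tilde u_\infty$ (supplied by compactness of $F$), Lemma \ref{lem: stability of subsolutions} gives $\theta_{\tilde u_\infty}^n \geq (\int_X\theta_\phi^n/M_w)\,e^{\lambda w}\mu$. Since $\tilde u_{w_j} \leq \phi$ passes to the $L^1$-limit, $\tilde u_\infty \leq \phi$ and hence $\int_X \theta_{\tilde u_\infty}^n \leq \int_X \theta_\phi^n$ by \cite[Theorem 1.2]{WN17}; combined with the reverse inequality this forces equality of measures and $\tilde u_\infty \in \mathcal{E}(X,\theta,\phi)$. Uniqueness (Theorem \ref{thm: uniqueness}) then identifies $\tilde u_\infty$ with $\tilde u_w$ up to a constant, which the zero-average normalisation pins down, giving $T(w_j) \to T(w)$.

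Schauder's theorem (Theorem \ref{thm: Schauder}) supplies a fixed point $w = T(w) \in F$, for which $\theta_w^n = (\int_X\theta_\phi^n/M_w)\,e^{\lambda w}\mu$; the shifted potential $v := w - \lambda^{-1}\log(M_w/\int_X\theta_\phi^n)$ then solves $\theta_v^n = e^{\lambda v}\mu$ and lies in $\mathcal{E}(X,\theta,\phi)$. Uniqueness is standard: if $u_1, u_2 \in \mathcal{E}(X,\theta,\phi)$ both solve the equation, the comparison principle \cite[Corollary 3.6]{DDL2} on $\{u_1 < u_2\}$ gives $\int_{\{u_1 < u_2\}} e^{\lambda u_2}\,d\mu \leq \int_{\{u_1 < u_2\}} e^{\lambda u_1}\,d\mu$, and the strict monotonicity of $t \mapsto e^{\lambda t}$ forces $\mu(\{u_1 < u_2\}) = 0$; by symmetry $u_1 = u_2$.

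For the quantitative estimate when $\mu = f\omega^n$ with $f \in L^p$, the essential ingredient is a uniform two-sided control on $M_w$: the upper bound $M_w \leq e^{\lambda C_0}\|f\|_{L^1}$ is immediate from $\sup_X w \leq C_0$ on $F$, while the lower bound comes from a bootstrap combining the Cauchy--Schwarz inequality $\mu(X)^2 \leq M_w \int_X e^{-\lambda w}\,d\mu$, H\"older, and the quantitative lower bound of Theorem \ref{thm: non pluripolar existence} applied to $\tilde u_w$. Once $M_w$ is controlled, the right-hand density $(\int_X\theta_\phi^n/M_w)\,e^{\lambda w}f$ has a uniformly controlled $L^p$-norm, so Theorem \ref{thm: non pluripolar existence}'s quantitative estimate yields $\phi - C \leq \tilde u_w \leq 0$; combined with the uniform bound on $k_w$ and the uniform upper bound $w \leq \phi + C_0$ on $F$ from the proof of Lemma \ref{lem: compact convex}, this produces $\phi - C \leq v \leq \phi + C$ with $C$ depending only on $\lambda, p, \omega, \int_X\theta_\phi^n, \|f\|_{L^p}$. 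The central obstacle is the continuity of $T$: because $\mu$ is only non-pluripolar, $L^1(\omega^n)$-convergence of $w_j$ does not by itself give any $\mu$-a.e.\ control, so it is precisely the interplay of Lemmas \ref{lem: convergence}, \ref{lem: energy capacity estimate}, \ref{lem: stability of subsolutions} and the uniqueness Theorem \ref{thm: uniqueness} that rescues the argument.
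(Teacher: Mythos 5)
Your overall skeleton (Schauder's theorem on the compact convex set $F$ of Lemma \ref{lem: compact convex}, solving the twisted equation via Theorem \ref{thm: non pluripolar existence}, identifying cluster points through Lemma \ref{lem: stability of subsolutions}, mass comparison and Theorem \ref{thm: uniqueness}) is the paper's, but the continuity step contains a genuine gap. To get $e^{\lambda w_j}\to e^{\lambda w}$ in $L^1(\mu)$ you propose to first establish $\sup_j\int_X w_j^2\,d\mu<+\infty$ and invoke Lemma \ref{lem: convergence} for the $w_j$; but Lemma \ref{lem: energy capacity estimate} applies to potentials in $\Ec(X,\theta,\phi)$ and bounds $\int_X|\tilde u_{w_j}-\phi|^2d\mu$ for the \emph{outputs} $\tilde u_{w_j}$ (and even then only when $\mu\leq B\capphi$ and only in terms of $\int_X|\tilde u_{w_j}-\phi|\theta_{\tilde u_{w_j}}^n$, which you have not bounded); it says nothing about the arbitrary \emph{inputs} $w_j\in F$, which are merely in $\PSH(X,\theta,\phi)$ and need not even be $\mu$-integrable for a general non-pluripolar $\mu$, so the claimed bound is false in general. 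Moreover the half-sentence reduction of the general $\mu$ ``to the subcase $\mu\leq B\capphi$ via Proposition \ref{prop: mu dominated by capphi}'' does not help here: the map $T$ whose continuity you need is built from the original $\mu$, and if you instead mean to run the whole fixed-point scheme for truncated measures and pass to the limit in the Aubin--Yau equation, that is a separate argument you have not given. The fix is cheap and is what the paper does: since $w_j\leq C_0$ uniformly on $F$, the functions $e^{\lambda w_j}$ are uniformly bounded quasi-psh functions converging in $L^1(X,\omega^n)$, so Lemma \ref{lem: convergence} (or \cite[Lemma 11.5]{GZ17}) applies to \emph{them} directly, with the square-integrability hypothesis trivially satisfied; no integrability of $w_j$ against $\mu$ is ever needed.

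The quantitative estimate is also not established by your argument. Your lower bound on $M_w$ is circular (the constant in Theorem \ref{thm: non pluripolar existence} applied to $\tilde u_w$ depends on $\|c_we^{\lambda w}f\|_{L^p}$, hence on the very quantity $c_w=\int_X\theta_\phi^n/M_w$ you are trying to control), and the Cauchy--Schwarz/H\"older route would bring in quantities such as $\int_X e^{-\lambda q w}\omega^n$ or $\int_X e^{\pm\lambda\phi}f\omega^n$ which are not controlled by $\lambda,p,\omega,\int_X\theta_\phi^n,\|f\|_{L^p}$ (and $\int_X e^{-\lambda\phi}f\omega^n$ may even be infinite); also ``$k_w$'' is undefined. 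The paper avoids the fixed-point construction entirely here and argues a posteriori on any solution $v$ of $\theta_v^n=e^{\lambda v}f\omega^n$: after normalizing $\int_X\theta_\phi^n=\int_Xf\omega^n=1$, comparing total masses forces $\sup_X v\geq 0$, Jensen plus H\"older (using the uniform $L^q(\omega^n)$ bound for normalized $\theta$-psh functions) gives $\sup_X v\leq C$, and then Theorem \ref{thm: uniform estimate} with density $e^{\lambda\sup_X v}f$ yields $\phi-C\leq v-\sup_X v\leq\phi$. Finally, a smaller point: from $\mu(\{u_1<u_2\})=0$ you only get $u_1=u_2$ $\mu$-a.e.; to conclude equality everywhere you must add the domination principle (\cite[Proposition 3.11]{DDL2}), noting $\theta_{u_2}^n(\{u_1<u_2\})=0$, or simply quote Lemma \ref{lem: comparison} as the paper does.
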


\begin{proof}For simplicity we only treat the case $\lambda =1$, as the proof of the general result is the same. We recall that the set $F$ defined in Lemma \ref{lem: compact convex}  is a compact convex subset of $L^1(X,\omega^n)$. 
    
    We consider the map $m: F \rightarrow F$ defined as $m(\varphi) =u$ where $u\in \Ec(X,\theta,\phi)$ is the unique function such that $\int_X u\, \omega^n=0$ and   $\theta_u^n = c(\varphi)e^{\varphi} \mu$, where $c(\varphi)$ is a positive constant such that $c(\varphi)\int_X e^{\varphi} d\mu =\int_X \theta_{\phi}^n$. Then $m$ is well-defined thanks to Theorem \ref{thm: non pluripolar existence} and Theorem \ref{thm: uniqueness}. 
    
    We prove that $m$ is continuous on $F$. Assume that $(\varphi_j)$ is a sequence in $F$ which converges in $L^1(X,\omega^n)$ to $\varphi\in F$. We want to prove that $m(\varphi_j)$ converges in $L^1(X,\omega^n)$ to $m(\varphi)$. Since the sequence $(m(\varphi_j))_j$ is contained in a compact set it suffices to prove that any cluster point of this sequence is $m(\varphi)$. For this purpose, after extracting a subsequence we can assume that $m(\varphi_j)$ converges in $L^1(X,\omega^n)$ to $u\in F$. The goal is to prove that $\theta_u^n = c(\varphi) e^{\varphi} \mu$. From the normalization $\int_X \varphi_j \, \omega^n=0$ we obtain a uniform bound for $\sup_X \varphi_j$ (see \cite[Proposition 2.7]{GZ05}). It then follows that $e^{\varphi_j}$ is uniformly bounded, hence by \cite[Lemma11.5]{GZ17} we have $\int_X e^{\varphi_j}d\mu \rightarrow \int_X e^{\varphi}d\mu$. It thus follows that 
 $$c(\varphi_j)= \int_X\theta_{\phi}^n \cdot \left( \int_X e^{\varphi_j}  d\mu \right)^{-1}\to \int_X \theta_{\phi}^n \cdot  \left( \int_X e^{\varphi} d\mu\right)^{-1} =c(\varphi).$$
It thus follows from Lemma \ref{lem: convergence} together with Lemma \ref{lem: stability of subsolutions} that $\theta_u^n \geq c(\varphi) e^{\varphi} \mu$. Since $u \preceq \phi$ we must have equality because of  \cite[Theorem 1.2]{WN17}.  Hence $u=m(\varphi)$ and the continuity of $m$ is proved. It thus follows from Schauder's fixed point theorem (Theorem \ref{thm: Schauder})  that there exists $u\in F$ such that $m(u)=u$. This means that $u+\log c(u) \in \Ec(X,\theta,\phi)$ solves \eqref{eq: AY}. 
    Uniqueness is a consequence of the next result. 
    
    To argue the last statement, suppose that $\mu = f\omega^n, \ f \in L^p(X,\omega^n), p>1$. For simplicity we can assume that $\int_X \theta_\phi^n = \int_X f\omega^n = 1$. 
    
    Suppose that $\theta_u^n = e^{u}f\omega^n$ for some $u \in \Ec(X,\theta,\phi)$. First we argue that $\sup_X u$ is under control. By comparing the total masses we get that $\sup_X u \geq 0$. By compactness, we have that given $q \geq 1$, there exists $C:=C(q) >0$ such that  $\int_X|v|^q \omega^n \leq C$ for all $v \in \textup{PSH}(X,\theta)$, with $\sup_X v =0$. Using Jensen's and H\"older's inequality we obtain that:
\begin{flalign*}
0= \log \int_X \theta_u^n & =\sup_X u  + \log \int_X e^{u-\sup_X u} f \omega^n \\
& \geq \sup_X u +\int_X (u-\sup_X u)\, f \omega^n \geq \sup_X u -C^{\frac{1}{q}}\| f \|_p,
\end{flalign*}
where $q =\frac{p}{p-1}$. Consequently, $0 \leq \sup_X u \leq C^{\frac{1}{q}}\| f \|_p$. Since $\theta_u^n \leq e^{\sup_X u} f \omega^n$, Theorem \ref{thm: uniform estimate} implies that 
$$\phi - C\Big( p,\omega,\int_X \theta_\phi^n, \| f\|_{L^p}\Big) \leq u - \sup_X u\leq \phi\leq 0.$$
Given that  $\sup_X u$ is also under control, the required estimate follows.
\end{proof}

Lastly, we provide a comparison principle for \eqref{eq: AY}, whose proof carries over word for word from \cite[Lemma 4.24]{DDL2}:
\begin{lemma}\label{lem: comparison}
	\label{lem: comp_sol_subsol}
	Let $\lambda>0$. Assume that $\varphi\in \Ec(X,\theta,\phi)$ is a solution of \eqref{eq: AY}, while $\psi\in \Ec(X,\theta,\phi)$ satisfies $\theta_{\psi}^n \geq e^{\lambda \psi}\mu.$
	Then $\varphi\geq \psi$ on $X$. 
\end{lemma}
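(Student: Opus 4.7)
I will follow the standard ``comparison principle $+$ $\varepsilon$-shift $+$ domination'' template, exploiting the strict monotonicity of $t \mapsto e^{\lambda t}$. The plan has two steps: first show that $\mu(\{\varphi < \psi\}) = 0$, and then upgrade this to the pointwise inequality $\psi \leq \varphi$ via the domination principle of \cite[Proposition 3.11]{DDL2}. A key enabler is that $\varphi \in \Ec(X,\theta,\phi)$ forces $P_\theta[\varphi] = \phi$ (by Theorem~\ref{thm: DDL2_E_char}), which is less singular than $\psi$; this places us exactly in the hypothesis of the generalized comparison principle, Lemma~\ref{lem: DDL2 comparison principle}.

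For the first step, fix $\varepsilon > 0$ and apply Lemma~\ref{lem: DDL2 comparison principle} with $u := \varphi + \varepsilon$ and $v := \psi$, noting that $P_\theta[\varphi + \varepsilon] = P_\theta[\varphi] = \phi$ (since $P_\theta[\cdot]$ only depends on the singularity type) and that $\theta_{\varphi+\varepsilon}^n = \theta_\varphi^n$. Substituting the hypotheses $\theta_\varphi^n = e^{\lambda \varphi}\mu$ and $\theta_\psi^n \geq e^{\lambda \psi}\mu$, together with the pointwise estimate $e^{\lambda \psi} \geq e^{\lambda \varepsilon} e^{\lambda \varphi}$ valid on $\{\varphi + \varepsilon < \psi\}$, one obtains
\begin{equation*}
e^{\lambda \varepsilon} \int_{\{\varphi + \varepsilon < \psi\}} e^{\lambda \varphi}\, d\mu \;\leq\; \int_{\{\varphi + \varepsilon < \psi\}} e^{\lambda \psi}\, d\mu \;\leq\; \int_{\{\varphi + \varepsilon < \psi\}} e^{\lambda \varphi}\, d\mu.
\end{equation*}
Since $e^{\lambda \varepsilon} > 1$ and $e^{\lambda \varphi} > 0$ outside the pluripolar set $\{\varphi = -\infty\}$ (which the non-pluripolar measure $\mu$ does not charge), the only way to reconcile this chain is $\mu(\{\varphi + \varepsilon < \psi\}) = 0$. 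Letting $\varepsilon \searrow 0$ along a countable sequence then gives $\mu(\{\varphi < \psi\}) = 0$, whence
\begin{equation*}
\theta_\varphi^n(\{\varphi < \psi\}) \;=\; \int_{\{\varphi < \psi\}} e^{\lambda \varphi}\, d\mu \;=\; 0.
\end{equation*}

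The second step is then immediate: since $\varphi, \psi \in \Ec(X,\theta,\phi)$ and the non-pluripolar Monge-Amp\`ere measure $\theta_\varphi^n$ puts no mass on $\{\varphi < \psi\}$, the domination principle \cite[Proposition 3.11]{DDL2} yields $\psi \leq \varphi$ on $X$. I do not anticipate a real obstacle; the only conceptual checks are that the version of the comparison principle I invoke (Lemma~\ref{lem: DDL2 comparison principle}) truly applies in this relative-singularity setting, and that the positivity of $e^{\lambda \varphi}$ on $\{\varphi > -\infty\}$ is robust against the pluripolar exceptional set. Both are direct consequences of results already recalled from the excerpt.
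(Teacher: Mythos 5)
Your proof is correct and is essentially the same argument as the paper's, which simply refers to \cite[Lemma 4.24]{DDL2}: comparison principle on the set $\{\varphi<\psi\}$, strict monotonicity of $t\mapsto e^{\lambda t}$ to conclude $\theta_\varphi^n(\{\varphi<\psi\})=0$, and then the domination principle to get $\psi\leq\varphi$. The $\varepsilon$-shift is harmless but not needed, since $e^{\lambda\psi}>e^{\lambda\varphi}$ already holds strictly on $\{\varphi<\psi\}$, so the equality case of the unshifted comparison inequality gives $\mu(\{\varphi<\psi\})=0$ directly.
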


\section{Log concavity of volume} \label{sect: log concave}

In this short section, we give the proof of our main application, which is a direct consequence of solvability of complex Monge-Amp\`ere equations with prescribed singularity type:\begin{theorem}\label{thm: log concavity}
	Let $T_1,...,T_n$ be closed positive $(1,1)$-currents  on a compact K\"ahler manifold $X$. Then 
\begin{equation}\label{eq: log_ineq}
	\int_X \langle T_1 \wedge ...\wedge T_n\rangle \geq \bigg(\int_X \langle T_1^n\rangle \bigg)^{\frac{1}{n}} ... \bigg(\int_X \langle T_n^n\rangle \bigg)^{\frac{1}{n}}.
\end{equation}
	In particular, $T\mapsto \left ( \int_X \langle T^n \rangle\right )^{\frac{1}{n}}$ is concave on the set of positive currents, and so is the map $T\mapsto \log \left ( \int_X \langle T^n \rangle\right )$. 
\end{theorem}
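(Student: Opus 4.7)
The plan is to reduce \eqref{eq: log_ineq} to a pointwise Alexandrov-type inequality for positive $(1,1)$-currents, by using Theorem \ref{thm: non pluripolar existence} to produce representatives whose non-pluripolar Monge-Amp\`ere measures are proportional to $\omega^n$. This mirrors the strategy of \cite[Theorem 1.8]{DDL2}, which handled the small unbounded locus case; the crucial new ingredient is that Theorem \ref{thm: non pluripolar existence} removes that restriction.

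\textbf{Reduction to model potentials.} If $\int_X \langle T_j^n \rangle = 0$ for some $j$, the right hand side of \eqref{eq: log_ineq} vanishes, so I may assume all self-intersections are positive. Writing $T_j = \theta^j + i\ddbar u_j$, I set $\phi_j := P_{\theta^j}[u_j]$. Since $P_{\theta^j}[\phi_j] = \phi_j$ and $\int_X (\theta^j_{\phi_j})^n \geq \int_X (\theta^j_{u_j})^n > 0$, Theorem \ref{thm: DDL2_E_char} yields $u_j \in \mathcal{E}(X,\theta^j,\phi_j)$. By the mixed-mass extension of \cite[Theorem 1.1]{DDL2}, passing from $u_j$ to the model representative $\phi_j$ inside the relative full mass class preserves both the mixed integral on the left of \eqref{eq: log_ineq} and each self-intersection integral on the right. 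Thus I may assume each $u_j = \phi_j$ is a model potential with $\int_X (\theta^j_{\phi_j})^n > 0$.

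\textbf{Special solutions via Theorem A.} For each $j$, Theorem \ref{thm: non pluripolar existence} produces $\psi_j \in \textup{PSH}(X,\theta^j)$ with $[\psi_j] = [\phi_j]$ satisfying
\[
(\theta^j_{\psi_j})^n = c_j\, \omega^n, \qquad c_j := \frac{\int_X \langle T_j^n \rangle}{\int_X \omega^n}.
\]
Since $[\psi_j] = [\phi_j]$ for every $j$, the invariance of mixed non-pluripolar masses under equal singularity types gives $\int_X \theta^1_{\phi_1} \wedge \ldots \wedge \theta^n_{\phi_n} = \int_X \theta^1_{\psi_1} \wedge \ldots \wedge \theta^n_{\psi_n}$. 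Applying the pointwise mixed Monge-Amp\`ere inequality for positive $(1,1)$-currents then yields
\[
\theta^1_{\psi_1} \wedge \ldots \wedge \theta^n_{\psi_n} \;\geq\; \prod_{j=1}^n \left(\frac{(\theta^j_{\psi_j})^n}{\omega^n}\right)^{1/n} \omega^n \;=\; \prod_{j=1}^n c_j^{1/n}\, \omega^n
\]
as measures on $X$. Integrating and substituting back for $c_j$ produces exactly \eqref{eq: log_ineq}.

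\textbf{Main obstacle.} The delicate step is to justify the pointwise mixed inequality at the level of the non-pluripolar products $\theta^j_{\psi_j}$ without assuming the $\psi_j$ have small unbounded locus. For smooth positive $(1,1)$-forms this is Alexandrov's inequality for mixed discriminants, a piece of linear algebra. To transfer it to our setting, the natural route is to truncate $\psi_j^k := \max(\psi_j, V_{\theta^j}-k)$ (producing potentials with minimal singularities, hence locally bounded on $\Amp(\{\theta^j\})$), apply the classical pointwise inequality to the Bedford-Taylor products on $\Amp(\{\theta^j\})$, and then let $k \to +\infty$ using the monotone convergence of non-pluripolar Monge-Amp\`ere products together with plurifine locality. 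The entire force of the argument lies in the fact that we may now choose the $\psi_j$ purely via Theorem \ref{thm: non pluripolar existence}, with no additional hypothesis on the unbounded locus of $\phi_j$.
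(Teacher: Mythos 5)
Your proposal is essentially the paper's own proof: solve $(\theta^j_{\psi_j})^n = c_j\,\omega^n$ in the relative full mass class of the model potential $P_{\theta^j}[u_j]$ via Theorem \ref{thm: non pluripolar existence}, identify $\int_X \theta^1_{\psi_1}\wedge\cdots\wedge\theta^n_{\psi_n}$ with $\int_X\langle T_1\wedge\cdots\wedge T_n\rangle$ by the invariance of mixed masses (\cite[Proposition 2.1, Theorem 2.2]{DDL2}), and conclude with the mixed Monge--Amp\`ere inequality. The only divergence is that the step you single out as the ``main obstacle'' is not an obstacle at all: it is exactly \cite[Proposition 1.11]{BEGZ10} (building on \cite{Diw09a}), which holds for arbitrary non-pluripolar products with no small unbounded locus hypothesis, and the truncation-plus-plurifine-locality argument you sketch is essentially its proof.
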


\begin{proof}We can assume that the classes of $T_j$ are big and their masses are non-zero. Other\-wise the right-hand side of the inequality to be proved is zero.  In fact, after rescaling, we can assume that $\int_X \omega^n=\int_X \langle T^n_j \rangle=1, \ j \in \{ 1,\ldots,n\}$.

Consider smooth closed $(1,1)$-forms $\theta^j$,  and $u_j \in \psh(X,\theta^j)$  such that $T_j = \theta^j_{u_j}$.  We know from \cite[Theorem 3.12]{DDL2} that $P_{\theta^j}[u_j]$ is a model  potential. 

For each $j=1,...,n$  Theorem \ref{thm: non pluripolar existence} insures existence of $\varphi_j\in \mathcal{E}(X,\theta^j,P_{\theta}[u_j])$ such that $\big(\theta^j_{\varphi_j}\big)^n = \omega^n$. 
A combination of \cite[Proposition 2.1]{DDL2} and \cite[Theorem 2.2]{DDL2} then gives
$$
\int_X \theta^1_{\varphi_1} \wedge ... \wedge \theta^n_{\varphi_n}  = \int_X  \theta^1_{P_{\theta^1}[u_1]} \wedge ... \wedge \theta^n_{P_{\theta^n}[u_n]}= \int_X  \theta^1_{u_1} \wedge ... \wedge \theta^n_{u_n} = \int_X\langle T_1 \wedge .... \wedge T_n \rangle .
$$
Finally, an application of  \cite[Proposition 1.11]{BEGZ10} (building on \cite{Diw09a}) gives $\theta^1_{\varphi_1} \wedge \ldots \wedge \theta^n_{\varphi_n} \geq \omega^n$, finishing the proof of \eqref{eq: log_ineq}. 

To prove that $T\mapsto \left ( \int_X \langle T^n \rangle\right )^{{1}/{n}}$ is concave, we take closed positive $(1,1)$-currents $T_1,T_2$ and we argue that
$$\bigg(\int_X \langle ((1-t)T_1 + tT_2)^n \rangle\bigg)^{\frac{1}{n}} \geq (1-t) \bigg(\int_X \langle T_1^n \rangle\bigg)^{\frac{1}{n}}+t \bigg(\int_X \langle T_2^n \rangle\bigg)^{\frac{1}{n}}, \ \ \ t \in [0,1].$$
However this follows, as multilinearity allows to expand the left hand side, and multiple application of \eqref{eq: log_ineq} yields the desired inequality.
Finally,  concavity of $T\mapsto \log \left ( \int_X \langle T^n \rangle\right )$ is equivalent to that of $T\mapsto \left ( \int_X \langle T^n \rangle\right )^{{1}/{n}}$ since the latter is homogeneous of degree $1$.
\end{proof}

\section{Relation to convex geometry}  \label{sect: real MA}

The goal of this section is to point out a strong connection between our $\phi$-relative pluripotential theory and ``$P$-convex geometry''. This latter subject has been explored recently in \cite{BB13, Ba17, BBL18}, motivated by the study of K\"ahler-Ricci solitons, Bergman measures, and Fekete points. We closely follow the terminology of \cite{Ba17}.

A convex body $P\subset \mathbb{R}^n$ is a compact convex subset with non-empty interior. We introduce $h_P: \mathbb{R}^n \rightarrow \mathbb{R}$ , the \emph{support function} of $P$:
$$ h_P(x)=\sup_{p\in P} \,\langle x,p\rangle.$$
This is a  homogeneous convex function, i.e. $h_P(tx)=th_P(x)$ for $t>0$. 

Following \cite{BB13}, we let $\mathcal{L}_P(\mathbb{R}^n)$ denote the space of all convex functions $h$ on $\mathbb{R}^n$ such that $h\leq h_P+ C$, for some constant $C\in \Bbb R$. We recall that the \emph{sub-gradient} of a convex function $h$ at a point $x\in \mathbb{R}^n$ is the following set valued function  
$$dh(x):=\{q\in \mathbb{R}^n \,:\, h(y) \geq h(x)+\langle q, y-x\rangle , \; \forall y\in \mathbb{R}^n \}.$$

The Legendre transform of $h_P$ takes values $0$ on $P$ and $+\infty$ on $\Bbb R^n \setminus P$. Hence $dh_P(\mathbb{R})\subset P$. Since $dh_P(0)=P$ it then follows that  $dh_P(\mathbb{R}^n)=P$.
For a smooth convex function $h:\Bbb R^n \to \Bbb R$ the \emph{real Monge-Amp\`ere measure}  of $h$ is defined as 
$$
\textup{MA}_{\Bbb R}(h):= \frac{n!}{2^n} \det \left(\frac{\partial^2h}{\partial x_j \partial x_k} \right)dx_1 \wedge \ldots \wedge d x_n.
$$
We chose the factor $\frac{n!}{2^n}$ to allow for a very attractive identity between the real and complex Monge-Amp\`ere operators below (see \eqref{eq: real_complex_MA_rel}). If $h$ is strictly convex and smooth then, by a change of variables, the formula above can be rewritten in the following form 
\begin{equation}\label{eq: Alexandrov_measure}
\MA_{\mathbb{R}} (h)(E)= \frac{n!}{2^n} \int_{dh(E)} d\,\Vol_n,
\end{equation}
where $E\subset \mathbb{R}^n$ is a Borel set, and $\textup{Vol}_n$ is the Euclidean measure of $\Bbb R^n$.  This allows to define  the \emph{real Monge-Amp\`ere measure} (in the sense of Alexandrov) for any convex function $h: \mathbb{R}^n \rightarrow \mathbb{R}$, with \eqref{eq: Alexandrov_measure} still in place (see \cite[Proposition 3.1]{RT69}). In particular, we have  
\begin{equation}\label{vol convex body}
\int_{\mathbb{R}^n} \MA_{\mathbb{R}} (h_P)=  \frac{n!}{2^n} \Vol_n(P).
\end{equation}

We define the Log map $L: (\mathbb{C}^*)^n\rightarrow \mathbb{R}_+^n$ by $L(z)=L(z_1, \dots , z_n)= (\log |z_1|, \dots, \log |z_n|).$ 
The \emph{logarithmic indicator function} of a convex body $P\subset \mathbb{R}^n$  is defined as 
\begin{equation*}
H_P(z) :=  h_P \circ L (z) = \sup_{p\in P} \log |z^p| := \sup_{p\in P} \log \left ( |z_1|^{p_1} ... |z_n|^{p_n} \right ).
\end{equation*}
In particular, $H_P$ is a plurisubharmonic function on $(\mathbb{C}^*)^n$. 

In analogy with the above, we consider the following class of plurisubharmonic functions (see \cite[page 10]{Ba17}, or \cite[Section 4]{Be09}):
$$\mathcal{L}_P((\mathbb{C}^*)^n):= \{\psi\in \psh((\mathbb{C}^*)^n) \;:\; \psi \leq H_P +C_\psi \; {\rm{on}}\; (\mathbb{C}^*)^n\}.$$
Using this terminology, it is elementary to see that 
\begin{equation}\label{eq: real_complex_CMA_op_id}
\mathcal{L}_P(\mathbb{R}^n)\circ L=\{\psi\in  \mathcal{L}_P((\mathbb{C}^*)^n)\,:\, \psi\; \rm{is}\;  (S^1)^n {- \rm invariant}\}.
\end{equation}
It is well known (\cite[Lemma 2.2 and Section 2.2]{BB13}) that, given $(S^1)^n$-invariant $\psi\in  \mathcal{L}_P((\mathbb{C}^*)^n)$, $\psi=h\circ L$ with $h\in \mathcal{L}_P(\mathbb{R}^n)$, the real and complex Monge-Amp\`ere measures satisfy
\begin{equation}\label{eq: real_complex_MA_rel}
L_\star (i\partial \bar{\partial}\psi)^n = \MA_{\mathbb{R}}(h).
\end{equation}
It then follows that $\int_E \MA_{\mathbb{R}} (h)= \int_{L^{-1}(E)} (i \partial \bar{\partial} \psi)^n$
for any Borel set $E\subset \mathbb{R}^n$. In particular, the above combined with \eqref{vol convex body} tells us that
\begin{equation}\label{vol convex body 2}
\int_{(\mathbb{C}^*)^n}(i \partial \bar{\partial} H_P)^n = \frac{n!}{2^n}\Vol_n(P).
\end{equation}

In what follows we only consider convex bodies $P \subset \mathbb{R}_+^n$, unless otherwise specified. All corresponding results for general convex bodies $P$ can be easily deduced by making a translation, however we choose to avoid the cumbersome notation that comes with the treatment of arbitrary $P$.

Given a convex body $P \subset \mathbb{R}_+^n$, let $r>0$ be big enough such that $P \subset r \Sigma$, where $\Sigma$ is the unit simplex in $\mathbb{R}^n$. 
Noting that $(\mathbb{C^*})^n \subset \mathbb{C}^n \cup H_\infty=\mathbb{CP}^n$,  recall that $\omega_{FS}|_{(\Bbb {C^*})^n} =\frac{i}{2}\partial\bar \partial \log(1 + \| z\|^2)$ and the $(S^1)^n$-action of $(\Bbb C^*)^n$ extends to an action on $\Bbb{CP}^n$. For $z\in (\mathbb{C}^*)^n$ we define
\begin{equation}
	\label{eq: model_function_P}
	\phi_P(z):= H_{P}(z) -\frac{r}{2} \log (1+\|z\|^2).
\end{equation}
The choice of $r$ ensures that   $\phi_P$ is bounded from above on $(\mathbb{C}^*)^n$. Since $\Bbb{CP}^n\setminus (\mathbb{C}^*)^n$ is pluripolar, $\phi_P$ can be extended as a $r \omega_{FS}$-psh function on $\Bbb {CP}^n$. 
Moreover, by \eqref{vol convex body 2} and the fact that the non-pluripolar product does not charge pluripolar sets, we have
\begin{equation}\label{volume compact-real}
\int_{\mathbb{CP}^n} \left(r \omega_{FS}+ i \partial \bar{\partial} \phi_{P}\right)^n =\int_{(\mathbb{C}^*)^n}(i \partial \bar{\partial} H_{P})^n= \int_{\mathbb{R}^n} \MA_{\mathbb{R}}( h_{P})=  \frac{n!}{2^n}\Vol_n (P).
\end{equation}
Let $P_1,\ldots,P_n \subset \Bbb R_+^n$ such that  $P_j \subset r\Sigma$ for some $r>0$. By the above we also have:
\begin{equation}\label{mixed volume compact-real}
\int_{\mathbb{CP}^n} \left(r \omega_{FS}+ i \partial \bar{\partial} \phi_{P_1}\right)\wedge \cdots \wedge \left(r \omega_{FS}+ i \partial \bar{\partial} \phi_{P_n}\right) =\int_{(\mathbb{C}^*)^n}(i \partial \bar{\partial} H_{P_1}) \wedge \cdots \wedge (i \partial \bar{\partial} H_{P_n}).
\end{equation}

The construction  in \eqref{eq: model_function_P} gives rise to the following bijection
 $$ 
 \tau_P:\mathcal{L}_P((\mathbb{C}^*)^n) \xrightarrow{\sim} \{\varphi\in \psh(\mathbb{CP}^n, r\omega_{FS}) \,:\, \varphi  \preceq\phi_{P} \}.
 $$
Restricting to $(S^1)^n$-invariant elements we get another bijection, again denoted by $\tau_P$:
\begin{equation}\label{eq: convex_S_1_isom}
\tau_P:\mathcal{L}_P(\mathbb{R}^n) \xrightarrow{\sim} \{\varphi\in \psh(\mathbb{CP}^n, r\omega_{FS})\,:\, \varphi  \preceq\phi_{P} \; \rm{and}\; \varphi\mid_{(\mathbb{C}^*)^n} \; \textup{is}\; {(S^1)^n} \textup{-invariant}\}.
\end{equation}
Since the $(S^1)^n$-action of $(\Bbb C^*)^n$ extends to an action on $\Bbb{CP}^n$, with an abuse of terminology, in what follows we will say that $\varphi\in \psh(\mathbb{CP}^n, r\omega_{FS})$ is $(S^1)^n$-invariant meaning that $\varphi$ is invariant under the extended action on the whole $\Bbb{CP}^n$.

\subsection{Real Monge-Amp\`ere equations}

We fix momentarily a convex body $P \subset \Bbb R_+^n$ and $r>0$ such that $P \subset r\Sigma$.
Following the terminology of \cite[Section 2.3.1]{BB13}, we say that $h\in \mathcal{L}_P(\mathbb{R}^n)$ has $P$-relative full mass, i.e., 
$$h \in \mathcal E_P(\mathbb{R}^n) \subset  \mathcal{L}_P(\mathbb{R}^n),$$ 
if $\int_{\Bbb R^n} \MA_{\mathbb{R}}(h)=\int_{\mathbb{R}^n } \MA_{\mathbb{R}}(h_P)$ . 

 We start our analysis with a simple consequence of \eqref{volume compact-real} that gives a clear relation between the classes $\mathcal{E}_P(\mathbb{R}^n)$ and $\mathcal{E}(\mathbb{C}\mathbb{P}^n, r\omega_{FS}, \phi_P)$:
\begin{prop}\label{masses real-compact} The following hold:
\begin{itemize}
\item[(i)] if $h\in \mathcal{E}_P(\mathbb{R}^n)$, then $\tau_P(h)\in \mathcal{E}(\mathbb{C}\mathbb{P}^n, r\omega_{FS}, \phi_P)$;
\item[(ii)] if $\varphi\in\mathcal{E}(\mathbb{C}\mathbb{P}^n, r\omega_{FS}, \phi_P) $ and $\varphi$ is $(S^1)^n$-invariant, then $\tau_P^{-1}(\varphi) \in \mathcal{E}_P(\mathbb{R}^n)$.
\end{itemize}
\end{prop}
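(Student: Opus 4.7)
The strategy is to reduce both statements to a single mass identity, namely
\[
\int_{\mathbb{CP}^n} \bigl(r\omega_{FS} + i\partial\bar\partial \tau_P(h)\bigr)^n = \int_{\mathbb{R}^n} \MA_{\mathbb{R}}(h) \qquad \text{for every } h \in \mathcal{L}_P(\mathbb{R}^n).
\]
Once this identity is available, (i) is immediate: since $\tau_P(h) \preceq \phi_P$ by definition of the bijection, and $h \in \mathcal{E}_P(\mathbb{R}^n)$ means $\int_{\mathbb{R}^n}\MA_\mathbb{R}(h) = \int_{\mathbb{R}^n}\MA_\mathbb{R}(h_P)$, formula \eqref{volume compact-real} combined with the identity above yields $\int_{\mathbb{CP}^n}(r\omega_{FS}+i\partial\bar\partial\tau_P(h))^n = \int_{\mathbb{CP}^n}(r\omega_{FS}+i\partial\bar\partial\phi_P)^n$, i.e. $\tau_P(h) \in \mathcal{E}(\mathbb{CP}^n,r\omega_{FS},\phi_P)$. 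Statement (ii) is the reverse direction, applied to $h := \tau_P^{-1}(\varphi)$ for an $(S^1)^n$-invariant $\varphi$, where the same identity together with $\phi_P = \tau_P(h_P)$ forces equality of the real Monge-Amp\`ere masses.

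\textbf{Proving the mass identity.} On $(\mathbb{C}^*)^n$ we have $\tau_P(h)(z) = h\circ L(z) - \tfrac{r}{2}\log(1+\|z\|^2)$. Since $\omega_{FS} = \tfrac{i}{2}\partial\bar\partial\log(1+\|z\|^2)$ on $\mathbb{C}^n$, the currents $r\omega_{FS} + i\partial\bar\partial\tau_P(h)$ and $i\partial\bar\partial(h\circ L)$ coincide on $(\mathbb{C}^*)^n$. Because $\mathbb{CP}^n\setminus(\mathbb{C}^*)^n$ is pluripolar and non-pluripolar products place no mass on pluripolar sets, the total mass of $(r\omega_{FS}+i\partial\bar\partial\tau_P(h))^n$ on $\mathbb{CP}^n$ equals its restriction to $(\mathbb{C}^*)^n$, which, by plurifine locality of non-pluripolar products (see \cite[Proposition 1.4]{BEGZ10}), coincides with the non-pluripolar complex Monge-Amp\`ere measure of $h\circ L$. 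The push-forward formula \eqref{eq: real_complex_MA_rel}, applicable since $\psi := h\circ L$ is the $(S^1)^n$-invariant lift of $h \in \mathcal{L}_P(\mathbb{R}^n)$, then gives
\[
\int_{(\mathbb{C}^*)^n} \bigl(i\partial\bar\partial(h\circ L)\bigr)^n = \int_{\mathbb{R}^n} L_\star(i\partial\bar\partial \psi)^n = \int_{\mathbb{R}^n} \MA_\mathbb{R}(h),
\]
completing the identity.

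\textbf{Main obstacle.} The only delicate point is the identification of the two notions of complex Monge-Amp\`ere measure near $\mathbb{CP}^n\setminus(\mathbb{C}^*)^n$: the non-pluripolar product $(r\omega_{FS}+i\partial\bar\partial\tau_P(h))^n$ is a priori defined globally by the BEGZ truncation procedure, whereas $(i\partial\bar\partial(h\circ L))^n$ is a local notion on $(\mathbb{C}^*)^n$. The matching of the two relies on plurifine locality together with the fact that $\tau_P(h)$ and $h\circ L$ differ by the smooth potential $-\tfrac{r}{2}\log(1+\|z\|^2)$ on $(\mathbb{C}^*)^n$, so that the truncation defining the non-pluripolar product can be carried out equivalently with either potential on this open set; combined with pluripolarity of the complement, the global mass reduces to the local computation governed by \eqref{eq: real_complex_MA_rel}. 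The remainder of the argument is routine bookkeeping with \eqref{volume compact-real} and the bijection \eqref{eq: convex_S_1_isom}.
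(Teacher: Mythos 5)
Your proposal is correct and follows essentially the same route as the paper, which presents this proposition as a direct consequence of \eqref{volume compact-real}: your mass identity $\int_{\mathbb{CP}^n}(r\omega_{FS}+i\partial\bar\partial\tau_P(h))^n=\int_{\mathbb{R}^n}\MA_{\mathbb{R}}(h)$ is exactly the computation behind \eqref{volume compact-real}, carried out for a general $h\in\mathcal{L}_P(\mathbb{R}^n)$ via \eqref{eq: real_complex_MA_rel} and the fact that the non-pluripolar product does not charge $\mathbb{CP}^n\setminus(\mathbb{C}^*)^n$. The derivation of (i) and (ii) from this identity is the intended bookkeeping, so there is nothing to add.
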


Next we point out a fundamental result linking $(S^1)^n$-invariant model type singularities and convex bodies:
\begin{theorem}\label{thm: model_sing_convex_body} Fix $r>0$. Then the following hold:
\begin{itemize}
	\item [(i)]  Given a convex body $P \subset r \Sigma$, the potential $\phi_P \in \textup{PSH}(\Bbb{CP}^n,r\omega_{FS})$ from \eqref{eq: model_function_P} has model type singularity and $\int_X (r\omega_{FS}+ i\ddbar \phi_P)^n=\frac{n!}{2^n}\Vol_n(P)>0$.
	\item [(ii)] Given $\phi \in \textup{PSH}(\Bbb{CP}^n, r\omega_{FS})$ with model type singularity  that is $(S^1)^n$-invariant and $\int_{\Bbb{CP}^n} (r\omega_{FS} + i\ddbar \phi_P)^n>0$, there exists a convex body $P \in r\Sigma$ such that $[\phi]=[\phi_P]$.
\end{itemize}
\end{theorem}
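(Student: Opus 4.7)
\medskip\noindent\textbf{Proof plan.}

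For part (i), the mass identity is immediate from \eqref{volume compact-real} combined with $\Vol_n(P)>0$, which holds because $P$, being a convex body, has non-empty interior. The main task is thus to establish the model-type singularity $[\phi_P] = [P_{r\omega_{FS}}[\phi_P]]$. My approach is to exploit $(S^1)^n$-symmetry: since $\phi_P$ is $(S^1)^n$-invariant, averaging any candidate in the definition of $P_{r\omega_{FS}}[\phi_P]$ over the compact group $(S^1)^n$ keeps it a candidate, so $P_{r\omega_{FS}}[\phi_P]$ is itself $(S^1)^n$-invariant. Via the correspondence $\tau_P$ in \eqref{eq: convex_S_1_isom}, it then corresponds to a convex function $g \in \mathcal{L}_P(\mathbb{R}^n)$. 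The problem thus reduces to a convex-analytic statement: the $\mathcal{L}_P$-envelope of $h_P$ coincides with $h_P$ up to a bounded additive term, which holds because $h_P = \sup_{p \in P}\langle \cdot, p\rangle$ is positively homogeneous and maximal (modulo bounded constants) in $\mathcal{L}_P(\mathbb{R}^n)$.

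For part (ii), I would first extract a convex function associated with $\phi$: on the chart $(\mathbb{C}^*)^n$, the function $\phi + \frac{r}{2}\log(1+\|z\|^2)$ is plurisubharmonic and $(S^1)^n$-invariant, hence of the form $h\circ L$ for a convex function $h:\mathbb{R}^n \to \mathbb{R}$. Normalizing $\sup_{\mathbb{CP}^n} \phi = 0$ gives $h(x) \leq \frac{r}{2}\log\big(1+\sum_i e^{2x_i}\big) \leq h_{r\Sigma}(x) + C$, so $h \in \mathcal{L}_{r\Sigma}(\mathbb{R}^n)$. Define $P := \overline{dh(\mathbb{R}^n)}$, a closed convex subset of $r\Sigma$. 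The non-vanishing mass hypothesis $\int_{\mathbb{CP}^n}(r\omega_{FS}+i\partial\bar\partial\phi)^n>0$ transfers, via \eqref{eq: real_complex_MA_rel} and \eqref{eq: Alexandrov_measure}, to $\frac{n!}{2^n}\Vol_n(P) = \int_{\mathbb{R}^n}\MA_{\mathbb{R}}(h) > 0$, showing that $P$ has non-empty interior and is therefore a convex body.

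It remains to show $[\phi] = [\phi_P]$, equivalently $h = h_P + O(1)$ on $\mathbb{R}^n$. Here I would invoke the model-type assumption $\phi = P_{r\omega_{FS}}[\phi] + O(1)$: by the $(S^1)^n$-averaging argument from part (i), the envelope $P_{r\omega_{FS}}[\phi]$ corresponds under $\tau_P$ to a convex function $g$ agreeing with $h$ up to bounded terms, and by construction $dg(\mathbb{R}^n)\subset P$. Part (i) applied to $\phi_P$ identifies $h_P$ as a ``model'' convex function in the same class. Legendre duality — exploiting that $h_P^*$ is the indicator of $P$ and that $g^*$ is then a bounded function on $P$ — forces $g = h_P + O(1)$, and hence $h = h_P + O(1)$.

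The main obstacle I anticipate is the precise translation of the envelope operator $P_{r\omega_{FS}}[\cdot]$, together with its USC regularization, into its convex-analytic counterpart under $\tau_P$: one has to verify that averaging general (not necessarily invariant) candidates against $(S^1)^n$ yields exactly the $\mathcal{L}_P$-envelope on the real side, and that pluripolar exceptional sets on $\mathbb{CP}^n\setminus(\mathbb{C}^*)^n$ do not affect the identification. The argument that $h_P$ is uniquely characterized modulo bounded terms as the maximal element of $\mathcal{L}_P(\mathbb{R}^n)$ with subgradient image $P$ relies crucially on the positive homogeneity of $h_P$ and on the finiteness of the Legendre dual $h_P^*$ on $P$.
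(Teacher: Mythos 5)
Your part (i) takes a genuinely different route from the paper (which transfers compactness of the normalized class $\{h\in\mathcal{L}_P(\mathbb{R}^n):\sup(h-h_P)=0\}$ from Berman--Berndtsson and then reruns the argument of Lemma \ref{lem: compactness and model type}), and it can be made to work, but two points need care. First, the invariance of $P_{r\omega_{FS}}[\phi_P]$ is best obtained by noting that the family of candidates is invariant under the torus action (or by dominating each candidate $\psi$ by $\bigl(\sup_{g\in (S^1)^n}\psi\circ\sigma_g\bigr)^*$); the pointwise average of a candidate is again a candidate but does \emph{not} dominate it, so averaging alone does not identify the envelope. Second, and more importantly, ``$h_P$ is maximal modulo bounded constants in $\mathcal{L}_P(\mathbb{R}^n)$'' is just the definition of $\mathcal{L}_P$ and only gives each candidate $u\le h_P+C_u$ with a constant depending on $u$; the whole content is a \emph{uniform} bound on the sup. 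This is where homogeneity must actually be used: an invariant candidate corresponds to a convex $u$ with $u\le f_r:=\tfrac r2\log(1+\sum_j e^{2x_j})$ and $u\le h_P+C_u$, and then $u(x)\le \tfrac1t u(tx)+(1-\tfrac1t)u(0)\le h_P(x)+\tfrac{C_u}{t}+f_r(0)\to h_P(x)+f_r(0)$ as $t\to\infty$. Also, saying that $P_{r\omega_{FS}}[\phi_P]$ ``corresponds to some $g\in\mathcal{L}_P(\mathbb{R}^n)$'' presupposes exactly the membership $P[\phi_P]\preceq\phi_P$ you are trying to prove; argue candidate-wise instead.

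The genuine gap is in part (ii), at the assertion that ``$g^*$ is then a bounded function on $P$.'' That boundedness is equivalent (by biconjugation) to $g\ge h_P-O(1)$, i.e.\ to the hard half of the conclusion, and nothing you have said delivers it: it does not follow from $\overline{dg(\mathbb{R}^n)}\subset P$, nor from full relative mass. For instance, in dimension one take $h^*(p)=-\log p-\log(1-p)$ on $(0,1)$ and $+\infty$ elsewhere; then $h=h^{**}$ is a finite convex function with $\overline{dh(\mathbb{R})}=[0,1]=:P$, with full mass $\int_{\mathbb{R}}\MA_{\mathbb{R}}(h)=\int_{\mathbb{R}}\MA_{\mathbb{R}}(h_P)$, yet $h(x)-h_P(x)\sim-\log x\to-\infty$, so $h^*$ is unbounded on $P$ and $h\not\simeq h_P$. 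So the model-type hypothesis on $\phi$ must enter in a substantive way, and the only way it is used in the paper is through the identity $P[\phi]=P[\phi_P]$, which is \emph{not} formal Legendre duality: it is precisely the characterization of the relative full mass class by envelopes (Theorem \ref{thm: DDL2_E_char}, i.e.\ [DDL2, Theorem 1.3]), applied after one knows $h\le h_P+C$ (your upper bound, same as the paper's) and $\phi\in\mathcal{E}(\mathbb{CP}^n,r\omega_{FS},\phi_P)$. As written, your duality step is circular; to repair it, conclude from $h\preceq h_P$ and equality of masses that $\phi$ has full mass relative to $\phi_P$, invoke Theorem \ref{thm: DDL2_E_char} to get $P[\phi]=P[\phi_P]$, and then finish with the model-type hypothesis on $\phi$ and part (i): $[\phi]=[P[\phi]]=[P[\phi_P]]=[\phi_P]$. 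Alternatively you would have to prove a real Monge--Amp\`ere analogue of that characterization theorem, which your proposal does not do.
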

\begin{proof}
First we argue (i). By \cite[Proposition 2.8]{BB13}, the set 
$${\mathcal{L}_P(\mathbb{R}^n)}_0:= \left\{h \in\mathcal{L}_P(\mathbb{R}^n)\, :\, \sup_{\mathbb{R}^n} (h-h_P)=0 \right\}$$ is compact.
Moreover, we have that
$\sup_{\mathbb{R}^n} (h-h_P) = \sup_{\mathbb{CP}^n} \big( \tau_P(h \circ L)-\phi_P\big).$
Consequently the following set is also compact:
$$\Big\{ \varphi\in \psh(\mathbb{CP}^n,r\omega_{FS})\,:\, \varphi  \preceq\phi_{P}, \;  \varphi \; {\rm{is}}\; (S^1)^n\textup{-invariant 
 and }  \sup_{\mathbb{CP}^n} (\varphi-\phi_P)=0  \Big\}.$$
Since $\phi_P$ is $(S^1)^n$-invariant, so is $P_{r\omega_{FS}}[\phi_P]$, and exactly the same argument as the one in  Lemma \ref{lem: compactness and model type} ensures that $\phi_P$ has model type singularity. From \eqref{volume compact-real} we also have that 
$$
\int_{\mathbb{CP}^n} (r\omega_{FS} + i\ddbar \phi_P)^n= \frac{n!}{2^n}\Vol_n(P)>0.
$$

Now we argue (ii). Using the construction of \eqref{eq: model_function_P}, to $\phi$  we associate a convex function $h: \Bbb R^n \to \Bbb R$ and \eqref{eq: real_complex_CMA_op_id} such that $h(x) \leq \frac{r}{2} \log (1+e^{2x_1}+...+e^{2x_n}) + C$ for some $C>0$, and
$$\int_{\mathbb{R}^n} \MA_{\mathbb{R}} (h) =\int_{(\mathbb{C^*})^n} \Big(i\ddbar \big(\phi(z) + \frac{r}{2}\log(1 + \|z\|^2)\big)\Big)^n = \int_{\mathbb{C}\mathbb{P}^n} (r\omega_{FS} + i\ddbar \phi)^n.$$
Then the closure of the set $dh(\mathbb{R}^n)$ is a convex body in $\mathbb{R}_+^n$ which will be denoted by $P$. Since $h \leq \frac{r}{2} \log (1+e^{2x_1}+...+e^{2x_n}) + C$ it follows that $P\subset r\Sigma$. 

By convexity of $P$ the Euclidean measure of $\partial P$ is zero, implying that 
$$\int_{\mathbb{R}^n} \MA_{\mathbb{R}^n}(h_P)=\Vol_n(P)=\Vol(dh(\Bbb R^n))=\int_{\mathbb{R}^n} \MA_{\mathbb{R}^n}(h)>0.$$
By comparing the support of the Legendre transforms, it follows that $h \leq h_P + C$. Together with the above, this gives $h\in \mathcal{E}_P(h_P)$, further implying that $\phi \in \mathcal{E}(\mathbb{CP}^n,r\omega_{FS}, \phi_P)$. 
Now \cite[Theorem 1.3 (iii)]{DDL2} implies that $P[\phi_P]=P[\phi]$. It follows from the first part of the theorem that $\phi_P$ has model type singularity.  Since $[\phi]$ is also a model type singularity, by definition we obtain that $[\phi] = [P[\phi]]=[P[\phi_P]]=[\phi_P]$, finishing the proof.
\end{proof}

With the duality of the above two results in hand, we can provide the real  Monge-Amp\`ere analog of Theorem \ref{thm: non pluripolar existence}, which recovers a result of  Berman-Berndtsson \cite[Theorem 2.19]{BB13}, obtained using completely different variational techniques: 
\begin{theorem}\label{resolution real MA}
Let $P$ be a convex body in $\mathbb{R}^n$ and let $\mu$ be a positive Borel measure on $\mathbb{R}^n$ such that  $\mu (\mathbb{R}^n)= \int_{\mathbb{R}^n} \MA_{\mathbb{R}}(h_P)$. Then there exists $h\in \mathcal{E}_P(\mathbb{R}^n)$, unique up to an additive constant, such that
\begin{equation}\label{real MA}
\MA_{\mathbb{R}}(h)=\mu.
\end{equation}
\end{theorem}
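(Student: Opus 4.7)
The plan is to lift the equation to a complex Monge-Amp\`ere equation on $\mathbb{CP}^n$, apply Theorem \ref{thm: non pluripolar existence}, and translate the solution back via the isomorphism \eqref{eq: convex_S_1_isom}. After a translation we may assume $P \subset \mathbb{R}_+^n$ and fix $r > 0$ with $P \subset r\Sigma$.

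First I would construct the lift: let $\tilde{\mu}$ denote the unique $(S^1)^n$-invariant positive Borel measure on $\mathbb{CP}^n$, supported in $(\mathbb{C}^*)^n$, satisfying $L_{\ast}\tilde\mu = \mu$; concretely $\tilde\mu = \int_{\mathbb{R}^n} \sigma_x \, d\mu(x)$, where $\sigma_x$ is the normalized Haar measure on the torus $T^n_x := L^{-1}(x)$. The hypothesis on $\mu$ combined with \eqref{volume compact-real} gives
$$\tilde\mu(\mathbb{CP}^n) = \mu(\mathbb{R}^n) = \int_{\mathbb{R}^n} \MA_{\mathbb{R}}(h_P) = \int_{\mathbb{CP}^n}(r\omega_{FS} + i\ddbar \phi_P)^n > 0,$$
and Theorem \ref{thm: model_sing_convex_body}(i) ensures $\phi_P$ has model type singularity. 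The delicate point is that $\tilde{\mu}$ is non-pluripolar: if $E \subset \mathbb{CP}^n$ is pluripolar then $E \cap (\mathbb{C}^*)^n$ lies inside $\{u = -\infty\}$ for some psh function $u$ on the chart $(\mathbb{C}^*)^n$ with $u \not\equiv -\infty$, and pulling back by the holomorphic map $W_x \colon w \mapsto (e^{x_j + i w_j})_j$ produces a psh function on $\mathbb{C}^n$ that is not identically $-\infty$; its restriction to the maximally totally real submanifold $\mathbb{R}^n$ is therefore finite Lebesgue-a.e., which forces $\sigma_x(E \cap T^n_x) = 0$ for every $x$ and hence $\tilde\mu(E) = 0$.

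Next I would apply Theorem \ref{thm: non pluripolar existence} to obtain the unique $\varphi \in \mathcal{E}(\mathbb{CP}^n, r\omega_{FS}, \phi_P)$ with $\sup_{\mathbb{CP}^n} \varphi = 0$ solving $(r\omega_{FS} + i\ddbar \varphi)^n = \tilde\mu$. Since $\tilde\mu$ is $(S^1)^n$-invariant, for every $\theta \in (S^1)^n$ the function $z \mapsto \varphi(\theta \cdot z)$ also solves this equation with supremum equal to $0$, so Theorem \ref{thm: uniqueness} forces $\varphi$ itself to be $(S^1)^n$-invariant. By Proposition \ref{masses real-compact}(ii) together with the isomorphism \eqref{eq: convex_S_1_isom}, the function $h := \tau_P^{-1}(\varphi)$ belongs to $\mathcal{E}_P(\mathbb{R}^n)$. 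Applying \eqref{eq: real_complex_MA_rel} to $\psi := h \circ L = \varphi + \frac{r}{2}\log(1 + \|z\|^2) \in \mathcal{L}_P((\mathbb{C}^*)^n)$ then yields
$$\MA_{\mathbb{R}}(h) = L_{\ast}(i\ddbar \psi)^n = L_{\ast}\bigl(\tilde\mu\vert_{(\mathbb{C}^*)^n}\bigr) = \mu,$$
as required.

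Uniqueness up to an additive constant follows by reversing the correspondence: two solutions $h_1, h_2 \in \mathcal{E}_P(\mathbb{R}^n)$ lift to $\varphi_1, \varphi_2 \in \mathcal{E}(\mathbb{CP}^n, r\omega_{FS}, \phi_P)$ satisfying the same complex Monge-Amp\`ere equation, and Theorem \ref{thm: uniqueness} then gives $\varphi_1 - \varphi_2 \equiv c$, whence $h_1 - h_2 \equiv c$. The main obstacle in this plan is the non-pluripolarity of $\tilde\mu$ when $\mu$ is genuinely singular (e.g.\ a Dirac mass); the essential input there is the classical fact that a maximally totally real torus orbit in $(\mathbb{C}^*)^n$ is not pluripolar.
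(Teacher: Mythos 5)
Your proposal is correct and follows essentially the same route as the paper: lift $\mu$ to the $(S^1)^n$-invariant measure $\tilde\mu$ on $\mathbb{CP}^n$, solve $(r\omega_{FS}+i\ddbar u)^n=\tilde\mu$ in $\mathcal{E}(\mathbb{CP}^n,r\omega_{FS},\phi_P)$ via Theorem \ref{thm: non pluripolar existence}, deduce $(S^1)^n$-invariance from uniqueness, and descend through $\tau_P$ using Proposition \ref{masses real-compact} and \eqref{eq: real_complex_MA_rel}. Your explicit verification that $\tilde\mu$ is non-pluripolar (via the non-pluripolarity of the totally real torus fibers) is a welcome detail that the paper leaves implicit.
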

For conditions on  the measure $\mu$ which guarantee that the solutions $h$ have the same singularity type as $h_P$, we refer to the next remark.

\begin{proof} We can assume that $P \subset r \Sigma$ for some $r>0$. This can always be obtained after a translation and big enough $r>0$. Such a translation will only change the desired solution $h$ by a linear term.

Let $\tilde{\mu}$ be the non-pluripolar measure on $\mathbb{CP}^n $ that is $(S^1)^n$-invariant with $L_* \tilde{\mu}=\mu$. Given this choice, it is clear that 
$$\tilde \mu(\mathbb{CP}^n) = \int_{\mathbb{CP}^n} \left(r\omega_{FS}+ i \partial \bar{\partial} \phi_{P}\right)^n=\int_{\mathbb{R}^n} \MA_{\mathbb{R}}(h_P).$$ 
Now the result follows after an application of Theorem \ref{thm: non pluripolar existence} to $\tilde \mu$ and the model singularity type $[\phi_P]$ (Theorem \ref{thm: model_sing_convex_body}(i)). Indeed, uniqueness guarantees that a solution $u \in \mathcal E(\Bbb{CP}^n,r\omega_{FS},\phi_P)$ to the equation
\begin{equation}\label{eq: CMAE_eq_from_P}
(r\omega_{FS} + i\ddbar u)^n = \tilde \mu
\end{equation}
is $(S^1)^n$-invariant, since so is the data. Proposition \ref{masses real-compact} then immediately gives that $u = \tau_P(h)$, for some $h \in \mathcal E_P (\Bbb R^n)$ that solves  \eqref{real MA} and is unique up to a constant.
 \end{proof}
 
 \begin{remark}\label{rem:BB13 2.23}
 	In \cite[Remark 2.23]{BB13} Berman and Berndtsson ask whether boundedness of the solution to \eqref{real MA} follows from the following integrability condition: 
 \begin{equation}\label{eq: integrability q 1}
\int_{\mathbb{R}^n} |g- h_P|^{n+\delta} d\mu <+\infty,\qquad \forall g \in \mathcal{E}_P(\mathbb{R}^n).
\end{equation}
The answer is yes, and we summarize our reasoning. Via compactness, condition \eqref{eq: integrability q 1} translates to relative pluripotential theory (as explained above) in the following form: there exists a constant $C_0>0$ such that 
 \begin{equation*}
\int_{\mathbb{\mathbb{C}\mathbb{P}}^n} |\varphi- \phi_P|^{n+\delta} d \tilde{\mu} \leq  C_0
\end{equation*}
for all $\varphi  \in \mathcal{E}(\mathbb{C}\mathbb{P}^n, r\omega_{FS}, \phi_P)$ which are $(S^1)^n$-invariant and $\sup_{\mathbb{C} \mathbb{P}^n} (\varphi-\phi_P)=0$. The above estimate then gives a volume-capacity comparison for $(S^1)^n$-invariant Borel sets $E$: 
$$
\tilde{\mu}(E) \leq C_1 \left [ \capacity_{\phi_P}(E) \right ]^{1+\varepsilon}.
$$
 Since both the solution $u \in \mathcal E(\Bbb{CP}^n,r \omega_{FS}, \phi_P)$ (to the equation \eqref{eq: CMAE_eq_from_P}) and the model potential $\phi_P$ are $(S^1)^n$-invariant,  it follows that the sublevel sets $\{u <\phi_P -t\}, t>0$ are also $(S^1)^n$-invariant. 
With this in hand, the proof of Theorem \ref{thm: uniform estimate} carries over (with $a=0$, $d \tilde \mu$ in place of $f \omega^n$, and $\phi_P$ in place of $\chi$) giving the global boundedness of $u-\phi_P$.	
 \end{remark}

\subsection{The Brunn-Minkowski inequality}

The Brunn-Minkowski inequality plays a central role in many branches of analysis and geometry,  especially in the theory of convex bodies. We refer to the beautiful survey of R. Gardner \cite{Ga02} for an extensive account on the subject.

Given two convex bodies $P_1,P_2 \subset \Bbb R^n$ we denote their Minkowski sum by
$$P_1+P_2:=\{p_1+p_2\,:\, p_1\in P_1, p_2\in P_2\}.$$

Minkowski showed that if $P_1, \ldots, P_k, k\leq n$ are convex bodies in $\mathbb{R}^n$ and $t_1, \ldots, t_k\geq 0$, the volume $\Vol_n (t_1P_1+\ldots+t_kP_k)$ is a polynomial of degree $n$ in the variables $t_1, \ldots, t_k$. In the special case $k=n$, the coefficient of  $t_1 t_2\ldots t_n$ in this polynomial is $n!\MV(P_1, \ldots, P_n)$, where $\MV(P_1,...,P_n)$ is the \emph{mixed volume} of $P_1,\ldots,P_n$. Here we choose the factor $n!$ to ensure that $\MV(P,...,P)=\Vol_n(P)$.
\medskip

Lastly, we point out that Theorem B is the complex analog of the celebrated Brunn-Minkowski inequality  (and its variants):

\begin{theorem}\label{thm: BM}
Let $P_1, \cdots, P_n$ be convex bodies in $\mathbb{R}^n$. Then \\
\noindent (i) $\MV(P_1, \cdots, P_n) \geq  \Vol_n(P_1)^{1/n}\ldots \Vol_n(P_n)^{1/n}.$\\
\noindent (ii)$\left(\Vol_n(tP_1+(1-t)P_2)\right)^{\frac{1}{n}} \geq t \Vol_n(P_1)^{\frac{1}{n}}+ (1-t)\Vol_n(P_2)^{\frac{1}{n}}.$
\end{theorem}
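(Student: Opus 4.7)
The strategy is to transport Theorem \ref{thm: BM} to the complex setting via the dictionary $P \mapsto T_P := r\omega_{FS} + i\partial\bar\partial\phi_P$ established in Section \ref{sect: real MA}, and then apply the log-concavity result Theorem \ref{thm: log concavity}. Given convex bodies $P_1, \ldots, P_n \subset \mathbb{R}^n_+$, I would choose $r > 0$ large enough so that $P_j \subset r\Sigma$ for every $j$, and set $T_j := T_{P_j}$. By Theorem \ref{thm: model_sing_convex_body}(i), each $T_j$ is a closed positive current in the big class $\{r\omega_{FS}\}$ with non-vanishing volume, so the log-concavity inequality \eqref{Hodge type} applies to the family $(T_1,\ldots,T_n)$.

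The key identity to establish is
$$
\int_{\mathbb{CP}^n} \langle T_{P_1} \wedge \cdots \wedge T_{P_n}\rangle = \frac{n!}{2^n}\,\MV(P_1, \ldots, P_n),
$$
whose diagonal case ($P_1 = \cdots = P_n = P$) is exactly \eqref{volume compact-real}. The left-hand side coincides with the integral in \eqref{mixed volume compact-real}, since $\mathbb{CP}^n \setminus (\mathbb{C}^*)^n$ is pluripolar and the potentials $H_{P_j}$ are continuous on $(\mathbb{C}^*)^n$, so that the non-pluripolar product agrees there with the Bedford--Taylor product of the $H_{P_j}$. Pushing forward by $L$ and polarizing the identity \eqref{eq: real_complex_MA_rel}, one then reduces to the classical polarization identity $\int_{\mathbb{R}^n} \MA_{\mathbb{R}}(h_{P_1}, \ldots, h_{P_n}) = \frac{n!}{2^n}\MV(P_1, \ldots, P_n)$, obtained by equating the coefficient of $t_1\cdots t_n$ in the two polynomials $\int \MA_{\mathbb{R}}(\sum_j t_j h_{P_j})$ and $\frac{n!}{2^n}\Vol_n(\sum_j t_j P_j)$ via Minkowski's expansion.

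With this identity in hand, part (i) is immediate: applying \eqref{Hodge type} to $T_{P_1}, \ldots, T_{P_n}$ and cancelling the common factor $\frac{n!}{2^n}$ on both sides yields exactly $\MV(P_1, \ldots, P_n) \geq \Vol_n(P_1)^{1/n}\cdots\Vol_n(P_n)^{1/n}$. For part (ii), Minkowski additivity of support functions gives $h_{tP_1+(1-t)P_2} = t h_{P_1}+(1-t)h_{P_2}$, and therefore $\phi_{tP_1+(1-t)P_2} = t\phi_{P_1}+(1-t)\phi_{P_2}$ and $T_{tP_1+(1-t)P_2} = tT_{P_1}+(1-t)T_{P_2}$. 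Applying the concavity of $T \mapsto \bigl(\int_X \langle T^n\rangle\bigr)^{1/n}$ from Theorem \ref{thm: log concavity} to the pair $T_{P_1}, T_{P_2}$ and cancelling the factor $(\frac{n!}{2^n})^{1/n}$ delivers (ii).

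The main substantive step is the polarization identity for the mixed masses, but this is essentially formal bookkeeping once one recognizes the multilinear structure of both sides in the variables $t_j$ defining convex combinations of the $P_j$; no new pluripotential input beyond the content of Section \ref{sect: real MA} is required.
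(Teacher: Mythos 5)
Your proposal is correct and follows essentially the same route as the paper: polarize the identity $\int_{(\mathbb{C}^*)^n}(i\partial\bar\partial H_{\sum_j t_jP_j})^n=\frac{n!}{2^n}\Vol_n(\sum_j t_jP_j)$ to obtain $\int_{\mathbb{CP}^n}\langle T_{P_1}\wedge\cdots\wedge T_{P_n}\rangle=\frac{n!}{2^n}\MV(P_1,\ldots,P_n)$, and then apply Theorem \ref{thm: log concavity}. The only cosmetic difference is in (ii): the paper expands $\Vol_n(tP_1+(1-t)P_2)$ multilinearly and applies (i) term by term, whereas you invoke the concavity statement of Theorem \ref{thm: log concavity} directly via $T_{tP_1+(1-t)P_2}=tT_{P_1}+(1-t)T_{P_2}$ --- the same computation in different packaging.
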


\begin{proof}
To start, after making a translation an choosing a big enough $r>0$ we can assume that $P_j\subset r\Sigma$, for all $j$. Comparing the support functions we deduce that 
$$H_{t_1P_1+\ldots+ t_nP_n}= t_1 H_{P_1}+\ldots + t_n H_{P_n}.$$ 
It thus follows from \eqref{volume compact-real} that
\begin{flalign}
\int_{(\mathbb{C}^*)^n} \left(i \partial \bar{\partial} (t_1 H_{P_1}+\ldots + t_n H_{P_n})\right)^n &= \int_{(\mathbb{C}^*)^n} \left(i \partial \bar{\partial} H_{t_1 P_1 + \ldots + t_n P_n} \right)^n \nonumber \\
&= \frac{n!}{2^n} \textup{Vol}_n(t_1P_1+\ldots+ t_n P_n).
\end{flalign}
Since the first and last expressions are homogeneous polynomials of degree $n$ in the variables $t_1, \ldots, t_n$, by comparing coefficients and using \eqref{mixed volume compact-real} we arrive at (c.f. \cite[Proposition 2.4]{Ba17}):
\begin{flalign*}\label{mixed vol}
\int_{\mathbb{CP}^n} \left(r\omega_{FS}+ i \partial \bar{\partial} \phi_{P_1}\right)\wedge \ldots \wedge \left(r\omega_{FS}+ i \partial \bar{\partial} \phi_{P_n}\right)  &= \int_{(\mathbb{C}^*)^n}i \partial \bar{\partial} H_{P_1}\wedge \ldots \wedge i \partial \bar{\partial} H_{P_n}\\  
&= \frac{n!}{2^n} \,\MV(P_1, \ldots, P_n).
\end{flalign*} 
Putting this together with Theorem \ref{thm: log concavity} and \eqref{volume compact-real},  the inequality of (i) immediately follows.

To argue (ii) one simply expands $\Vol_n(tP_1+(1-t)P_2)$ using multilinearity. Then an application of the inequality of (i) for each resulting term yields the desired conclusion.
\end{proof}

\footnotesize
\let\OLDthebibliography\thebibliography 
\renewcommand\thebibliography[1]{
  \OLDthebibliography{#1}
  \setlength{\parskip}{1pt}
  \setlength{\itemsep}{1pt}
}

\normalsize
 \noindent{\sc University of Maryland}\\
{\tt tdarvas@math.umd.edu}\vspace{0.05in}

\noindent{\sc IHES}\\
{\tt dinezza@ihes.fr} \vspace{0.05in}

\noindent {\sc Universit\'e Paris-Sud}\\
{\tt hoang-chinh.lu@math.u-psud.fr}

\begin{thebibliography}{99}

\bibitem[Au17]{Au17}  H. Auvray, The space of Poincar\'e type K\"ahler metrics on the complement of a divisor. J. Reine Angew. Math. 722 (2017), 1--64.
\bibitem[Ba17]{Ba17} T. Bayraktar, Zero distribution of random sparse polynomials, Michigan Math. J., 66 (2017), no 2, 389--419.

\bibitem[BBL18]{BBL18} T. Bayraktar, T. Bloom,  N. Levenberg, Pluripotential theory and convex bodies, Sbornik Mathematics, 209 (2018), no. 3, 67--101. 

\bibitem[BT76]{BT76} E.~Bedford, B.A.~Taylor, \href{http://projecteuclid.org/euclid.bams/1183537618}{The Dirichlet problem for a complex Monge-Amp\`ere equation},  Invent. Math. {\bf 37}  (1976), no. 1, 1--44. 
 
\bibitem[BT82]{BT82} E.~Bedford, B.A.~Taylor,  \href{http://link.springer.com/article/10.1007/BF02392348}{A new capacity for plurisubharmonic functions},  Acta Math.  {\bf 149}  (1982), 1--40.

\bibitem[BT87]{BT87} E. Bedford,  B.A. Taylor, \href{http://www.sciencedirect.com/science/article/pii/0022123687900875}{Fine topology, Silov boundary, and $(dd^c)^n$}, J. Funct. Anal.  72  (1987),  no. 2, 225--251. 

\bibitem[Be09]{Be09} R.J. Berman, Bergman kernels for weighted polynomials and weighted equilibrium measures of $\mathbb{C}^{n}$, Indiana Univ. Math. J. 58 No. 4 (2009), 1921--1946.

\bibitem[BB13]{BB13}  R.J. Berman, B. Berndtsson, Real Monge-Amp\`ere equations and K\"ahler-Ricci solitons on toric log Fano varieties. Ann. Fac. Sci. Toulouse Math. (6) 22 (2013), no. 4, 649--711. 


 \bibitem[BBGZ13]{BBGZ13} R.J.~Berman, S.~Boucksom, V.~ Guedj, A.~Zeriahi, A variational approach to complex Monge-Amp\`ere equations, Publ. Math. I.H.E.S. 117 (2013), 179--245. 

\bibitem[Bl97]{Blocki} Z. B{\l}ocki, The complex Monge-Amp\`ere operator in pluripotential theory, lecture notes. Available on the author's webpage: \href{http://gamma.im.uj.edu.pl/~blocki/publ/ln/wykl.pdf}{http://gamma.im.uj.edu.pl/\~{}blocki/publ/ln/wykl.pdf}.


\bibitem[Bo02]{Bo02} S. Boucksom, On the volume of a line bundle, Internat. J. Math. 13 (2002), no. 10, 1043--1063.

\bibitem[Bo04]{Bo04} S.~Boucksom. {Divisorial Zariski decompositions on compact complex manifolds}, Ann. Sci. \'Ecole Norm. Sup., 37 (2004), 45--76.


\bibitem[BFJ09]{BFJ09} S. Boucksom, C. Favre, M. Jonsson,
Differentiability of volumes of divisors and a problem of Teissier.  J. Algebraic Geom. 18 (2009), no. 2, 279--308. 

\bibitem[BEGZ10]{BEGZ10}
S. Boucksom, P. Eyssidieux, V. Guedj, and A. Zeriahi,
{Monge-{A}mp\`ere equations in big cohomology classes}, Acta Math.
  \textbf{205} (2010), no.~2, 199--262, MR 2746347.
\bibitem[Ce98]{Ce98} U. Cegrell, Pluricomplex energy, Acta Math. 180 (1998), 187--217.
\bibitem[DDL1]{DDL1}
T. Darvas, E. Di~Nezza, and C.~H. Lu, {On the singularity
  type of full mass currents in big cohomology classes}, 
  Compositio Mathematica, Volume 154, Issue 2, pp. 380--409 (2018).
  
  \bibitem[DDL2]{DDL2}
T. Darvas, E. Di~Nezza, and C.~H. Lu, {{Monotonicity of
  non-pluripolar products and complex Monge-Amp\`ere equations with prescribed
  singularity}},  Analysis \& PDE 11-8 (2018), 2049--2087. 
  

\bibitem[Dem92]{Dem92} J.P.~Demailly, {Regularization of closed positive currents and intersection theory}, J. Algebraic Geom.  {\bf 1}  (1992),  no. 3, 361--409.


\bibitem[De93]{De93} J.P. Demailly, A numerical criterion for very ample line bundles, J. Differential Geom. 37 (1993), 323--374.

\bibitem[DNL15]{DL15}
E. Di~Nezza and C.~H. Lu, {Generalized {M}onge-{A}mp\`ere
  capacities}, Int. Math. Res. Not. IMRN (2015), no.~16, 7287--7322.

\bibitem[DNL17]{DL17}
E. Di~Nezza and C.~H. Lu, {Complex {M}onge-{A}mp\`ere equations  on quasi-projective varieties}, J. Reine Angew. Math. \textbf{727} (2017), 145--167.

\bibitem[DL18]{DL18} T. Darvas, C.H. Lu, Geodesic stability, the space of rays, and uniform convexity in Mabuchi geometry,  arXiv:1810.04661.
 
 \bibitem[Dw09a]{Diw09a} S. Dinew, An inequality for mixed Monge-Amp\`ere measures. Math. Z. {\bf 262} (2009), 1--15.
 
  
\bibitem[Dw09b]{Dw09} S. Dinew, {Uniqueness in $\Ec(X,\omega)$}, J. Funct. Anal. 256 (2009), no. 7, 2113--2122.
\bibitem[EGZ09]{Eyssidieux_Guedj_Zeriahi_JAMS_2009}
P.  Eyssidieux, V. Guedj, and A. Zeriahi, {Singular
  {K}\"ahler-{E}instein metrics}, J. Amer. Math. Soc. \textbf{22} (2009),
  no.~3, 607--639. 

\bibitem[Ga02]{Ga02} R.J. Gardner, The Brunn-Minkowski inequality, Bull. Amer. Math. Soc. (N.S.) 39 (3) (2002) 355--405.
\bibitem[GLZ17]{GLZ17} V. Guedj, C.~H. Lu, and A. Zeriahi, Plurisubharmonic
  envelopes and supersolutions, J. Differential Geometry, {\bf 113} (2019) 273--313. 

\bibitem[GZ05]{GZ05}
V. Guedj and A. Zeriahi,  Intrinsic capacities on compact
  {K}{\"a}hler manifolds, Journal of Geometric Analysis \textbf{15} (2005),
  no.~4, 607--639.

\bibitem[GZ07]{GZ07}
V. Guedj and A. Zeriahi, The weighted Monge--Amp{\`e}re energy  of quasiplurisubharmonic functions, Journal of Functional Analysis \textbf{250} (2007), no.~2, 442--482.
  
 \bibitem [GZ17]{GZ17} V.~Guedj, A.~Zeriahi, \href{http://www.ems-ph.org/books/book.php?proj_nr=210}{Degenerate Complex Monge-Amp\`ere Equations}, EMS Tracts in Mathematics, vol.{\bf  26}, (2017).
 
\bibitem[Ko98]{Ko98} S. Ko{\l}odziej, The complex Monge-Amp\`ere equation, Acta Math. 180 (1998), no. 1, 69--117.
 
\bibitem[La04]{La04} R. Lazarsfeld, Positivity in algebraic geometry I, A Series of Modern Surveys in Mathematics, 48. Springer-Verlag, Berlin, 2004.

\bibitem[PS14]{PS14} D.H. Phong, J. Sturm, On the singularities of the pluricomplex Green's function. Advances in analysis: the legacy of Elias M. Stein, 419--435, Princeton Math. Ser., 50, Princeton Univ. Press, Princeton, NJ, 2014. \href{https://arxiv.org/abs/1209.2198}{arXiv:1209.2198}.

\bibitem[RT69]{RT69} J. Rauch, B.A. Taylor, The Dirichlet problem for the multidimensional Monge-Amp\`ere equation. 
Rocky Mountain J. Math. 7 (1977), no. 2, 345--364. 

\bibitem[RWN14]{RWN14} J. Ross, D.Witt Nystr\"om,  \href{https://projecteuclid.org/euclid.jsg/1409317366}{Analytic test configurations and geodesic ray}, J. Symplectic Geom. Volume 12, Number 1 (2014), 125--169.

\bibitem[Ta11]{Ta11} M.E. Taylor, Partial differential equations III. Nonlinear equations. 
Second edition. Applied Mathematical Sciences, 117. Springer, New York, 2011. xxii+715 pp.

\bibitem[TY87]{TY87} G. Tian, S.T. Yau, Existence of K\"ahler-Einstein metrics on complete K\"ahler manifolds and their applications to algebraic geometry. Mathematical aspects of string theory (San Diego, Calif., 1986), 574--628, Adv. Ser. Math. Phys., 1, World Sci. Publishing, Singapore,
1987.
\bibitem[TY90]{TY90} G. Tian, S.T. Yau, Complete K\"ahler manifolds with zero Ricci curvature. I. J. Amer. Math. Soc. 3 (1990), no. 3, 579--609.

\bibitem[Ya78]{Ya78} S.T. Yau, On the Ricci curvature of a compact K\"ahler manifold and the complex Monge-Amp\`ere equation. I, Comm. Pure Appl. Math. 31 (1978), no. 3, 339--411.

\bibitem[WN17]{WN17}
D. Witt~Nystr{\"o}m, Monotonicity of non-pluripolar Monge-Amp\`ere masses, Indiana University Mathematics Journal, \textbf{68} (2019), no.~2, 579--591.
\end{thebibliography}
\end{document}